\documentclass[11pt]{amsart}
\usepackage[letterpaper,margin=1.1in]{geometry}
\usepackage{amsmath}
\usepackage{amsthm}
\usepackage{amssymb}
\usepackage{amsfonts}
\usepackage{graphicx}
\usepackage{mathrsfs,bm,amscd}
\usepackage{latexsym}
\usepackage{color}
\usepackage{marginnote}
\usepackage{fourier}

\usepackage[all]{xy}

\numberwithin{equation}{section}
%\numberwithin{equation}{section}
\theoremstyle{plain}
\newtheorem{thm}{Theorem}[section]
\newtheorem{prop}[thm]{Proposition}
\newtheorem{cor}[thm]{Corollary}
\newtheorem{lem}[thm]{Lemma}

\theoremstyle{definition}
\newtheorem{defn}{Definition}[section]

\theoremstyle{remark}
\newtheorem{rem}[thm]{Remark}
\newtheorem{ex}[thm]{Example}
%\usetikzlibrary{matrix,arrows}

%\newtheorem{note}{Note}[section]

\newtheorem*{bprob*}{Bonus problem}

\def\R{\mathbb{R}}
\def\Z{\mathbb{Z}}
\def\N{\mathbb{N}}

\def\S{\mathbb{S}}

\def\H{\mathbb{H}}
\def\e{\epsilon}
\def\g{\gamma}
\def\d{\delta}

\DeclareMathOperator{\diam}{diam}
\DeclareMathOperator{\Lip}{Lip}

\DeclareMathOperator{\dist}{dist}

\newcommand{\bH}{\mathbb{H}}

\begin{document}

\title[Bi-Lipschitz embeddings of Heisenberg submanifolds]{Bi-Lipschitz embeddings of Heisenberg submanifolds into Euclidean spaces}

\begin{abstract} The Heisenberg group $\bH$ equipped with a sub-Riemannian metric is one of the most well known examples of a doubling metric space which does not admit a bi-Lipschitz embedding into any Euclidean space. In this paper we investigate which \textit{subsets} of $\bH$ bi-Lipschitz embed into Euclidean spaces. We show that there exists a universal constant $L>0$ such that lines $L$-bi-Lipschitz embed into $\R^3$ and planes $L$-bi-Lipschitz embed into $\R^4$.
Moreover, $C^{1,1}$ $2$-manifolds without characteristic points as well as all $C^{1,1}$ $1$-manifolds locally $L$-bi-Lipschitz embed into $\R^4$ where the constant $L$ is again universal. We also consider several examples of compact surfaces with characteristic points and we prove, for example, that Kor\'{a}nyi spheres bi-Lipschitz embed into  $\R^4$ with a uniform constant. Finally, we show that there exists a compact, porous subset of $\bH$ which does not admit a bi-Lipschitz embedding into any Euclidean space.

\end{abstract}

\author{Vasileios Chousionis \and Sean Li \and Vyron Vellis \and Scott Zimmerman}
\thanks{V.C. is supported by  the Simons Collaboration grant no.\  521845. S.L. is supported by the NSF grant DMS-1600804. V.V. is supported by the NSF grant DMS-1800731.}
\date{\today}
\subjclass[2010]{Primary 30L05; Secondary 53C17}
\keywords{Heisenberg group, bi-Lipschitz embedding, porous sets}

\address{Department of Mathematics\\ University of Connecticut\\ Storrs, CT 06269-1009}
\email{vasileios.chousionis@uconn.edu}
\address{Department of Mathematics\\ University of Connecticut\\ Storrs, CT 06269-1009}
\email{sean.li@uconn.edu}
\address{Department of Mathematics\\ University of Connecticut\\ Storrs, CT 06269-1009}
\email{vyron.vellis@uconn.edu}
\address{Department of Mathematics\\ University of Connecticut\\ Storrs, CT 06269-1009}
\email{scott.zimmerman@uconn.edu}

\maketitle

\section{Introduction}

Given two metric spaces $(X,d_X)$ and $(Y,d_Y)$, we say that a mapping $f:X\to Y$ is $L$\emph{-bi-Lipschitz} for $L\geq 1$ if 
$$
L^{-1} d_X(a,b) \leq d_Y(f(a),f(b)) \leq Ld_X(a,b) \quad \text{for any } a,b \in X.
$$
In this case we say that $X$ \emph{bi-Lipschitz embeds in $Y$} or that $X$ \emph{admits a bi-Lipschitz embedding into} $Y$. For two metric spaces $X$ and $Y$, we define the quantity
$$
\Lip(X,Y) := \inf \{L : \text{ there exists $L$-bi-Lipschitz } f : X \to Y\} \in [1,\infty]
$$
so that $X$ admits a bi-Lipschitz embedding into $Y$ if and only if $\Lip(X,Y) < \infty$.

When does a metric space admit a bi-Lipschitz embedding into some (finite-dimensional) Euclidean space? Spaces admitting such embeddings can roughly be thought to live inside a Euclidean space, and this is very important for a comprehensive study of their geometry. The embedding problem has attracted considerable attention over the years due to its applications in theoretical computer science and, more specifically, in graphic imaging and storage and access issues for large data sets \cite{KV, Naor}.

It is well know that a metric space bi-Lipschitz embeds into some Euclidean space only if it is doubling. Recall that a space is \emph{doubling} if every ball of radius $r$ can be covered by at most $N$ balls of radii $r/2$ for some fixed $N>1$.  Moreover Assouad \cite{As1, As2, As3} showed that, if a metric space $(X,d)$ is doubling, then, for any $\e > 0$, the snowflaked space $(X,d^{\e})$ bi-Lipschitz embeds into some Euclidean space. Nevertheless, the doubling condition is not sufficient for the existence of a bi-Lipschitz embedding into a Euclidean space. The Heisenberg group endowed with a sub-Riemannian metric is probably the most well known example of a doubling space which does not admit bi-Lipschitz embedding into any Euclidean space. This follows from a  deep theorem of Pansu \cite{Pansu} and an observation of Semmes \cite{Semmes3}. In fact, the second author showed that $\bH$ does not bi-Lipschitz embed in any Hilbert space \cite{li}. We also record that bi-Lipschitz embeddability of sub-Riemannian manifolds, and especially of Carnot groups, has been studied by various authors \cite{Semmes3,Wu,RV,Wu2,Romney}.

%For related results see Semmes \cite{Semmes99}.

%A necessary and sufficient condition for global bi-Lipschitz embedabbility in terms of local embedabbility was given by  Seo \cite{Seo}. Other sufficient conditions have been given by Eriksson-Bique \cite{E-B} in terms of bounds of sectional curvature and by Romney \cite{Romney} in terms of bounds of Alexandroff curvature.(\textcolor{red}{Why do we have this  paragraph?})

%The bi-Lipschitz embedabbility of sub-Riemannian manifolds has been studied by various authors \cite{Semmes3,Wu,RV,Wu2,Romney}. Carnot groups comprise a large class of canonical sub-Riemannian manifolds. The \emph{Heisenberg group} $\mathbb{H}$ is the simplest Carnot group that is not Euclidean; see \textsection\ref{sec:prelim} for definitions. Yet, it follows from a  deep theorem of Pansu \cite{Pansu} and an observation of Semmes \cite{Semmes3} that $\mathbb{H}$ does not embed in a bi-Lipschitz way into any Euclidean space. In fact, the second author showed that $\bH$ does not bi-Lipschitz embed in any Hilbert space \cite{li}.

Here, we are concerned with the following question: which subsets of $\mathbb{H}$ admit a bi-Lipschitz embedding into some Euclidean space? Clearly, every finite subset of the Heisenberg group admits such an embedding. On the other hand, following the arguments of Pansu and Semmes, it is easy to see that no open subset of the Heisenberg group admits such a bi-Lipschitz embedding. We are particularly interested in the bi-Lipschitz embeddability of submanifolds of $\bH$, which have been studied extensively in the past 20 years in connection to geometric measure theory and geometric analysis, see e.g. \cite{sc,CDPT}.

Our first theorem focuses on affine subsets of $\R^3$.
Here and in what follows, $d_K$ is the Kor\'{a}nyi metric defined in Section~\ref{sec:prelim}.

\begin{thm}\label{thm:line-plane}
There exists a universal $L>1$ with the following properties.
\begin{enumerate}
\item If $\ell \subset \R^3$ is a line, then $\Lip((\ell, d_K),\R^3) \leq L$.
\item If $P \subset \R^3$ is a plane, then $\Lip((P, d_K),\R^4) \leq L$.
\end{enumerate}
\end{thm}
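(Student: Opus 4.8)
The plan is to compute $d_K$ explicitly along an arbitrary line and an arbitrary plane, to observe that up to isometry and rescaling only a handful of model metric spaces occur, and then to embed each model by pairing a Euclidean coordinate with a bi-Lipschitz embedding of a square-root snowflake. The one external ingredient I would invoke is the classical fact, provable by a von Koch-type construction, that $(\R,|\cdot|^{1/2})$ admits a bi-Lipschitz embedding $\Phi$ into $\R^2$ with an absolute constant; a variant of the same construction gives a bi-Lipschitz embedding of the snowflaked circle $(\R/\pi\Z,\dist^{1/2})$ into $\R^2$ as well.

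For (1), write $\ell=\{p_0+sv:s\in\R\}$. Using left-invariance of $d_K$, a direct computation shows that the third coordinate of $(p_0+sv)^{-1}(p_0+s'v)$ is linear in $s'-s$, whence
\[
d_K(p_0+sv,\,p_0+s'v)^4 \;\asymp\; \rho^4(s-s')^4 + \beta^2(s-s')^2,
\]
where $\rho=|(v_1,v_2)|$ and $\beta=v_3-\tfrac12(p_{0,1}v_2-p_{0,2}v_1)$; here $\beta=0$ precisely when $\ell$ is a horizontal line and $\rho=0$ precisely when $\ell$ is vertical. Thus $(\ell,d_K)$ is isometric to $(\R,f)$ with $f(h)\asymp\rho|h|+\sqrt{|\beta|}\,|h|^{1/2}$, absolute constants, and the map $s\mapsto(\rho s,\ \sqrt{|\beta|}\,\Phi(s))\in\R\times\R^2=\R^3$ is bi-Lipschitz with an absolute constant, proving (1).

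For (2) I would first reduce to model planes. If $P$ is non-vertical it is a graph $t=ax+by+c$; the affine change of variables $\hat x=x-2b$, $\hat y=y+2a$, $\hat z=\hat x+i\hat y$ on the parametrizing $\R^2$ turns the third coordinate of $p^{-1}p'$ into $-\tfrac12\operatorname{Im}(\overline{\hat z}\hat z')$, so $d_K(p,p')^4\asymp|\hat z-\hat z'|^4+|\operatorname{Im}(\overline{\hat z}\hat z')|^2$ \emph{independently of $a,b,c$}; hence every non-vertical plane is isometric to the single model $(\R^2,d_0)$, $d_0(z,z')\asymp|z-z'|+|\operatorname{Im}(\bar z z')|^{1/2}$, where $|\operatorname{Im}(\bar z z')|=|z||z'||\sin(\arg z-\arg z')|\asymp|z||z'|\dist(\arg z-\arg z',\pi\Z)$. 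If $P$ is vertical, a rotation in the $xy$-plane together with an affine reparametrization identifies $(P,d_K)$ with $(\R^2,D)$, $D(p,p')^4\asymp(\Delta s)^4+(\Delta t)^2$, i.e. $D\asymp|\Delta s|+|\Delta t|^{1/2}$, which embeds into $\R\times\R^2=\R^3\subset\R^4$ via $(s,t)\mapsto(s,\Phi(t))$. It remains to embed $(\R^2,d_0)$ into $\R^4$. Since $\operatorname{Im}(\bar z z')$ depends on $\arg z-\arg z'$ only modulo $\pi$, I would take a bi-Lipschitz embedding $\Psi$ of $(\R/\pi\Z,\dist^{1/2})$ into an annulus $\{1\le|w|\le2\}\subset\R^2$ and set $G(z)=|z|\,\Psi(\arg z)$, $F(z)=(z,G(z))\in\R^2\times\R^2=\R^4$. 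From $|\Psi(\phi)-\Psi(\phi')|^2\asymp\dist(\phi-\phi',\pi\Z)\asymp|\sin(\phi-\phi')|$ and $\bigl||z|-|z'|\bigr|\le|z-z'|$, the upper bound $|F(z)-F(z')|^2\lesssim|z-z'|^2+|\operatorname{Im}(\bar z z')|\asymp d_0(z,z')^2$ is immediate. The lower bound I would prove by a trichotomy: (i) if $|z|$ and $|z'|$ are far apart then already $|z-z'|\asymp\max(|z|,|z'|)\asymp d_0(z,z')$; (ii) if $|z|\asymp|z'|$ with small angular difference, then $|G(z)-G(z')|$ recovers the twist term $\bigl(|z||z'|\dist(\arg z-\arg z',\pi\Z)\bigr)^{1/2}$ modulo the admissible error $\bigl||z|-|z'|\bigr|\le|z-z'|$; (iii) if $|z|\asymp|z'|$ with angular difference bounded below, then $\cos(\arg z-\arg z')$ is bounded away from $1$, forcing $|z-z'|\asymp|z|\asymp d_0(z,z')$. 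Assembling the cases gives $|F(z)-F(z')|\asymp d_0(z,z')$ with an absolute constant, proving (2).

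The hard part will be the lower bound in the last step: the auxiliary map $G$ is badly non-injective (in particular $G(-z)=G(z)$), so one must show that the Euclidean coordinate $z$ restores exactly the separation that $G$ loses, and it is this that dictates both the trichotomy and the decision to let $\Psi$ take values in a full annulus of $\R^2$ (landing on a genuine circle is impossible for dimension reasons) while keeping $|\Psi|$ bounded above and below. A more routine but still delicate point is the coordinate bookkeeping in the reduction that collapses every non-vertical plane onto the single model $(\R^2,d_0)$.
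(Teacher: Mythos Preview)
Your overall strategy---reduce to model metric spaces and then pair a Euclidean coordinate with a snowflake embedding---is exactly right, and your reductions (the computation of $d_K$ along a line, the translation trick collapsing every non-vertical plane onto $\{z=0\}$, the trichotomy for the lower bound) are essentially those of the paper. However, the proof rests on one claimed ``external ingredient'' that is false, and this wrecks the target dimensions.

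You assert that $(\R,|\cdot|^{1/2})$ bi-Lipschitz embeds into $\R^2$ via a von~Koch-type construction, and similarly that $(\R/\pi\Z,\dist^{1/2})$ embeds into an annulus in $\R^2$. This is not true: the snowflaked line has Hausdorff and Assouad dimension $2$, and any bi-Lipschitz image in $\R^2$ would be an Ahlfors $2$-regular Jordan arc of positive Lebesgue measure; but such an arc is a quasiarc, hence (via a Beurling--Ahlfors/Tukia--V\"ais\"al\"a quasiconformal extension) is the image of a segment under a planar quasiconformal map and therefore has measure zero. The von~Koch construction in the plane only reaches exponents $\alpha>1/2$; at $\alpha=1/2$ the curve would have to be space-filling and cannot stay injective. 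The correct minimal target for $\Phi$ (and for the snowflaked circle $\phi$) is $\R^3$, which is what the paper uses.

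With $\Phi:\R\to\R^3$, your map $s\mapsto(\rho s,\sqrt{|\beta|}\,\Phi(s))$ lands in $\R^4$, not $\R^3$, so part~(1) falls short. The paper recovers the missing dimension by a genuinely different construction in the mixed case $\rho,\beta\neq 0$: it uses a map of the form
\[
F(w(t))=\tfrac{\kappa}{\mu}\,\Phi\bigl(\mu t-[\mu t]\bigr)+\tfrac{\kappa}{\mu}\bigl(0,0,[\mu t]\bigr),\qquad \mu=\kappa^2/\lambda^2,
\]
which periodically stacks copies of $\Phi([0,1])\subset[0,1]^3$ along the third axis. This exploits that the snowflake term $\lambda|h|^{1/2}$ dominates only at scales $|h|\lesssim\mu^{-1}$, after which the linear term takes over, so the same three coordinates can serve both regimes.

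For the non-vertical plane, your map $F(z)=(z,\,|z|\Psi(\arg z\bmod\pi))$ with $\Psi$ into $\R^3$ lands in $\R^5$. Here the paper saves a dimension by dropping your full $z$-coordinate in favor of only the radius, and by using the \emph{full} circle rather than $\R/\pi\Z$: setting $F(ts,0)=(t\phi(s),t)\in\R^3\times\R$ with $\phi:(\S^1,|\cdot|^{1/2})\to\R^3$ bi-Lipschitz. Since $\phi$ distinguishes $s$ from $-s$, the two-to-one ambiguity that forced you to keep both real coordinates of $z$ disappears, and a single radial coordinate suffices. Your observation that $\Psi$ cannot land on a round circle is the same obstruction that forces $\phi$ to use all of $\R^3$; the extra coordinate $t$ is then needed precisely because $|\phi|$ is not constant.

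In short: the computations and case analyses are fine, but the claimed planar snowflake embeddings do not exist, and without them your maps miss the stated dimensions by one. The paper's two dimension-saving devices---the periodicized snowflake for lines and the radial-plus-full-circle cone construction for non-vertical planes---are the content you are missing.
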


We note that the Euclidean dimensions in the preceding theorem are sharp. Indeed, the $z$-axis in the Heisenberg group is bi-Lipschitz homeomorphic to the snowflaked space $(\R,|\cdot|^{1/2})$ which cannot be embedded in $\R^2$. On the other hand, the plane $\{x=0\}$ is bi-Lipschitz homeomorphic to the product space $(\R,|\cdot|)\times(\R,|\cdot|^{1/2})$ which can not be embedded in $\R^3$.

Using a bi-Lipschitz welding theorem \cite{Makarychev}, we obtain the following result as a corollary.

\begin{thm}\label{thm:complex}
If $M$ is a compact piecewise linear $2$-manifold in $\R^3$, then there exists $N\in\N$ such that $\Lip((M,d_K),\R^N)<\infty$.
\end{thm}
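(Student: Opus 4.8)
The plan is to cut $M$ into finitely many flat pieces, embed each piece into a fixed Euclidean space using Theorem~\ref{thm:line-plane}, and then splice the embeddings together one piece at a time by means of the welding theorem of \cite{Makarychev}.

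First I would triangulate the compact piecewise linear $2$-manifold $M\subset\R^3$ into finitely many closed $2$-simplices $T_1,\dots,T_k$, so that $M=\bigcup_{i=1}^{k}T_i$ and each $T_i$ lies in an affine plane $P_i\subset\R^3$. Since $(T_i,d_K)$ is an isometric subspace of $(P_i,d_K)$, part (2) of Theorem~\ref{thm:line-plane} gives, for each $i$, an $L$-bi-Lipschitz embedding of $(T_i,d_K)$ into $\R^4$; only the finiteness of the distortion, not its uniformity, will be needed.

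Next I would show by induction on $j$ that $M_j:=T_1\cup\dots\cup T_j$, regarded as a subspace of $(\R^3,d_K)$, bi-Lipschitz embeds into some finite-dimensional Euclidean space $\R^{N_j}$. The base case $j=1$ is the previous paragraph. For the inductive step, write $M_{j+1}=M_j\cup T_{j+1}$ as a union of two closed subsets; by the inductive hypothesis $M_j$ bi-Lipschitz embeds into $\R^{N_j}$, and by the first step $T_{j+1}$ bi-Lipschitz embeds into $\R^4$. The bi-Lipschitz welding theorem of \cite{Makarychev} asserts that if a metric space is the union of two subsets, each of which bi-Lipschitz embeds into a Euclidean space, then the whole space bi-Lipschitz embeds into a Euclidean space whose dimension and distortion are bounded in terms of those of the two pieces. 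Applying it to $M_{j+1}=M_j\cup T_{j+1}$ yields a bi-Lipschitz embedding of $M_{j+1}$ into some $\R^{N_{j+1}}$. After $k$ steps this produces $\Lip((M,d_K),\R^{N})<\infty$ with $N=N_k$, as asserted.

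The essential ingredients are Theorem~\ref{thm:line-plane} and the welding theorem of \cite{Makarychev}, and everything else is a finite induction over the simplices of a triangulation, so I do not anticipate a serious obstacle. The one point that genuinely matters is that the target dimension stay finite throughout the induction; this relies on the welding theorem being applied in a quantitative form whose output Euclidean dimension is controlled by the dimensions of the two input pieces, after which finiteness of $N$ follows because the gluing is carried out only $k<\infty$ times. I would also check that the version of the welding theorem used permits the two glued subsets to intersect in a lower-dimensional set, such as an edge or a vertex of the triangulation, as is the case here.
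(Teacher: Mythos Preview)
Your proposal is correct and is essentially identical to the paper's own argument: the paper also decomposes $M$ into finitely many simplices, embeds each one into $\R^4$ via Proposition~\ref{thm:planes} (i.e.\ Theorem~\ref{thm:line-plane}(2)), and then applies the Makarychev welding lemma (Lemma~\ref{lem:welding}) iteratively, obtaining an embedding into $\R^{5n-1}$ after $n$ applications. Your two closing worries are non-issues: the welding lemma as stated in the paper takes $X=X_1\cup X_2$ with $X_1,X_2$ closed and outputs an embedding into $\R^{n+m+1}$, so the target dimension grows by exactly $5$ at each step and there is no hypothesis on the intersection $X_1\cap X_2$.
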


It is natural now to ask if Theorem \ref{thm:complex} holds if ``piecewise linear'' is replaced by some degree of differentiability. Our next theorem asserts that \textit{any} $C^{1,1}$ $1$-manifold can be locally bi-Lipschitz embedded in $\R^4$ with a uniform constant. Moreover we prove that the same holds true for $C^{1,1}$ $2$-manifolds around non-characteristic points. %It turns out that the situation now is more complicated and it largely  depends on the (Heisenberg) smooth structure of the manifold around each point.
Recall that a point $p$ on a differentiable manifold in $\mathbb{R}^3$ is called \emph{characteristic} if the horizontal distribution at that point is the same as the tangent space. Otherwise, the point is called \emph{$\mathbb{H}$-regular}. By well known results of Balogh \cite{Balogh}, almost all points of a $C^{1,1}$ smooth $2$-manifold in $\mathbb{R}^3$ are $\bH$-regular. A $2$-manifold is called $\mathbb{H}$-regular if all of its points are $\mathbb{H}$-regular.

\begin{thm}\label{thm:curve-surface}
There exists universal $L>1$ with the following properties.
\begin{enumerate}
\item If $\g\subset\R^3$ is a $C^{1,1}$ $1$-manifold and $x\in\gamma$, then there exists a neighborhood $U \subset \gamma$ of $x$ such that $\Lip((U,d_K),\R^4)\leq L$.
\item If $M\subset \R^3$ is a $C^{1,1}$ $2$-manifold and $x\in M$ is $\bH$-regular, then there exists a neighborhood $U \subset M$ of $x$ such that $\Lip((U,d_K),\R^4)\leq L$.
\end{enumerate}
\end{thm}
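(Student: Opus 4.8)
The plan is to reduce both statements to Theorem~\ref{thm:line-plane}(2): I will show that a small neighborhood of $x$ in the submanifold is, with a universal constant, bi-Lipschitz equivalent to a subset of the vertical plane $P_0:=\{y=0\}\subset\bH$, and then embed $P_0$ into $\R^4$ using that theorem. Left translations, rotations about the $t$-axis, the reflection $(x,y,t)\mapsto(-x,y,-t)$, and Heisenberg dilations all act on $(\bH,d_K)$ as isometries or similarities, so I may freely pre-compose with them; they affect only the size of the neighborhood I obtain, which is allowed since the theorem asserts a universal constant but not a universal neighborhood.

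First, the normalization. After a left translation assume $x=0$. In the surface case, $\bH$-regularity gives $T_0M\neq\{t=0\}$, so $T_0M$ is transverse to the $x$-axis or the $y$-axis; after a rotation assume it is transverse to the $y$-axis and that $T_0M\cap\{t=0\}$ is the $x$-axis, so that near $0$ one can write $M=\{(x,f(x,t),t)\}$ with $f\in C^{1,1}$, $f(0,0)=0$ and $f_x(0,0)=0$. If $f_t(0,0)\neq0$, I apply the reflection above if necessary and then the dilation of ratio $f_t(0,0)$, so that $f_t(0,0)=1$. In the curve case, parametrize $\gamma$ near $0$ by the coordinate axis to which its unit tangent is closest; after a rotation this gives either $\gamma=\{(x,\phi(x),\psi(x))\}$ with $\phi,\psi\in C^{1,1}$ vanishing at $0$ and $|\phi'(0)|,|\psi'(0)|\leq1$, or $\gamma=\{(a(t),b(t),t)\}$ with $a,b\in C^{1,1}$ vanishing at $0$ and $|a'(0)|,|b'(0)|\leq1$.

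Next, the main construction. On the submanifold define the vertical drift $\sigma$ by integrating the contact form $dt-\tfrac12(x\,dy-y\,dx)$ along the submanifold itself (curve case) or along the reference path that first moves in $t$ and then in $x$ (surface case); explicitly, in the surface case $\sigma(x,t)=t-\tfrac12 x f(x,t)+\int_0^x f(\xi,t)\,d\xi$, with an analogous formula for curves. Writing $\pi_x$ for the first coordinate and $\pi_\bH$ for the horizontal projection $(x,y,t)\mapsto(x,y)$, set $\Theta(p):=(\pi_x(p),0,\sigma(p))\in P_0$. Since the area term in the Korányi group law vanishes for points of $P_0$, one has $d_K(\Theta(p),\Theta(q))\asymp|\pi_x(p)-\pi_x(q)|+|\sigma(p)-\sigma(q)|^{1/2}$ immediately, so the crux is the estimate
$$
d_K(p,q)\;\asymp\;|\pi_x(p)-\pi_x(q)|+|\sigma(p)-\sigma(q)|^{1/2}
$$
for $p,q$ in a sufficiently small neighborhood of $0$ in the submanifold, with a universal implicit constant. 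Since $d_K(p,q)$ is comparable to $|\pi_\bH(q^{-1}p)|+|(q^{-1}p)_3|^{1/2}$, this reduces to showing: (i) the third coordinate $(q^{-1}p)_3$ agrees with $\sigma(p)-\sigma(q)$ up to a trapezoidal-rule remainder, which the $C^{1,1}$ bound controls by a universal multiple of $|\pi_x(p)-\pi_x(q)|^3$ and hence is negligible; and (ii) $|\pi_\bH(q^{-1}p)|$ is at least $|\pi_x(p)-\pi_x(q)|$ and at most a universal multiple of $|\pi_x(p)-\pi_x(q)|+|\sigma(p)-\sigma(q)|^{1/2}$, the latter thanks to the normalization of $f$ (resp.\ of the tangent direction). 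Granting the crux, $\Theta$ is a bi-Lipschitz bijection, with universal constant, onto its image in $P_0$; composing with the $\R^4$-embedding of $(P_0,d_K)$ from Theorem~\ref{thm:line-plane}(2) and undoing the isometries and dilation proves both parts. (When the tangent of $\gamma$ is closest to the $t$-axis, $\sigma(p)\asymp t$ and $\Theta(\gamma)$ lies near the $t$-axis, so $d_K(p,q)\asymp|t-t'|^{1/2}$; here one could even land in $\R^3$ via Theorem~\ref{thm:line-plane}(1).)

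I expect the crux estimate to be the main obstacle, and within it the point of keeping the constants universal. The trapezoidal-remainder bound for $C^{1,1}$ functions is one ingredient, but the subtler issue is part (ii): the error there is only absorbed into $|\pi_x(p)-\pi_x(q)|+|\sigma(p)-\sigma(q)|^{1/2}$ because of the preliminary normalization — the dilation forcing $f_t(0,0)=1$ in the non-vertical surface case, and the choice of parametrizing axis forcing $|\phi'(0)|\leq1$ (resp.\ $|a'(0)|,|b'(0)|\leq1$) for curves. Without these, the comparison still holds but with a constant measuring how far $T_0M$ (resp.\ the tangent line) is from being vertical, which would destroy the universality of $L$.
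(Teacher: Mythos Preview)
Your treatment of part (1) is essentially correct and is a nice alternative to the paper's argument. For a curve $\gamma=\{(x,\varphi(x),\psi(x))\}$ your $\sigma$ is obtained by integrating the contact form along $\gamma$ itself, and the difference $\sigma(p)-\sigma(q)-(q^{-1}p)_3$ really is the trapezoidal remainder $\int_{x'}^{x}\varphi-\tfrac12(x-x')(\varphi(x)+\varphi(x'))=O(\Lip(\varphi')|x-x'|^3)$, which is absorbed by $|x-x'|^2$ on a small neighborhood. The paper instead splits the curve into ``vertical'' pieces (handled by a snowflake embedding, Lemma~\ref{lem:snow}) and ``horizontal'' pieces (handled by passing to the flag surface $\gamma\times\R$ and invoking the surface result), so your route is more uniform.

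Part (2), however, has a genuine gap: your map $\Theta$ is \emph{not} bi-Lipschitz in general. Claim (i) is false for the surface $\sigma$. Writing out the difference for $\sigma(x,t)=t-\tfrac12xf(x,t)+\int_0^x f(\xi,t)\,d\xi$ one finds
\[
\sigma(p)-\sigma(q)-(q^{-1}p)_3=\underbrace{\int_{x'}^{x}f(\xi,t)\,d\xi-\tfrac12(x-x')\bigl(f(x,t)+f(x',t)\bigr)}_{\text{trapezoidal, }O(|x-x'|^3)}+\tfrac12(x-x')\bigl(f(x',t)-f(x',t')\bigr)+\int_0^{x'}\bigl(f(\xi,t)-f(\xi,t')\bigr)\,d\xi,
\]
and the last integral is of order $|x'|\,|t-t'|$, not $|x-x'|^3$. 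This term comes from the fact that the reference paths for $\sigma(p)$ and $\sigma(q)$ traverse \emph{different} $t$-slices of $M$; it cannot be removed by choosing another ``coordinate'' reference path. Concretely, take your own normalized model $M=\{y=t\}$, so $f(x,t)=t$ and $\sigma(x,t)=t(1+x/2)$. Fix $X,T\in(0,\varepsilon)$ and choose $p,q$ with $x_p=X+\gamma/2$, $x_q=X-\gamma/2$, $t_p=T+\beta/2$, $t_q=T-\beta/2$ where $\gamma=-2\beta(1-X/2)/T$, so that $(q^{-1}p)_3=0$ exactly. A direct computation gives $\sigma(p)-\sigma(q)=\beta X$ while $d_K(p,q)\asymp|\gamma|+|\beta|\asymp|\beta|/T$. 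Hence
\[
\frac{d_K(\Theta p,\Theta q)}{d_K(p,q)}\gtrsim\frac{|\sigma(p)-\sigma(q)|^{1/2}}{|\beta|/T}=\frac{T\sqrt{X}}{\sqrt{|\beta|}}\xrightarrow[\beta\to0]{}\infty.
\]
So $\Theta$ has unbounded distortion on \emph{every} neighborhood of $0$. The paper avoids this by foliating $M$ near $p_0$ by \emph{horizontal} curves (Lemma~\ref{lem:foliation}) and using the leaf parameter $v$ in place of your $\sigma$; the point is that $v$ is constant along horizontal curves, whereas your $\sigma$ is not, and this is exactly what makes the error term controllable (Lemma~\ref{lem:surfaces}). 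If you replace your ``$t$-then-$x$'' drift by the transverse coordinate of the horizontal foliation, your plan of mapping into the vertical plane $P_0$ and then applying Theorem~\ref{thm:line-plane}(2) does go through --- but that is precisely the paper's argument.
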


The dimension for the second part is sharp. Moreover, the embeddings are only local as there exists a $C^{1,1}$ curve $\gamma$ such that $(\g,d_{\bH})$ does not bi-Lipschitz embed into any Euclidean space (simply take a $C^{1,1}$ curve that contains $\mathbb{Z}^3$).

The case of 2-manifolds with characteristic points is more complicated, and we only give partial results. In \textsection\ref{sec:other}, we show that every $C^{1,1}$ surface obtained by revolving a curve around the $z$-axis admits local bi-Lipschitz embeddings into $\R^4$. In particular, we show that all Kor\'{a}nyi spheres bi-Lipschitz embed in $\R^4$ with a uniform constant. Such surfaces contain at most 2 characteristic points. In \textsection\ref{sec:1/2xy} we show that the surface $z=\frac12xy$, which contains infinitely many characteristic points, bi-Lipschitz embeds in $\R^{19}$.

Recall that a subset $E$ of a metric space $X$ is \emph{porous} if there exists $c\geq 1$ such that, for every $x\in E$ and every $r>0$, the set $B(x,r)\setminus E$ contains a ball of radius $r/c$. Following the techniques of Pansu and Semmes and using an Arzel\'a-Ascoli argument, one can show that a non-porous subset of $\bH$ admits no bi-Lipschitz embedding in any Euclidean or Hilbert space. As all the curves and surfaces considered above are porous sets in $\bH$, it is natural to ask if all compact porous subsets of $\bH$ admit a bi-Lipschitz embedding into some Euclidean space. In \textsection\ref{sec:porous} we answer this question negatively.

\begin{thm} \label{th:bad-porous}
  There exists a compact porous subset $X \subset \H$ that does not admit a bi-Lipschitz embedding into $\ell_2$.
\end{thm}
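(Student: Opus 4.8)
The plan is to build $X$ from rescaled copies of a family of finite \emph{gadgets} which are simultaneously uniformly porous in $\bH$ and asymptotically impossible to embed into $\ell_2$, exploiting the trivial fact that the restriction of an $L$-bi-Lipschitz map stays $L$-bi-Lipschitz. Precisely, it suffices to produce finite sets $G_n\subset\bH$ and an absolute constant $c\ge 1$, with $\diam_{d_K}G_n=1$, such that $G_n$ is $c$-porous in $\bH$ at all scales above its separation scale while $\Lip((G_n,d_K),\ell_2)\to\infty$. Given such $G_n$, fix $p_\infty\in\bH$ and Heisenberg similarities $\psi_n$ (compositions of left translations and dilations $\delta_{r_n}$ with $r_n\downarrow 0$ lacunary) so that $X_n:=\psi_n(G_n)$ have pairwise disjoint $r_{n-1}/10$-neighborhoods, accumulate only at $p_\infty$, and satisfy $\diam X_n\ll\dist(X_n,\{p_\infty\}\cup\bigcup_{m\ne n}X_m)$, and set $X:=\{p_\infty\}\cup\bigcup_nX_n$. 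Since $\psi_n$ is a similarity, each $X_n$ is isometric to a rescaling of $G_n$, hence still $c$-porous with $\Lip(X_n,\ell_2)=\Lip(G_n,\ell_2)$; so an $L$-bi-Lipschitz embedding of $X$ into $\ell_2$ would force $\Lip(G_n,\ell_2)\le L$ for all $n$, a contradiction. A short case analysis on the scale $r$ --- intra-gadget scales handled by porosity of $G_n$, and inter-gadget and near-$p_\infty$ scales handled by the lacunary spacing, since in any ball $B(p,r)$ with $r$ above the local gadget size $X$ occupies only a tiny sub-ball together with at most one small gadget --- shows $X$ is compact and porous.

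So the theorem reduces to constructing the gadgets, and this is the crux. The obvious candidates fail from opposite sides: a $2^{-n}$-net of a Heisenberg ball does have $\Lip(\cdot,\ell_2)\to\infty$ --- otherwise an Arzel\'a--Ascoli / ultraproduct limit of bounded-distortion embeddings of finer and finer nets would embed a ball of $\bH$ bi-Lipschitzly into a Hilbert space, contradicting \cite{li} --- but such a net is not porous; and conversely a uniformly $c$-porous subset of $\bH$ has Assouad dimension at most $4-\eta(c)<4$, so the gadgets must be genuinely ``thin'' and yet retain the feature of $\bH$ obstructing Hilbert embeddings, namely that its central (vertical) directions are invisible to differentials of Lipschitz maps.

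To reconcile these I would build the $G_n$ inside $\bH$ by a Laakso-type self-similar substitution: replace a Heisenberg horizontal segment by a controlled branching family of alternative horizontal paths between its endpoints whose midpoints are displaced in the central direction by an amount comparable, in $d_K$, to the length of that segment, and iterate $n$ times (taking a suitable $2^{-n}$-net of the result). The central displacement is the Heisenberg commutator defect, and insisting that it have the \emph{full} order of the edge length is what makes $G_n$ behave like a positive-dimensional-fibre bundle over its horizontal projection rather than like a tree or a planar set, and is what should force the distortions up: one may either compare $G_n$ with the depth-$n$ Laakso/diamond graphs, whose $\ell_2$-distortions are classically $\asymp\sqrt n$, or pass to the self-similar limit $Y$ --- itself a compact porous subset of $\bH$, so that one may simply take $X=Y$ --- and run a Cheeger-type differentiation of Lipschitz maps $Y\to\ell_2$ against a natural Ahlfors-regular measure and a Poincar\'e inequality coming from the horizontal curves kept in the construction, whose conclusion (differentials factor through a bundle of rank below the Hausdorff dimension) is incompatible with bi-Lipschitzness. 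Since at every scale the construction is a self-similar branching family of horizontal segments whose closure has Hausdorff dimension strictly less than $4$, it is uniformly porous in the $4$-regular space $\bH$, giving the porosity of the gadgets.

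The main obstacle is the uniform two-sided metric estimate for these Heisenberg gadgets: the upper bound $d_K\lesssim(\text{combinatorial distance})$ is routine, but the lower bound $d_K\gtrsim(\text{combinatorial distance})$ requires proving that after many scales of substitution combinatorially far points have not drifted metrically close. This is exactly where non-commutativity of $\bH$ must enter essentially: the analogous construction inside $\R^N$ fails, since diamond-type graphs have $\ell_2$-distortion tending to infinity while $\R^N\subset\ell_2$. The real work is to telescope the central displacements contributed by all the scales through the Heisenberg group law --- a Baker--Campbell--Hausdorff / homogeneity computation --- and show they do not conspire to cancel; if instead the differentiation route is used, the corresponding difficulty is designing the substitution so that enough horizontal connectivity survives at every scale for a Poincar\'e inequality to hold while the central defect is still forced and the set remains porous.
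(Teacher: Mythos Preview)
Your strategy is the paper's strategy: embed Laakso-type graphs into $\bH$, pass to a limit, and argue non-embeddability into $\ell_2$ plus porosity. The paper, however, does not build the Heisenberg gadgets from scratch. It imports the explicit maps $f_n:G_n\to\bH$ constructed in \cite{li}, where the two-sided distortion bound (your ``main obstacle'') and the fact that $f_n(G_n)$ has $\ell_2$-distortion tending to infinity (via Markov $p$-convexity, not diamond-graph comparison or Cheeger differentiation) are already proven. The paper then sets $X=f(G_\infty)$ and its sole original contribution is the porosity of $X$, done by a fairly delicate direct argument (Proposition~\ref{p:porous} and Lemmas~\ref{l:terminals}--\ref{l:cone}): at each point and scale one locates an explicit ball in the complement by exploiting cone-separation properties of the specific embedding from \cite{li}.

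Your proposal has the right architecture but leaves both hard steps unfinished, and one of your shortcuts is actually false. The lower bound $d_K\gtrsim(\text{combinatorial distance})$ that you correctly flag as the crux is never established, and without it neither of your non-embeddability routes goes through: the comparison with abstract Laakso graphs presupposes uniform bi-Lipschitz equivalence, and the Cheeger/PI route presupposes Ahlfors regularity plus a Poincar\'e inequality on the image, both of which rest on that same lower bound. More seriously, your porosity argument ``Hausdorff dimension strictly less than $4$, hence uniformly porous in the $4$-regular space $\bH$'' is wrong as stated --- there are sets of any Hausdorff dimension that are dense in $\bH$. What you need is Ahlfors $s$-regularity with $s<4$, which again circles back to the unproven metric estimate. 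The paper's porosity proof is not a dimension count at all; it is a scale-by-scale geometric verification specific to the branching angles $\theta_j$ chosen in \cite{li}.
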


The paper is organized as follows.
In Section~\ref{sec:prelim} we will introduce the Heisenberg group and we will prove some basic estimates for the Kor\'{a}nyi metric which will be used repeatedly in the following.
In Section \ref{sec:affine} we will prove Theorem \ref{thm:line-plane}, while in Section \ref{sec:smoothreg} we consider smooth curves and smooth regular surfaces and provide the proof of Theorem \ref{thm:curve-surface}. In Section \ref{sec:other} we study several examples of smooth surfaces with characteristic points, and, in particular, we prove that Kor\'{a}nyi spheres bi-Lipschitz embed into $\R^4$ with a uniform constant. Finally in Section \ref{sec:porous} we use Laakso graphs in order to construct a porous compact subset of $\bH$ which does not admit a bi-Lipschitz embedding into any Euclidean space.

\section{Preliminaries}\label{sec:prelim}

Given numbers $x,y\geq 0$ and parameters $a_1,\dots, a_n$, we write $x \lesssim_{a_1,\dots,a_n} y$ if there exists a positive and finite constant $C$ depending on at most  $a_1,\dots,a_n$ such that $x \leq C y$. We write $x \simeq_{a_1,\dots,a_n} y$ when $x \lesssim_{a_1,\dots,a_n} y$ and $y \lesssim_{a_1,\dots,a_n} x$. Similarly, we write $x\lesssim y$ or $x\simeq y$ to denote that the implicit constants are universal.

Recall (see \cite{BH,Wu}) that there exists $L_{\Phi}>1$ and an embedding $\Phi: \R \to \R^3$ such that
\begin{enumerate}
\item for all $x,y\in\R$, 
\[ L_{\Phi} ^{-1}|x-y|^{1/2} \leq |\Phi(x) - \Phi(y)| \leq L_{\Phi} |x-y|^{1/2},\]
\item $\Phi([0,1]) \subset [0,1]^3$ with $\Phi(0) = (1/2,1/2,0)$ and $\Phi(1) = (1/2,1/2,1)$,
\item for all $t\in [0,1]$,
\[ \dist(\Phi(t),\partial[0,1]^3) \leq L_{\Phi}^{-1}\dist(\Phi(t),\{(1/2,1/2,0),(1/2,1/2,1)\}).\]
\end{enumerate}
That is, $\Phi$ is a bi-Lipschitz embedding of $(\mathbb{R},|\cdot|^{1/2})$ in $\mathbb{R}^3$.
%For the construction of $\Phi$ see the Appendix.

Similarly, there exists $\phi : \S^1 \to \R^3$ such that, for all $x,y\in \S^1$,
\begin{equation}
\label{snowflake2}
L_{\Phi} ^{-1}|x-y|^{1/2} \leq |\phi(x) - \phi(y)| \leq L_{\Phi} |x-y|^{1/2}.
\end{equation}

\subsection{Heisenberg group}

The \emph{Heisenberg group} is the sub-Riemannian manifold $(\R^3,H,g)$ where $H$ is the \emph{horizontal distribution} generated by the vector fields
\[ X = \partial_x - \frac12 y \partial_z \qquad\text{and}\qquad Y = \partial_y + \frac12 x\partial_z\]
and with $g$ given by
\[ g(aX + bY,cX+dY) = ac + bd.\]
The Heisenberg group can also be thought as a group $\H = (\R^3, \cdot)$ with the group law
\[ (x,y,z)\cdot(x',y',z') = \left(x+x',y+y', z+z' + \tfrac12 (xy'-x'y)\right).\]

An absolutely continuous curve $\g(t) = (x(t),y(t),z(t))$ with $t\in[0,T]$ is \emph{horizontal} 
if $\g'(t) \in H_{\g(t)}$ for almost all $t\in [0,T]$. That is,
\[ 
z'(t) + \tfrac12x'(t)y(t) - \tfrac12x(t)y'(t) = 0 \qquad\text{for almost all }t\in[0,T].
\]

The \emph{Carnot-Carath\'eodory} metric on $\mathbb{H}$ is defined by
\[ d_{cc}(p,q) = \inf_{\g}  \int_0^1 \sqrt{g(\dot{\g}(t),\dot{\g}(t))} \, dt\]
where the infimum is taken over all horizontal curves $\g:[0,1] \to \R^3$ such that $\g(0)=p$ and $\g(1)=q$.

The following example describes the horizontal curves on planes. 

\begin{ex}
In vertical planes $y = bx + c$, the horizontal curves are exactly the lines
\[ \left\{ \left(t, bt + c, -\tfrac12 ct + C_1 \right): t\in\R \right\},\qquad C_1\in\R\]
while the vertical lines $\{(0,0,t + C_2): t\in\R\}$, $C_2\in\R$ are bi-Lipschitz homeomorphic to the snowflake $(\R,|\cdot|^{1/2})$. 

On the other hand, in the plane $z=ax+by+c$, the horizontal curves are exactly the lines
which project to
\[ 
\{(-2b,2a) + t s: t\in\R\},\qquad s\in\S^1,
\]
in the $xy$-plane,
while the 
curves which project to the
concentric circles 
$$
\{(-2b,2a) + ts: s \in\S^1\},\qquad t > 0
$$ 
are bi-Lipschitz homeomorphic to re-scaled copies of the snowflake $(\S^1,|\cdot|^{1/2})$.
\end{ex}

Given points $p=(x,y,z)$ and $q=(x',y',z')$ in $\bH$, define the \emph{Kor\'anyi metric}
$$
d_K(p,q) = \left( \left((x-x')^2 + (y-y')^2\right)^2 + 16(z-z' + \tfrac12 (xy' - x'y))^2 \right)^{1/4}
$$
It is well known that $d_K$ is a metric, and it is also bi-Lipschitz equivalent to $d_{cc}$
in the sense that 
$$
C^{-1} d_K(p,q) \leq d_{cc}(p,q) \leq C d_K(p,q)
\quad
\text{for all }
p,q \in \mathbb{H}
$$ 
for some constant $C \geq 1$; see \cite{CDPT}.
More often, we consider the following distance $d_{\bH}$ (which is not a true metric since there is a sub-additive constant present in the triangle inequality):
\[ d_{\bH}(p,q) = |x-x'| + |y-y'| + \left|z-z' + \tfrac{1}{2}(xy' - x'y) \right|^{\frac12}.\]
Though $d_{\bH}$ is not a metric, it is bi-Lipschitz equivalent to $d_K$ in the above sense. 
Thus, it suffices to construct bi-Lipschitz embeddings of sets $A \subset \mathbb{R}^3$ endowed with $d_{\bH}$.
This provides a bi-Lipschitz embedding of the set $A$ when considered as a subset of the metric space $(\bH, d_K)$.
The distance $d_{\mathbb{H}}$ is related to the Euclidean metric in the following way.

\begin{lem}\label{lem:comparison}
Suppose that $R>0$ and $w,w' \in \R^3$ with $\max\{|w|,|w'|\} \leq R$. Then
\[ \min\{R^{1/2},R^{-1}\}|w-w'| \lesssim d_{\bH}(w,w') \lesssim \max\{1,R^{1/2}\}|w-w'|^{1/2}.\]
\end{lem}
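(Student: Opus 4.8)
The plan is to unwind the definition of $d_{\bH}$ and estimate its three pieces directly against the Euclidean difference, using only that $w,w'$ lie in the ball $B(0,R)$. Write $\Delta x := x-x'$, $\Delta y := y-y'$, $\Delta z := z-z'$. The first step is to rewrite the twisted third term: since $xy' - x'y = y\,\Delta x - x\,\Delta y$, the quantity inside the fourth root of $d_{\bH}$ equals $\tau := \Delta z + \tfrac12(y\,\Delta x - x\,\Delta y)$, where $|x|,|y| \le |w| \le R$. Thus $d_{\bH}(w,w') = |\Delta x| + |\Delta y| + |\tau|^{1/2}$ with all the $R$-dependence now explicit and linear in $\Delta x,\Delta y$; also note the trivial bounds $|\Delta x|,|\Delta y| \le \min\{|w-w'|,\, d_{\bH}(w,w')\}$, which will be used in both directions.

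For the upper bound I would use $|w-w'| \le 2R$ to trade one power of $|w-w'|$ for a square root: $|\Delta x| \le |w-w'|^{1/2}|w-w'|^{1/2} \le (2R)^{1/2}|w-w'|^{1/2}$, and likewise for $|\Delta y|$. For the twisted term, $|\tau| \le |\Delta z| + \tfrac R2(|\Delta x|+|\Delta y|) \le (1+R)|w-w'|$, so $|\tau|^{1/2} \le (1+R)^{1/2}|w-w'|^{1/2}$. Summing and absorbing $2\sqrt{2R} + \sqrt{1+R} \lesssim \max\{1,R^{1/2}\}$ gives $d_{\bH}(w,w') \lesssim \max\{1,R^{1/2}\}|w-w'|^{1/2}$.

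The lower bound is where the single genuine subtlety lies: the twisted coordinate only appears under a square root in $d_{\bH}$, so recovering $|\Delta z|$ from $\Delta z = \tau - \tfrac12(y\,\Delta x - x\,\Delta y)$ produces a term $|\tau| \le d_{\bH}(w,w')^2$, i.e.\ $d_{\bH}$ enters quadratically. To handle this I would first plug $|w-w'| \le 2R$ into the upper bound already established to get the a priori estimate $d_{\bH}(w,w') \lesssim \max\{R^{1/2},R\}$, and then write $d_{\bH}(w,w')^2 = d_{\bH}(w,w')\cdot d_{\bH}(w,w') \lesssim \max\{R^{1/2},R\}\,d_{\bH}(w,w')$. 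Together with $|\Delta z| \le |\tau| + \tfrac R2(|\Delta x| + |\Delta y|) \le d_{\bH}(w,w')^2 + R\,d_{\bH}(w,w')$ (using $|\Delta x|,|\Delta y| \le d_{\bH}(w,w')$), this yields $|\Delta z| \lesssim \max\{1,R\}\,d_{\bH}(w,w')$, hence $|w-w'| \le |\Delta x| + |\Delta y| + |\Delta z| \lesssim \max\{1,R\}\,d_{\bH}(w,w')$.

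It then remains only to reconcile the bookkeeping: one checks $\max\{1,R\} \le \max\{R^{-1/2},R\}$ for every $R>0$ (splitting into $R\le 1$ and $R\ge 1$), so $|w-w'| \lesssim \max\{R^{-1/2},R\}\,d_{\bH}(w,w')$, which is equivalent to $\min\{R^{1/2},R^{-1}\}\,|w-w'| \lesssim d_{\bH}(w,w')$, the left-hand inequality of the lemma. The main obstacle, as indicated, is the quadratic appearance of $d_{\bH}$ in the lower bound; the device of converting $d_{\bH}^2$ into $d_{\bH}$ via the a priori upper bound is the only nonroutine step, and everything else is elementary manipulation of the definition together with the coordinatewise estimates above.
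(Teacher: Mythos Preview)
Your proof is correct. The upper bound argument is essentially identical to the paper's. For the lower bound, however, the paper proceeds by a direct case split: if $\max\{|\Delta x|,|\Delta y|\} \le (2R)^{-1}|\Delta z|$ then the twisted term alone already dominates $|\Delta z|^{1/2}$, while otherwise $|\Delta x|+|\Delta y|$ already dominates $|w-w'|$. Your route instead avoids any case analysis by bootstrapping from the upper bound: you feed $|w-w'|\le 2R$ back into the upper estimate to obtain the a priori bound $d_{\bH}\lesssim\max\{R^{1/2},R\}$, and then use it to linearize the quadratic term $|\tau|\le d_{\bH}^2$. This is a genuinely different (and arguably cleaner) mechanism; the paper's case split is slightly more elementary, but your bootstrap trick has the advantage of being case-free and would adapt more readily to variants where a clean dichotomy is less natural.
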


\begin{proof}
Note that,
if $w=(x,y,z)$ and $w' = (x',y',z')$, then
\begin{equation}\label{eq:comparison}
d_{\bH}(w,w') = |x-x'| + |y-y'| + |z-z' + \tfrac12y'(x-x') -\tfrac12x'(y-y')|^{1/2}.
\end{equation}
Therefore, by (\ref{eq:comparison}) and the triangle inequality, we have
\begin{align*} 
d_{\bH}(w,w') &\leq |z-z'|^{1/2} + |x-x'|^{1/2}(|y'|^{1/2} + |x-x'|^{1/2}) + |y-y'|^{1/2}(|x'|^{1/2} + |y-y'|^{1/2})\\
&\lesssim (1+R^{1/2})|w-w'|^{1/2}.
\end{align*}

If $\max\{|x-x'|, |y-y'|\} \leq (2R)^{-1}|z-z'|$, then by (\ref{eq:comparison})
\begin{align*} 
d_{\bH}(w,w') \geq (|z-z'| - \tfrac12R|x-x'| -  \tfrac12R|y-y'|)^{1/2} &\gtrsim |z-z'|^{1/2} \\
&\gtrsim \min\{1,R^{1/2}\}|w-w'|^{1/2}  \\
&\geq \min\{R^{-1/2},R^{1/2}\}|w-w'|.
\end{align*}

If $\max\{|x-x'|, |y-y'|\} > (2R)^{-1}|z-z'|$, then by (\ref{eq:comparison})
\begin{align*} 
d_{\bH}(w,w') \geq |x-x'| + |y-y'| \gtrsim \min\{1,R^{-1}\}|w-w'|.
\end{align*}
The proof completes by noting that $\min\{1,R^{-1},R^{1/2},R^{-1/2}\} = \min\{R^{-1},R^{1/2}\}$.
\end{proof}

%\item Given, $\e\in(0,1)$, a curve $\g \subset M$ is a ($\e$-)\emph{cohorizontal curve} if
%\[ \e |x-y|^{1/2} \leq d_{\mathbb{H}}(x,y) \leq \e^{-1}|x-y|^{1/2}.\]
%\end{enumerate}

The length of a horizontal curve between two points can be estimated by the following lemma.

\begin{lem}\label{lem:horiz}
Let $w:[t_1,t_2] \to \bH$ be a horizontal curve with $w(t) = (x(t),y(t),z(t))$. Then,
\[ d_{\bH}(w(t_1),w(t_2)) \lesssim \left(\|x'(t)\|_{\infty} + \|y'(t)\|_{\infty}
% + \|x'(t)\|_{\infty}^{1/2}\|y'(t)\|_{\infty}^{1/2} 
\right ) |t_1-t_2|.\]
\end{lem}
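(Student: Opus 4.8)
The plan is to estimate the three terms appearing in the formula for $d_{\bH}$ separately. Writing $p = w(t_1)$ and $q = w(t_2)$, the definition of $d_{\bH}$ gives
\[
d_{\bH}(w(t_1),w(t_2)) = |x(t_1)-x(t_2)| + |y(t_1)-y(t_2)| + |\D|^{1/2},
\]
where $\D := z(t_1)-z(t_2) + \tfrac12\big(x(t_1)y(t_2) - x(t_2)y(t_1)\big)$. The first two terms are harmless: by the fundamental theorem of calculus $|x(t_1)-x(t_2)| \leq \|x'\|_\infty|t_1-t_2|$ and $|y(t_1)-y(t_2)| \leq \|y'\|_\infty|t_1-t_2|$, which already has the desired form. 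Thus the entire content of the lemma is the estimate $|\D| \lesssim (\|x'\|_\infty + \|y'\|_\infty)^2 |t_1-t_2|^2$.

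To get this, I would first use horizontality of $w$, which reads $z'(t) = \tfrac12\big(x(t)y'(t) - x'(t)y(t)\big)$ for a.e.\ $t$, to express $z(t_2)-z(t_1)$ as $\tfrac12\int_{t_1}^{t_2}\big(x(t)y'(t) - x'(t)y(t)\big)\,dt$. Next I would rewrite the bilinear term as an integral as well, via the identity $x(t_1)y(t_2) - x(t_2)y(t_1) = \int_{t_1}^{t_2}\big(x(t_1)y'(t) - y(t_1)x'(t)\big)\,dt$, which follows by adding and subtracting $x(t_1)y(t_1)$ and applying the fundamental theorem of calculus to each of the two resulting differences. Combining the two expressions, the potentially dangerous zeroth-order contributions cancel and $\D$ collapses to
\[
\D = \tfrac12\int_{t_1}^{t_2}\Big[\big(x(t_1)-x(t)\big)y'(t) - \big(y(t_1)-y(t)\big)x'(t)\Big]\,dt.
\]
Since $|x(t_1)-x(t)| \leq \|x'\|_\infty|t-t_1| \leq \|x'\|_\infty|t_1-t_2|$ on the interval of integration, and similarly for $y$, the integrand is bounded by $\|x'\|_\infty\|y'\|_\infty|t_1-t_2|$ pointwise, giving $|\D| \leq \|x'\|_\infty\|y'\|_\infty|t_1-t_2|^2$. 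Taking square roots and using $\sqrt{ab}\leq\tfrac12(a+b)$ yields $|\D|^{1/2}\lesssim (\|x'\|_\infty+\|y'\|_\infty)|t_1-t_2|$, and adding the three bounds completes the proof.

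I do not expect a serious obstacle here. The one point requiring care is the algebraic rearrangement that turns $\D$ into the ``difference'' integral above: this is precisely where the vertical drift forced by horizontality cancels against the twisting term built into $d_{\bH}$, and it is what makes the bound linear in $|t_1-t_2|$ rather than merely H\"older of order $1/2$ (which is all one would get from Lemma~\ref{lem:comparison} applied crudely). Everything else is an application of the fundamental theorem of calculus and the AM--GM inequality.
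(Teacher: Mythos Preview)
Your proof is correct and is essentially identical to the paper's own argument: both bound the $x$- and $y$-differences trivially, use horizontality to convert the $z$-term into an integral, rewrite the bilinear term as an integral, combine them into a single integral of differences, and finish with AM--GM. The only cosmetic distinction is that the paper anchors the differences at $t_2$ (obtaining $\int_{t_1}^{t_2}[x'(t)(y(t)-y(t_2)) - y'(t)(x(t)-x(t_2))]\,dt$) while you anchor at $t_1$.
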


\begin{proof}
We have from the absolute continuity of $x$ and $y$ that
\begin{align*} 
|x(t_1) - x(t_2)| + |y(t_1) - y(t_2)| 
= \left| \int_{t_1}^{t_2} x'(s) \, ds \right| + \left| \int_{t_1}^{t_2} y'(s) \, ds \right| \leq \left(\|x'(t)\|_{\infty} + \|y'(t)\|_{\infty}\right)|t_1-t_2|.
\end{align*}
Moreover,
\begin{align*} 
\Big|z(t_1) &- z(t_2) +\frac12x(t_1)y(t_2) - \frac12x(t_2)y(t_1) \Big|^{1/2} \\
&= 2^{-1/2} \left | \int_{t_1}^{t_2} [x'(t)y(t) - x(t)y'(t)] \, dt +x(t_1)y(t_2) - x(t_2)y(t_1)\right |^{1/2} \\
&= 2^{-1/2} \left | \int_{t_1}^{t_2} [x'(t)y(t) - x(t)y'(t)] \, dt -y(t_2)(x(t_2) - x(t_1)) + x(t_2)(y(t_2)-y(t_1))\right |^{1/2} \\
&= 2^{-1/2} \left | \int_{t_1}^{t_2} [x'(t)(y(t)-y(t_2)) \, dt - \int_{t_1}^{t_2}y'(t)(x(t)-x(t_2)) \, dt \right |^{1/2} \\
&\leq \left ( \int_{t_1}^{t_2} |x'(t)||y(t)-y(t_2)| \, dt + \int_{t_1}^{t_2}|y'(t)||x(t)-x(t_2)| \, dt \right )^{1/2}\\
&\leq  \|x'(t)\|_{\infty}^{1/2}\|y'(t)\|_{\infty}^{1/2} |t_1-t_2|.
\end{align*}
The proof completes by noting that $\|x'(t)\|_{\infty}^{1/2}\|y'(t)\|_{\infty}^{1/2} \lesssim \|x'(t)\|_{\infty} + \|y'(t)\|_{\infty}$.
\end{proof}

Given a $C^{1,1}$ $2$-manifold $M \subset\R^3$ and a point $p\in M$, we denote by $T_pM$ (resp. $H_p$) the tangent plane of $M$ (resp. the horizontal distribution) at $p$.

\begin{defn}
Let $M$ be a $C^{1,1}$ $2$-manifold. A point $p\in M$ is \emph{characteristic} if $T_pM = H_p$. A point $p\in M$ is \emph{regular} if it is not characteristic.
\end{defn}

%\emph{regular} if there exists an open set $U \subset \R^3$ containing $p$, and a $C^2$ function $F:U \to \R$ such that 
%\begin{enumerate}
%\item $M\cap U = \{w\in U : F(w) = 0\}$,
%\item $\nabla_{\mathbb{H}}F(p) \neq (0,0)$, that is,
%\[ (2F_x(p)-y F_z(p), 2F_y(p)+x F_z(p)) \neq (0,0).\]
%\end{enumerate}
%A point $p\in M$ is \emph{characteristic} if it is not regular.
%\end{defn}

In the example of vertical planes, all points are regular. In the example of the plane $z=0$, all points are regular except for the origin which is characteristic.

\begin{thm}\label{thm:regae}{\cite[Theorem 1.2]{Balogh}} Let $\mathscr{C}(M)$ be the set of characteristic points of a $2$-manifold $M$ in $\R^3$.
\begin{enumerate}
\item If $M$ is $C^{1,1}$ then $\mathscr{C}$(M) has (Euclidean) Hausdorff dimension less than $2$.
\item If $M$ is $C^{2}$ then $\mathscr{C}$(M) has (Euclidean) Hausdorff dimension less or equal to $1$.
\end{enumerate}
\end{thm}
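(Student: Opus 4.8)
The plan is to reduce, near any characteristic point, to a statement about the zero set of an explicit planar vector field, and then to treat the two regularity classes separately. Fix $p\in\mathscr{C}(M)$. Since $T_pM=H_p=\mathrm{span}\{X,Y\}$ is transverse to the $z$-axis, the implicit function theorem writes $M$ locally as a graph $z=u(x,y)$ over a planar domain $\Omega$ with $u$ of the same regularity as $M$. The tangent plane at $(x,y,u(x,y))$ is spanned by $\partial_x+u_x\partial_z$ and $\partial_y+u_y\partial_z$, and it coincides with $H=\ker(dz+\tfrac12 y\,dx-\tfrac12 x\,dy)$ exactly when $u_x+\tfrac12 y=0$ and $u_y-\tfrac12 x=0$. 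Thus, writing
\[
V(x,y):=\bigl(u_x(x,y)+\tfrac12 y,\; u_y(x,y)-\tfrac12 x\bigr),\qquad \omega:=V_1\,dx+V_2\,dy,
\]
the characteristic set in this patch is the graph-map image (a bi-Lipschitz copy) of $Z:=V^{-1}(0)\subset\Omega$. The form $\omega$ is the pullback to $M$ of the contact form, and the key structural fact is that $d\omega$ never vanishes: since the distributional mixed partials of $u$ commute,
\[
\partial_x V_2-\partial_y V_1=\bigl(u_{yx}-\tfrac12\bigr)-\bigl(u_{xy}+\tfrac12\bigr)=-1,
\]
i.e.\ $d\omega=-\,dx\wedge dy$. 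As $M$ is second countable, $\mathscr{C}(M)$ is a countable union of such patches, so it suffices to bound $\dim_H Z$ in each, with the bound depending only on the local regularity data of $u$; countable stability of Hausdorff dimension and bi-Lipschitz invariance then give the statement for $\mathscr{C}(M)$.

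\emph{The $C^2$ case.} Here $V$ is $C^1$ and $DV$ has rows $\nabla V_1,\nabla V_2$. If $DV(q)=0$ then $\partial_x V_2(q)-\partial_y V_1(q)=0$, contradicting the identity above; hence $DV$ vanishes nowhere, so at each $q\in Z$ one of $\nabla V_1(q),\nabla V_2(q)$ is nonzero. By the implicit function theorem the corresponding level set $\{V_i=0\}$ is a $C^1$ curve near $q$ and contains $Z$ there. Covering $Z$ by countably many such neighborhoods gives $\dim_H Z\le 1$ (indeed $Z$ is $\sigma$-finite for $\mathcal{H}^1$), hence $\dim_H\mathscr{C}(M)\le 1$.

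\emph{The $C^{1,1}$ case.} Now $V$ is merely Lipschitz; let $L$ be its Lipschitz constant on the patch, comparable to the $C^{1,1}$-norm of $u$ there, and note $L\gtrsim 1$. Since $V\in W^{1,1}_{\loc}$, Green's theorem yields, for every closed disk $D\subset\Omega$,
\[
\int_{\partial D}\omega=\int_D d\omega=-\,\mathcal{L}^2(D).
\]
I will use this to show $Z$ is porous in $\R^2$ with a porosity constant depending only on $L$. Fix $q\in Z$ and small $r>0$ with $B(q,r)\subset\Omega$, and suppose toward a contradiction that $B(q,r/2)$ contains no ball of radius $\rho$ disjoint from $Z$. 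Then every point of $\partial B(q,r/4)$ lies within $2\rho$ of $Z$ (otherwise an ambient $\rho$-ball about such a point would sit in $B(q,r/2)$ and miss $Z$), so $|V|\le 2L\rho$ on $\partial B(q,r/4)$ since $V$ vanishes on $Z$; consequently
\[
\frac{\pi r^2}{16}=\mathcal{L}^2\bigl(B(q,r/4)\bigr)=\Bigl|\int_{\partial B(q,r/4)}\omega\Bigr|\le 2L\rho\cdot\frac{\pi r}{2},
\]
forcing $\rho\ge r/(16L)$. Hence, for $\rho$ slightly below $r/(16L)$, the ball $B(q,r)$ does contain a ball of radius $\gtrsim r/L$ disjoint from $Z$, which establishes porosity. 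Since a porous subset of $\R^2$ has Hausdorff (indeed Assouad) dimension strictly below $2$, quantitatively in terms of the porosity constant, we conclude $\dim_H Z<2$ and therefore $\dim_H\mathscr{C}(M)<2$.

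\emph{Main obstacle.} The delicate point is the $C^{1,1}$ estimate. A soft argument gives only $\mathcal{L}^2(\mathscr{C}(M))=0$: if $Z$ had positive planar measure, then by the Lebesgue density theorem and Rademacher's theorem it would contain a point that is simultaneously a Lebesgue density point of $Z$ and a point of differentiability of $V$; since $V$ is constant on $Z$, its differential must vanish there, which is impossible because $DV=\mathrm{Hess}(u)+\tfrac12 J$ with $J=\left(\begin{smallmatrix}0&1\\-1&0\end{smallmatrix}\right)$ forces a symmetric matrix to equal a nonzero antisymmetric one. Upgrading $\mathcal{L}^2$-nullity to a strict dimension bound is exactly what the scale-uniform Green's-theorem/porosity estimate provides; the care required is in keeping the porosity constant under control at every small scale and, for the global statement, across the countably many charts covering $\mathscr{C}(M)$ (e.g.\ by working over a compact exhaustion so that $\|u\|_{C^{1,1}}$ stays bounded), since otherwise the individual chart bounds could degenerate to $2$ in the limit.
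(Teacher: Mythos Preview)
The paper does not prove this theorem: it is quoted from Balogh and stated without proof in the preliminaries. There is therefore nothing in the paper to compare your argument against. (A fragment after \verb|\end{document}| labeled ``Proof of Theorem \ref{thm:regae}'' only argues that regular points are open and dense, which is strictly weaker than either Hausdorff-dimension bound and is not part of the published text.)

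That said, your proof is essentially correct and is in the spirit of Balogh's original argument. The reduction to the zero set of $V=(u_x+\tfrac12 y,\,u_y-\tfrac12 x)$ and the computation $\partial_xV_2-\partial_yV_1\equiv -1$ are the right starting point. In the $C^2$ case your use of the implicit function theorem is clean. In the $C^{1,1}$ case the Green's-theorem/porosity estimate is the standard mechanism; your constants are slightly off (one gets $|V|\le L\rho$ rather than $2L\rho$ on $\partial B(q,r/4)$, hence $\rho\ge r/(8L)$), but this is immaterial.

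The one point that deserves care, which you flag yourself, is the passage from local to global in part (1). Countable stability gives only $\dim_H\mathscr{C}(M)=\sup_n\dim_H Z_n$, and a supremum of numbers each strictly below $2$ can equal $2$. Your compact-exhaustion remark fixes this for compact $M$ (or for any fixed compact piece), which is all that is used in the present paper; for a general noncompact $C^{1,1}$ surface one should either check that Balogh's statement is meant locally/on compacta, or give a uniform porosity bound on each compact set and state the conclusion accordingly.
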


\section{Lines, planes and PL complexes}\label{sec:affine}

In \textsection\ref{sec:lines} we show the first part of Theorem \ref{thm:line-plane} and in \textsection\ref{sec:planes} the second part. In \textsection\ref{sec:complexes} we show Theorem \ref{thm:complex}.

\subsection{Lines in the Heisenberg group}\label{sec:lines}
Here we show the first part of Theorem \ref{thm:line-plane} which we restate in the following proposition.

\begin{prop}\label{prop}
There exists universal $L>1$ such that, for any straight line $\ell \subset \R^3$, the space $(\ell,d_{\mathbb{H}})$ $L$-bi-Lipschitz embeds into $\R^3$.
\end{prop}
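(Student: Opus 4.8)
The plan is to reduce the problem to finitely many model lines (up to the symmetries of $\bH$ and scaling), so that it suffices to understand each model up to bi-Lipschitz equivalence. First I would recall that $\bH$ admits the anisotropic dilations $\delta_\lambda(x,y,z)=(\lambda x,\lambda y,\lambda^2 z)$, which scale $d_{\bH}$ by $\lambda$, together with left translations (isometries of $d_{\bH}$) and rotations about the $z$-axis (also isometries). Using a left translation I may assume the line $\ell$ passes through the origin, so $\ell=\{t v: t\in\R\}$ for a unit vector $v=(a,b,c)$. There are then three cases. \emph{Case 1: $v$ is horizontal at the origin}, i.e.\ $c=0$ (equivalently $\ell$ lies in the $xy$-plane). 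Then $\ell$ is a horizontal line, hence a geodesic, and Lemma~\ref{lem:horiz} together with the lower bound $d_{\bH}(w,w')\geq|x-x'|+|y-y'|$ shows that $t\mapsto tv$ is bi-Lipschitz from $(\R,|\cdot|)$ onto $(\ell,d_{\bH})$ with universal constants; the identity embedding into $\R^3$ finishes this case. \emph{Case 2: $v$ is vertical}, i.e.\ $v=(0,0,1)$ (the $z$-axis). Then $d_{\bH}((0,0,s),(0,0,t))=|s-t|^{1/2}$ exactly, so $(\ell,d_{\bH})$ is isometric to the snowflake $(\R,|\cdot|^{1/2})$, and the map $\Phi$ from the Preliminaries gives an $L_\Phi$-bi-Lipschitz embedding into $\R^3$. \emph{Case 3: the generic case}, $c\neq 0$ and $(a,b)\neq(0,0)$.

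For Case 3, after a rotation about the $z$-axis I may take $b=0$, and after the dilation $\delta_{1/|a|}$ (which only changes the bi-Lipschitz constant by a fixed factor) I may assume $v=(1,0,c)$ with $c\neq 0$; rescaling $t$, the line becomes $\{(t,0,ct):t\in\R\}$. I would compute $d_{\bH}$ along this line directly: for parameters $s,t$,
\[
d_{\bH}\bigl((s,0,cs),(t,0,ct)\bigr)=|s-t|+\bigl|\,c(s-t)-\tfrac12\cdot 0\,\bigr|^{1/2}\cdot(\text{correction}),
\]
and after simplifying the vertical term one finds $d_{\bH}\simeq |s-t|+|s-t|^{1/2}$ on this line (the sub-Riemannian geometry degenerates: the curve is not horizontal, and its CC-length between nearby points behaves like the snowflake distance at small scales and like Euclidean distance at large scales — this is exactly the mixed behavior $\min\{1,R^{\pm}\}$ appearing in Lemma~\ref{lem:comparison}). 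So $(\ell,d_{\bH})$ is bi-Lipschitz equivalent, with universal constants, to $(\R, \rho)$ where $\rho(s,t)=|s-t|+|s-t|^{1/2}$. It remains to embed $(\R,\rho)$ into $\R^3$ with a universal constant: I would use $F(t)=(t,\Phi(t))\in\R\times\R^3$ — wait, that lands in $\R^4$; instead, I would build a single map into $\R^3$ directly by a dyadic/Assouad-type construction, or better, observe that $(\R,\rho)$ is bi-Lipschitz to $(\R,|\cdot|^{1/2})$ on bounded sets and to $(\R,|\cdot|)$ at large scales and splice: place disjoint rescaled copies of $\Phi([0,1])$ along a coordinate axis in $\R^3$, using property (3) of $\Phi$ (the boundary-distance estimate) to guarantee that the welded pieces remain bi-Lipschitz. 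This gluing is where property (3) of $\Phi$ is used essentially, and it is the one genuinely delicate point.

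The main obstacle I anticipate is precisely this last gluing step in Case 3: one must embed a line whose intrinsic metric transitions from snowflake-type at small scales to Euclidean-type at large scales into $\R^3$ (not $\R^4$) with a \emph{universal} constant. The naive product map $t\mapsto(t,\Phi(t))$ costs a dimension; fitting everything into three dimensions requires laying out countably many rescaled snowflake arcs $\Phi$ along the image of the large-scale linear part without them interfering, and controlling cross-terms between distant pieces. Property (3) of $\Phi$ — that each arc stays in a thin neighborhood of the segment joining its two endpoints, with thickness controlled by the distance to those endpoints — is exactly what makes such a concatenation bi-Lipschitz with uniform constants; verifying the cross-scale estimates is the technical heart of the argument. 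The remaining cases (horizontal and vertical lines) are immediate from Lemmas~\ref{lem:horiz} and the explicit formula for $d_{\bH}$.
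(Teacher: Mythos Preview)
Your approach is essentially the paper's: the paper keeps the parameters $\kappa=|a_1|+|a_2|$ and $\lambda=|a_3+\tfrac12(c_2a_1-c_1a_2)|^{1/2}$ and writes the embedding $F(w(t))=\tfrac{\kappa}{\mu}\Phi(\mu t-[\mu t])+\tfrac{\kappa}{\mu}(0,0,[\mu t])$ with $\mu=\kappa^2/\lambda^2$, while you use left translation, rotation, and dilation to normalize $\kappa$ and $\lambda$ away and reduce to a single model line. The concatenation of copies of $\Phi([0,1])$ stacked along an axis that you describe is precisely the paper's map in the normalized case $\kappa=\lambda=\mu=1$, and property~(3) of $\Phi$ is indeed what drives the cross-piece estimate (the paper's Case~3.3, where exactly one integer lies in the interval).

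One genuine slip: after your dilation $\delta_{1/|a|}$ the slope $c$ in $\{(t,0,ct)\}$ is still an arbitrary nonzero real, and the actual formula is
\[
d_{\bH}\bigl((s,0,cs),(t,0,ct)\bigr)=|s-t|+|c|^{1/2}|s-t|^{1/2},
\]
so your claimed comparison $d_{\bH}\simeq |s-t|+|s-t|^{1/2}$ carries a hidden constant depending on $c$, and as written the final embedding constant would not be universal. The fix is immediate: apply one further dilation $\delta_{1/|c|}$, which sends the line to $\{(s,0,\pm s):s\in\R\}$, after which the comparison holds with constant~$1$. With that correction your outline is correct; the ``genuinely delicate point'' you flag is exactly the three-subcase verification the paper carries out (no integers, exactly one integer, or at least two integers in the relevant interval).
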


\begin{proof}
Suppose that $\ell$ is given by the formula
\[ w(t) = (a_1t  + c_1, a_2t + c_2, a_3 t + c_3), \qquad t\in\R \]
with $|a_1|^2 + |a_2|^2 + |a_3|^2 = 1$. Without loss of generality we may assume that $|a_2| \leq |a_1|$. Moreover, since $d_{\mathbb{H}}$ is invariant under vertical translations, we may assume that $c_3=0$. Consider the following cases.

\emph{Case 1.} Suppose that $a_1=a_2 =0$. Define the map $F:(\ell,d_{\mathbb{H}}) \to \R^3$ given by $F(w(t)) = \Phi(t)$. 
Since $|a_3|=1$,
we have for any $\tau,\tau' \in \mathbb{R}$,
\[ 
d_{\bH}(w(\tau),w(\tau')) = |\tau-\tau'|^{1/2} 
\quad
\text{and}
\quad 
|F(w(\tau))-F(w(\tau'))| = |\Phi(\tau) -\Phi(\tau')|.
\] 
Thus $F$ is $L_{\Phi}$-bi-Lipschitz where $L_{\Phi}$ is the bi-Lipschitz constant of $\Phi$.

\emph{Case 2.} Suppose that $2a_3+c_2a_1-c_1a_2 = 0$. That is, the line is a horizontal curve. Define $F:(\ell,d_{\mathbb{H}}) \to \R$ by $F(w(t)) = (|a_1|+|a_2|) t$. 
Let $\tau,\tau'\in\R$. Then
\begin{align*}
d_{\bH}(w(\tau),w(\tau')) = (|a_1|+|a_2|)|\tau-\tau'|= |F(w(\tau))-F(w(\tau'))|,
\end{align*}
so $F$ is in fact an isometry.

\emph{Case 3.} Suppose that none of the above cases applies. Set 
\[ 
\kappa = |a_1| + |a_2|
\neq 0, 
\qquad 
\lambda = \left|a_3+\tfrac12 (c_2a_1-c_1a_2) \right|^{1/2} \neq 0,
\qquad
\text{and }
\mu = \frac{\kappa^2}{\lambda^2},
\]
and define $F:(\ell,d_{\mathbb{H}}) \to \R^3$ by
\[ 
F(w(t)) = \frac{\kappa}{\mu}\Phi\left( \mu t - \left[ \mu t\right]  \right) + \frac{\kappa}{\mu} 
\left(0,0,\left[ \mu t\right] \right)
\]
where $[a]$ denotes the integer part of $a$.

Let $\tau,\tau'\in\R$, $w = w(\tau)$ and $w'=w(\tau')$. Without loss of generality, assume that $\tau'>\tau$. We have that
\[ d_{\bH}(w,w') = \kappa|\tau-\tau'| + \lambda|\tau-\tau'|^{1/2}.\]
We now divide Case 3 into three sub-cases.

\emph{Case 3.1.} 
Suppose that there exist at least two integers contained in the interval $(\mu \tau,\mu \tau')$. Let 
\[ 
m = \min \mathbb{Z}\cap (\mu\tau, \mu\tau') 
\quad
\text{and}
\quad 
n = \max \mathbb{Z}\cap (\mu\tau, \mu\tau').
\]
Then, $|\tau-\tau'| \geq \mu^{-1}$ which yields $\lambda \leq \kappa|\tau - \tau'|^{1/2}$, and thus
$$
\kappa|\tau-\tau'| 
\leq d_{\bH}(w,w') 
%= \kappa|\tau-\tau'| + \kappa \frac{\lambda}{\kappa} |\tau-\tau'|^{1/2}
\leq 2\kappa |\tau-\tau'|.
$$ 
On the other hand, since $|\mu \tau - \mu \tau'| >  1$,

\begin{align*} 
\frac{\mu}{\kappa} |F(w) - F(w')| 
&\leq \left | \Phi\left(  \mu\tau' - n \right) -  \Phi\left(  \mu\tau - (m-1)\right) \right| + (n-m+1)\\
&\lesssim  \left| \mu \tau - \mu \tau' \right|^{1/2}  +   (n-m)^{1/2} + 1   + (n-m)+1\\
&\lesssim   |\mu \tau - \mu \tau'|
= \mu|\tau-\tau'|.
\end{align*}
Notice that the third component of $\Phi$ lies between 0 and 1.
Thus we have
\begin{align*}
\frac{\mu}{\kappa} |F(w) - F(w')| 
&\geq (n-m+1) - \left | \Phi\left(  \mu\tau' - n \right) -  \Phi\left(  \mu\tau - (m-1)\right) \right| \\
&\geq (n-m) 
\gtrsim |\mu \tau - \mu \tau'| = \mu |\tau-\tau'|
\end{align*}
since $3(n-m) \geq (n-m) + 2 > |\mu \tau - \mu \tau'|$.
Hence, in this case, $|F(w)-F(w')| \simeq \kappa |\tau - \tau'| \simeq d_{\mathbb{H}}(w,w')$.

\emph{Case 3.2.} Suppose that there exists no integer in $(\mu\tau, \mu \tau')$. Then, $|\tau-\tau'| < \mu^{-1}$ which yields 
\[ \lambda|\tau-\tau'|^{1/2} \leq d_{\bH}(w,w') \leq 2\lambda|\tau-\tau'|^{1/2}.\] 
Moreover, $[\mu\tau] = [\mu\tau']$, so 
\[ 
 |F(w) - F(w')| = \frac{\kappa}{\mu} \left | \Phi\left( \mu\tau - \left[ \mu\tau\right]  \right) - \Phi\left( \mu\tau' - \left[ \mu\tau'\right]  \right) \right | 
\simeq \frac{\kappa}{\mu}|\mu\tau-\mu\tau'|^{1/2}
=\lambda|\tau-\tau'|^{1/2}. \]

\emph{Case 3.3.} Suppose that there exists exactly one integer $n$ in $(\mu\tau, \mu\tau')$. Then, $|\tau'-\tau| <2\lambda^2/\kappa^2$ which yields
\[ \lambda|\tau-\tau'|^{1/2} \leq d_{\bH}(w,w') < (1+\sqrt{2})\lambda|\tau-\tau'|^{1/2}.\]

Set $\tau'' = n / \mu \in (\tau,\tau')$ and $w'' = w(\tau'')$.
Before estimating $|F(w)-F(w')|$, we make two observations. 
Firstly, by (3) of the definition of $\Phi$,
the assumptions of this case,
and the choice of $\tau''$,
it is easy to see that 
$$
|F(w) - F(w')| \simeq |F(w)-F(w'')| + |F(w') - F(w'')|.
$$
Also, $t \mapsto \Phi(\mu t - [\mu t])$ is periodic with period $1/\mu$; 
in particular,
\[ 
|F(w(t+ 1/\mu)) - F(w(s+ 1/\mu))| = |F(w(t)) - F(w(s))|.
\]  
Combining these two observations, we have
\begin{align*} 
|F(w) - F(w')| &\simeq |F(w)-F(w'')| + |F(w') - F(w'')|\\ 
&= \left|F\left(w\left(\tau - \tfrac{n-1}{\mu} \right)\right) - F\left(w\left(\tfrac{1}{\mu} \right)\right) \right| 
	+ \left|F\left(w\left(\tau' - \tfrac{n}{\mu} \right)\right) - F\left(w\left( 0 \right)\right) \right|\\
&= \frac{\kappa}{\mu} \left |\Phi\left(   \mu \tau - (n-1) \right) - \Phi\left(  1 \right) \right | 
	+ \frac{\kappa}{\mu} \left |\Phi\left(   \mu \tau' - n \right) - \Phi\left(  0 \right) \right |\\
&\simeq \frac{\kappa}{\mu} \left( \left|n- \mu \tau\right|^{1/2} + \left| \mu \tau' - n \right|^{1/2} \right )\\
&\simeq \lambda|\tau-\tau'|^{1/2}. \qedhere
\end{align*}
\end{proof}

\subsection{Planes in the Heisenberg group}\label{sec:planes}

We now show the second part of Theorem \ref{thm:line-plane} which we restate in the following proposition.

\begin{prop}\label{thm:planes}
There exists a universal $L>1$ such that, for any plane $P$ in $\R^3$, there exists an $L$-bi-Lipschitz embedding of $(P,d_{\bH})$ into $\R^4$.
\end{prop}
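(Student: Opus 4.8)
The plan is to reduce, via a normal-form analysis analogous to the one used for lines in Proposition \ref{prop}, to a small number of model planes. First I would note that $d_{\bH}$ is invariant under vertical translation, and more importantly that the left translations of $\bH$ and the anisotropic dilations $\delta_r(x,y,z)=(rx,ry,r^2z)$ act on planes, sending planes to planes, with dilations rescaling $d_{\bH}$ by the factor $r$ and hence preserving the bi-Lipschitz embeddability with the same constant. Using these symmetries one should be able to bring an arbitrary plane $P$ into one of the following normalized families: (a) a vertical plane, which after a rotation in the $xy$-variables and a left translation one can take to be $\{y=0\}$, and (b) a non-vertical plane, which up to dilation and left translation one can take to be $\{z = ax+by\}$ for parameters $(a,b)$, and in fact — using the rotational example computation in the Preliminaries, where the horizontal lines in $z=ax+by+c$ all pass through the common projected point $(-2b,2a)$ — one can translate so the plane is $\{z=0\}$, or at worst a one-parameter family; the degenerate characteristic-point structure of these planes is exactly that of the model $\{z=0\}$.

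Next I would build the embedding on each model. For the vertical plane $\{y=0\}$: by the example in the Preliminaries, the horizontal curves are the lines $t\mapsto(t, 0, C_1)$ (which the $d_{\bH}$-metric sees linearly in $t$) and the vertical lines $t\mapsto(x_0,0,t)$ (which $d_{\bH}$ sees as snowflaked, $|t|^{1/2}$). Since off the horizontal direction the metric is essentially a warped product of $(\R,|\cdot|)$ in the $x$-direction with $(\R,|\cdot|^{1/2})$ in the $z$-direction — precisely, $d_{\bH}((x,0,z),(x',0,z'))\simeq |x-x'| + |z-z'|^{1/2}$ after absorbing the bilinear term, which is small relative to the dominant terms on bounded pieces — I would embed by $(x,0,z)\mapsto (x,\Phi(z))\in\R\times\R^3 = \R^4$, using the snowflake embedding $\Phi$ of $(\R,|\cdot|^{1/2})$ from the Preliminaries; the product of a bi-Lipschitz map and a bi-Lipschitz map into $\R\times\R^3$ is bi-Lipschitz with universal constant. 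Care is needed because the cross term $\tfrac12(xy'-x'y)$ in $d_{\bH}$ does not literally vanish on $\{y=0\}$ — wait, it does vanish there, so this model is clean; the genuinely warped case is the plane $\{z=0\}$, where $d_{\bH}((x,y,0),(x',y',0)) = |x-x'|+|y-y'| + \tfrac12|xy'-x'y|^{1/2}$, and here I would use polar-type coordinates centered at the characteristic point (the origin): radial directions are snowflaked circles $(\S^1,|\cdot|^{1/2})$ of varying radius, angular directions are horizontal lines. The natural embedding is $(x,y,0)\mapsto\bigl(r\,\phi(\theta)\bigr)$ suitably rescaled, i.e. sending the circle of radius $r$ to a rescaled copy of $\phi(\S^1)\subset\R^3$, plus one more coordinate recording $r$ linearly — again landing in $\R^3\times\R=\R^4$ — and checking that the metric $d_{\bH}\simeq |r-r'| + \max(r,r')^{1/2}|\theta-\theta'|^{1/2} + \dots$ (using Lemma \ref{lem:comparison} and Lemma \ref{lem:horiz} to control the interaction terms) is comparable to the Euclidean distance of the images.

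The main obstacle I anticipate is the \emph{gluing of scales}, exactly as in Case 3 of the line argument: the snowflake embedding $\Phi$ (or $\phi$) has bounded image, so to embed the full unbounded plane one must tile it into dyadic annular regions and translate/rescale copies of the model embedding on each tile, then verify that points in far-apart tiles are separated in $\R^4$ — this is where property (3) of $\Phi$ (that $\Phi(t)$ stays far from $\partial[0,1]^3$ unless $t$ is near an endpoint) is essential, letting the pieces be welded without collapsing distances. Concretely I expect a case analysis (both points in one tile; points in adjacent tiles; points separated by $\geq 2$ tiles) mirroring Cases 3.1–3.3 above, with the extra bookkeeping of the second ($x$-, resp. radial) coordinate, which is globally defined and linear and therefore handles the "long-range" separation automatically. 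The characteristic point of $\{z=0\}$ sits at $r=0$, where all the rescaled snowflake circles degenerate to a point; continuity and the bi-Lipschitz estimate there follow because the $r$-coordinate is honest and the circle of radius $r$ has image of Euclidean diameter $\simeq r^{1/2}$, matching $d_{\bH}$-diameter $\simeq r^{1/2}$. Finally, I would check the universal plane $\{z=ax+by\}$ reduces to the $\{z=0\}$ model by the shear $(x,y,z)\mapsto(x,y,z-ax-by)$, which is a group-theoretic change of coordinates distorting $d_{\bH}$ by a factor depending only on $(a,b)$ on bounded sets — but since bounded sets suffice and dilations let us normalize $(a,b)$ to the unit sphere, the constant stays universal.
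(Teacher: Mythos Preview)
Your high-level strategy matches the paper's: reduce to the two models (a vertical plane and $\{z=0\}$) and embed each into $\R^4$ using the snowflake maps $\Phi$ and $\phi$. For the vertical model the paper uses exactly your map $(x,0,z)\mapsto(x,\Phi(z))$ (more generally $(x,bx+c,z)\mapsto(x,\Phi(z-\tfrac{c}{2}x))$), and for $\{z=0\}$ it uses exactly your polar map $F(ts,0)=(t\phi(s),t)$. The reduction of a general non-vertical plane to $\{z=0\}$ is done by a single left translation (sending the projected point $(-2b,2a)$ to the origin), which is a genuine $d_{\bH}$-isometry; your first paragraph already says this, so your closing fallback via the Euclidean shear $(x,y,z)\mapsto(x,y,z-ax-by)$ is both unnecessary and wrong --- that shear has unbounded distortion on $(\bH,d_{\bH})$, and ``bounded sets suffice'' is false since the proposition asks for a global embedding of the whole plane.

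Two concrete errors in the $\{z=0\}$ analysis. First, there is no ``gluing of scales'' obstacle: the maps $(x,\Phi(z))$ and $(t\phi(s),t)$ are globally defined (recall $\Phi:\R\to\R^3$, not $[0,1]\to\R^3$), so no tiling or case analysis \`a la Case~3 of the line proof is needed --- the paper just checks the bi-Lipschitz inequality directly. Second, your metric formula is off: the paper computes (Lemma~\ref{lem:z=0})
\[
d_{\bH}((t_1s_1,0),(t_2s_2,0)) \simeq |t_1-t_2| + (t_1\wedge t_2)\,|s_1-s_2|^{1/2},
\]
with the \emph{minimum} radius and a linear (not square-root) prefactor; correspondingly the circle of $d_{\bH}$-radius $t$ has both $d_{\bH}$-diameter and image diameter $\simeq t$, not $t^{1/2}$. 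Once you have this formula, matching it to $|(t_1\phi(s_1),t_1)-(t_2\phi(s_2),t_2)|$ is a two-line computation (Lemma~\ref{lem:snowcone}) and the proof is complete.
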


For the proof of Proposition \ref{thm:planes} we distinguish two cases.

\subsubsection{Planes parallel to $z$-axis} Suppose that 
\[ P = \{ (x,y,z)\in \R^3 : y = bx + c\}\] 
for some $b,c \in \R$. Without loss of generality we may assume that $|b|<1$. Otherwise we consider the plane $P' = \{ (x,y,z)\in \R^3 : x = b^{-1}y + b^{-1}c\}$ and the proof is similar. Define the map $f: P \to \R^4$ with
\[ f(x,bx+c,z) = \left(x,\Phi \left(z-\tfrac{c}{2}x \right) \right).\]
Then, if $w=(x,bx+c,z)$ and $w'=(x',bx'+c,z')$ we have
\begin{align*}
|f(w) - f(w')| &\lesssim |x-x'| + \left|\Phi \left(z -\tfrac{c}{2}x \right) - \Phi \left(z' -\tfrac{c}{2}x' \right) \right|\\
                   &\leq |x-x'| + b|x-x'| + L_{\Phi} \left|(z-z') -\tfrac{c}{2}(x-x') \right|^{\frac12} \\
                   &= L_{\Phi} d_{\bH}(w,w').
\end{align*}
Similarly we obtain
\[|f(w)-f(w')| \geq (L_{\Phi}|b| + L_{\Phi})^{-1} d_{\bH}(w,w') \geq (2L_{\Phi})^{-1} d_{\bH}(w,w').\]

\subsubsection{Planes not parallel to $z$-axis}\label{sec:z=0} Suppose that 
\[ P = \{ (x,y,z)\in \R^3 : z = ax + by + c\}\] 
for some $a,b,c \in \R$. 

Define first $P_0 = \{(x,y,z)\in \R^3 : z=0\}$ and $g: P_0 \to P$ with 
\[ g(x,y,0) = (x-2b,y+2a,a(x-2b)+b(y+2a) + c).\]
Note that $g$ is an isometry with respect to $d_{\bH}$.
Indeed, if $w=(x,y,0)$ and $w'=(x',y',0)$ then
\[ d_{\bH}(g(w),g(w')) = |x-x'| + |y-y'| + \left|\tfrac{1}{2}(x'y-xy') \right|^{1/2} = d_{\bH}(w,w').\]
Therefore, it suffices to prove Proposition~\ref{thm:planes} for the plane $P_0$.

Given a point $w\in\R^2$ there exist $t \geq 0$ and $s \in \S^1$ such that $w = ts$; if $w\neq (0,0)$, then $t>0$ and $s$ are unique. 
Define now the embedding $F:P_0 \to \R^4$ by $F(ts,0) = (t \, \phi(s),t)$
where $\phi$ is defined as in \eqref{snowflake2}. 
The following two simple lemmas imply that $F$ is bi-Lipschitz.

\begin{lem}\label{lem:z=0}
For each $i\in\{1,2\}$ let $s_i \in \S^1$, $t_i\geq 0$ and $w_i=(s_it_i,0) \in \R^3$. Then,
\[ d_{\mathbb{H}}(w_1,w_2) \simeq |t_1-t_2| + (t_1\wedge t_2)|s_1-s_2|^{1/2}.\]
\end{lem}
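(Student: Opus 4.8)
The plan is to compute $d_{\bH}(w_1,w_2)$ directly from the defining formula and then compare it to the stated quantity. Write $w_i=(s_it_i,0)$ with $s_i=(\cos\theta_i,\sin\theta_i)$. Since the $z$-coordinates vanish, the expression \eqref{eq:comparison} reduces to
\[ d_{\bH}(w_1,w_2) = t_1s_1^{(1)}-t_2s_2^{(1)} \text{ and } t_1s_1^{(2)}-t_2s_2^{(2)} \text{ terms, plus } \left|\tfrac12\big(t_2s_2^{(1)}\cdot t_1s_1^{(2)} - t_1s_1^{(1)}\cdot t_2s_2^{(2)}\big)\right|^{1/2}. \]
The cross term is $\tfrac12 t_1t_2\,|s_1^{(1)}s_2^{(2)}-s_2^{(1)}s_1^{(2)}| = \tfrac12 t_1t_2\,|\sin(\theta_1-\theta_2)|$, so one gets exactly
\[ d_{\bH}(w_1,w_2) = |t_1s_1-t_2s_2|_{\ell^1} + \sqrt{\tfrac12\,t_1t_2\,|\sin(\theta_1-\theta_2)|}, \]
where $|\cdot|_{\ell^1}$ denotes the $\ell^1$ norm on $\R^2$ (which is comparable to the Euclidean norm, so I will freely switch). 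The goal is thus to show this is comparable to $|t_1-t_2| + (t_1\wedge t_2)|s_1-s_2|^{1/2}$.

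For the first summand, I would use the elementary polar-coordinate estimate $|t_1s_1-t_2s_2| \simeq |t_1-t_2| + (t_1\wedge t_2)|s_1-s_2|$, which follows by writing $t_1s_1-t_2s_2 = (t_1-t_2)s_1 + t_2(s_1-s_2)$ for the upper bound, and for the lower bound splitting into the case $|t_1-t_2|\ge \tfrac12(t_1\wedge t_2)|s_1-s_2|$ (where the radial part dominates) versus the complementary case (where the two points lie in a sector of small angle and the chord $|s_1-s_2|$ is realized up to a constant). For the cross term, note $|\sin(\theta_1-\theta_2)| \simeq \min\{1,|\theta_1-\theta_2|\} \simeq |s_1-s_2|$ since $s_1,s_2\in\S^1$, and $t_1t_2 \simeq (t_1\wedge t_2)(t_1\vee t_2)$. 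Hence the cross term is $\simeq \big((t_1\wedge t_2)(t_1\vee t_2)|s_1-s_2|\big)^{1/2}$. Combining, $d_{\bH}(w_1,w_2) \simeq |t_1-t_2| + (t_1\wedge t_2)|s_1-s_2| + \big((t_1\wedge t_2)(t_1\vee t_2)|s_1-s_2|\big)^{1/2}$.

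It remains the purely arithmetic step of showing this three-term sum is comparable to $A := |t_1-t_2| + (t_1\wedge t_2)|s_1-s_2|^{1/2}$. For the upper bound of the cross term by $A$: if $t_1\vee t_2 \le t_1\wedge t_2 + |t_1-t_2|$ and $|s_1-s_2|\le \pi^2$ are used, then $\big((t_1\wedge t_2)(t_1\vee t_2)|s_1-s_2|\big)^{1/2} \lesssim (t_1\wedge t_2)|s_1-s_2|^{1/2} + \big(|t_1-t_2|(t_1\wedge t_2)|s_1-s_2|\big)^{1/2} \lesssim (t_1\wedge t_2)|s_1-s_2|^{1/2} + |t_1-t_2| + (t_1\wedge t_2)|s_1-s_2|$ by AM–GM, and the last term is $\lesssim (t_1\wedge t_2)|s_1-s_2|^{1/2}$ since $|s_1-s_2|\lesssim 1$. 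The middle term $(t_1\wedge t_2)|s_1-s_2|$ is likewise dominated by $(t_1\wedge t_2)|s_1-s_2|^{1/2}$. Conversely $A\lesssim d_{\bH}(w_1,w_2)$: the $|t_1-t_2|$ part is already a summand, and $(t_1\wedge t_2)|s_1-s_2|^{1/2} \le \big((t_1\wedge t_2)(t_1\vee t_2)|s_1-s_2|\big)^{1/2}$ is bounded by the cross term. I do not anticipate a genuine obstacle here; the only mildly delicate point is the lower bound $|t_1s_1-t_2s_2|\gtrsim |t_1-t_2| + (t_1\wedge t_2)|s_1-s_2|$ when the angle is small, which one handles by projecting onto the direction of $s_1$ (controls $|t_1-t_2|$) and onto $s_1^\perp$ (gives $t_2|\sin(\theta_1-\theta_2)|\simeq (t_1\wedge t_2)|s_1-s_2|$ after noting $t_2\simeq t_1\wedge t_2$ in that regime). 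All constants are absolute, so the comparison is universal as claimed.
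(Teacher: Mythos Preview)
Your approach is essentially the paper's: both compute $d_{\bH}(w_1,w_2)$ directly, arrive at $|t_1s_1-t_2s_2| + (t_1t_2|\sin(\theta_1-\theta_2)|)^{1/2}$, and then compare by elementary case analysis. One small slip to flag: the assertion $|\sin(\theta_1-\theta_2)| \simeq |s_1-s_2|$ fails near antipodal points (where $|\sin(\theta_1-\theta_2)|\to 0$ but $|s_1-s_2|\to 2$), so your lower bound via the cross term alone is not valid there; however in that regime $|s_1-s_2|\simeq 1$ and the term $(t_1\wedge t_2)|s_1-s_2|$ coming from $|t_1s_1-t_2s_2|$ already dominates $(t_1\wedge t_2)|s_1-s_2|^{1/2}$, so the argument is easily repaired---the paper handles this by explicitly separating the case $|\theta_1-\theta_2|>\pi/2$.
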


\begin{proof}
Without loss of generality assume that $t_1\leq t_2$. For $i=1,2$ write $s_i=(\cos(\theta_i),\sin(\theta_i))$ for some $\theta_i \in [0,2\pi)$. Then,
\begin{align*}
d_{\mathbb{H}}(w_1,w_2) &\simeq |s_1t_1 - s_2t_2| + (t_1t_2)^{1/2}|\sin(\theta_1 - \theta_2)|^{1/2}\\
&\simeq (t_2-t_1) + t_1|s_1-s_2| + (t_1t_2)^{1/2}|\sin(\theta_1 - \theta_2)|^{1/2}.
\end{align*}

If $t_2\geq 2t_1$, then $ t_1 \leq (t_2-t_1)$, so $(t_1t_2)^{1/2} \lesssim (t_2-t_1)$ and the claim holds.

If $t_2< 2t_1$ and $(\theta_1 - \theta_2) > \pi/2$, then $|s_1-s_2|\simeq 1$, $t_1\simeq t_2$ and the claim holds.

If $t_2< 2t_1$ and $(\theta_1 - \theta_2) \leq \pi/2$, then $|s_1-s_2|\simeq \sin(\theta_1 - \theta_2)$, $t_1\simeq t_2$, and $|s_1-s_2| \lesssim |s_1-s_2|^{1/2}$, and so the claim holds.
\end{proof}

\begin{lem}\label{lem:snowcone}
If $t_1,t_2 \geq 0$ and $s_1,s_2\in \S^1$ then
\begin{align*}
|(t_1\phi(s_1),t_1) - (t_2\phi(s_2),t_2)|  \simeq |t_1-t_2| + (t_1\wedge t_2)|s_1-s_2|^{1/2} 
\end{align*}
\end{lem}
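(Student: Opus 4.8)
The plan is to reduce the estimate for the ``snow-cone'' $\{(t\phi(s),t): t\ge 0,\ s\in\S^1\}\subset\R^4$ to the snowflake bound \eqref{snowflake2} for $\phi$ together with elementary Euclidean estimates, mimicking the case analysis already used in Lemma~\ref{lem:z=0}. Without loss of generality take $t_1\le t_2$. The last coordinate alone gives $|(t_1\phi(s_1),t_1)-(t_2\phi(s_2),t_2)|\ge |t_1-t_2|$, so the lower bound for the $|t_1-t_2|$ term is free, and the triangle inequality plus boundedness of $\phi$ on $\S^1$ (it is $L_\Phi$-bi-Lipschitz from $(\S^1,|\cdot|^{1/2})$, hence has image of diameter $\lesssim 1$) gives the matching upper bound $\lesssim |t_1-t_2|$ from that component. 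So everything reduces to controlling $|t_1\phi(s_1)-t_2\phi(s_2)|$ against $|t_1-t_2|+(t_1\wedge t_2)|s_1-s_2|^{1/2}=|t_1-t_2|+t_1|s_1-s_2|^{1/2}$.

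First I would write $t_1\phi(s_1)-t_2\phi(s_2)=t_1(\phi(s_1)-\phi(s_2))+(t_1-t_2)\phi(s_2)$. For the upper bound, applying \eqref{snowflake2} and $|\phi(s_2)|\lesssim 1$ gives $|t_1\phi(s_1)-t_2\phi(s_2)|\le t_1 L_\Phi|s_1-s_2|^{1/2}+|t_1-t_2|\,|\phi(s_2)|\lesssim t_1|s_1-s_2|^{1/2}+|t_1-t_2|$, which is exactly the claimed right-hand side; combining with the trivial bound on the last coordinate finishes the $\lesssim$ direction. For the lower bound I would split into the same three regimes as in Lemma~\ref{lem:z=0}. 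If $t_2\ge 2t_1$, then $t_1\le t_2-t_1\le|(t_1\phi(s_1),t_1)-(t_2\phi(s_2),t_2)|$ and also $t_1|s_1-s_2|^{1/2}\lesssim t_1\lesssim|t_1-t_2|\le|(\cdot)-(\cdot)|$, so the whole right side is $\lesssim$ the distance. If $t_2<2t_1$ (so $t_1\simeq t_2$) I would use the reverse triangle inequality: $|t_1\phi(s_1)-t_2\phi(s_2)|\ge t_1|\phi(s_1)-\phi(s_2)|-|t_1-t_2|\,|\phi(s_2)|\ge t_1 L_\Phi^{-1}|s_1-s_2|^{1/2}-C|t_1-t_2|$. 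When $|t_1-t_2|$ is, say, at most $\tfrac{1}{2C}t_1 L_\Phi^{-1}|s_1-s_2|^{1/2}$ this already dominates $\gtrsim t_1|s_1-s_2|^{1/2}$, and otherwise $|t_1-t_2|\gtrsim t_1|s_1-s_2|^{1/2}$ and the last coordinate handles it; either way $|t_1-t_2|+t_1|s_1-s_2|^{1/2}\lesssim|(t_1\phi(s_1),t_1)-(t_2\phi(s_2),t_2)|$.

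The only mildly delicate point — the analogue of the ``main obstacle'' — is the interaction term in the regime $t_1\simeq t_2$ with $|s_1-s_2|$ small: the cross term $(t_1-t_2)\phi(s_2)$ can partially cancel $t_1(\phi(s_1)-\phi(s_2))$ in the $\R^3$-component, which is precisely why one must keep the last coordinate $t$ in the target and play the two contributions off against each other via the case split on the size of $|t_1-t_2|$ relative to $t_1|s_1-s_2|^{1/2}$. This is the same mechanism by which the factor $|s_1-s_2|\lesssim|s_1-s_2|^{1/2}$ was used at the end of Lemma~\ref{lem:z=0}, and no genuinely new idea is needed; it is a short case analysis. Together with Lemma~\ref{lem:z=0} this shows $F(ts,0)=(t\phi(s),t)$ is bi-Lipschitz from $(P_0,d_{\bH})$ into $\R^4$ with universal constant, completing the proof of Proposition~\ref{thm:planes}.
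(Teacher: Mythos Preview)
Your argument is correct and follows essentially the same approach as the paper: separate off the last coordinate to get $|t_1-t_2|$, then control $|t_1\phi(s_1)-t_2\phi(s_2)|$ via the triangle inequality using boundedness of $\phi$ and \eqref{snowflake2}. The paper compresses your case analysis into the single line $|t_1-t_2|+|t_1\phi(s_1)-t_2\phi(s_2)|\simeq |t_1-t_2|+(t_1\wedge t_2)|\phi(s_1)-\phi(s_2)|$, which is exactly the content of your reverse-triangle-inequality splitting, so no separate case distinction is needed.
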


\begin{proof}
Indeed,
\begin{align*}
|(t_1\phi(s_1),t_1) - (t_2\phi(s_2),t_2)|  
&\simeq |t_1-t_2| + |t_1\phi(s_1) - t_2\phi(s_2)|\\
&\simeq |t_1-t_2| + (t_1\wedge t_2)|\phi(s_1) - \phi(s_2)|\\
&\simeq |t_1-t_2| + (t_1\wedge t_2)|s_1-s_2|^{1/2}. \qedhere
\end{align*}
\end{proof}
%Moreover, $ |(t\phi(p),t) - (t'\phi(p'),t')| \simeq t\vee t' $ if $ t\vee t' \geq \frac12 t\wedge t'$

\subsection{Simplicial complexes}\label{sec:complexes}

Recall that a $0$-simplex is a point of $\R^3$, a $1$-simplex $E\subset \R^3$ is the convex hull of two distinct points and a $2$-simplex $E\subset \R^3$ is the convex hull of three distinct points in $\R^3$ that do not lie on the same line.  The boundary of a $k$-simplex consists of $j$-faces ($j\leq k-1$) which are $j$-simplices.

A (compact) simplicial $2$-complex $K\subset \R^3$ is a finite union of simplices $S_1\cup\dots\cup S_k$ with the following properties
\begin{enumerate}
\item each $S_i$ is a $k_i$-simplex for some $k_1\in\{0,1,2\}$;
%\item every face of a simplex in $\mathcal{K}$ is contained in $\mathcal{K}$; 
\item for each $i,j \in \{1,\dots,n\}$, either $S_i \cap S_j$ is empty, or $S_i \cap S_j$ is a common edge of $S_i$ and $S_j$, or $S_i=S_j$.
\end{enumerate} 

\begin{thm}\label{thm2}
%Let $\mathcal{C}\subset \R^3$ be a finite simplicial complex. Then, there exists a bi-Lipschitz embedding $f: (\mathcal{C},d_K) \to \R^4$.
Let $K\subset \R^3$ be a finite union $K = D_1\cup\cdots\cup D_k$ where each $D_i$ is a simplicial $2$-complex. Then, there exists $N\in\N$ and a bi-Lipschitz embedding of 
$(K,d_{\bH})$ into $\R^N$.
\end{thm}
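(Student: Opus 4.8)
The plan is to reduce the case of a general simplicial $2$-complex to the already-established cases of lines (Proposition~\ref{prop}) and planes (Proposition~\ref{thm:planes}) by combining bi-Lipschitz embeddings of the individual simplices along their common faces. First I would observe that each simplicial $2$-complex $D_i$ is a finite union of closed $2$-simplices (and lower-dimensional simplices, which are handled a fortiori by restricting the $2$-simplex embeddings or by the line case). Each closed $2$-simplex is a compact subset of an affine plane $P \subset \R^3$, so by Proposition~\ref{thm:planes} it admits an $L$-bi-Lipschitz embedding of $(\text{simplex}, d_{\bH})$ into $\R^4$ with $L$ universal; similarly each $1$-simplex $L$-bi-Lipschitz embeds into $\R^3$ by Proposition~\ref{prop}. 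The key point is that a compact subset $A$ of $(\bH, d_{\bH})$ with a bi-Lipschitz embedding into some $\R^m$ also admits one with controlled \emph{diameter} and we can translate/rescale freely in the target, so the embeddings of the pieces can be prepared to be compatible.

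The main tool for gluing is the bi-Lipschitz welding theorem of Makarychev--Makarychev \cite{Makarychev} cited in the introduction: if two metric spaces each bi-Lipschitz embed into a Euclidean space and their intersection (a common closed subset) is ``nice'' — here each intersection $S_i \cap S_j$ is a single common edge, i.e.\ a $1$-simplex or a point — then the union bi-Lipschitz embeds into a Euclidean space of possibly larger dimension. Concretely, I would proceed inductively on the number of simplices. Enumerate the simplices $S_1, \dots, S_n$ of $K$ so that each $S_{m+1}$ meets $K_m := S_1 \cup \dots \cup S_m$ in a subcomplex of its boundary (this is possible since $K$ is a finite complex; if $K$ is disconnected, handle each component and then place the components far apart in a common $\R^N$, taking a direct sum / using extra coordinates to separate them). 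Assuming $(K_m, d_{\bH})$ bi-Lipschitz embeds into some $\R^{N_m}$, and $(S_{m+1}, d_{\bH})$ bi-Lipschitz embeds into $\R^4$, with $K_m \cap S_{m+1}$ a common face $F$ on which both embeddings are defined and bi-Lipschitz, the welding theorem produces a bi-Lipschitz embedding of $(K_{m+1}, d_{\bH})$ into $\R^{N_{m+1}}$ for some $N_{m+1}$ depending only on $N_m$ and $4$. After $n$ steps this gives the desired $N \in \N$ and the embedding of $(K, d_{\bH})$.

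A few technical points need care. First, the welding theorem requires the two embedded copies to agree (be isometric, or at least bi-Lipschitz equivalent with uniformly controlled constants) on the overlap $F$; since $F$ is itself a simplex, the restrictions of the two embeddings to $(F, d_{\bH})$ are both bi-Lipschitz to the ``model'' embedding of $F$ (a line segment or point), and one composes with the transition map, which is bi-Lipschitz on the model — this is exactly the situation the welding theorem is designed for, so it applies. Second, one must check that the distance $d_{\bH}$ restricted to a sub-simplex of $S_{m+1}$ or $K_m$ is comparable to the ambient $d_{\bH}$ on $K_{m+1}$, but this is immediate since $d_{\bH}$ is a fixed function on $\R^3 \times \R^3$ and all sets in play are subsets of $\R^3$; there is no issue of intrinsic versus extrinsic metrics because we always use the restriction of $d_{\bH}$. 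Third, since $d_{\bH}$ is only a quasi-metric (the triangle inequality holds up to a universal multiplicative constant), I should either pass to $d_K$ — which is a genuine metric bi-Lipschitz equivalent to $d_{\bH}$, as recorded in Section~\ref{sec:prelim} — before invoking the welding theorem, or note that the welding theorem (or its proof) tolerates quasi-metrics with uniform constants; the cleanest route is to state and prove everything for $d_K$ and observe at the end that bi-Lipschitz equivalence of $d_K$ and $d_{\bH}$ transfers the conclusion.

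The hard part will not be any single estimate but rather correctly invoking the welding machinery: verifying the precise hypotheses of the Makarychev--Makarychev theorem (closedness of the overlap, the required compatibility of the two embeddings on the overlap, and the fact that the dimension of the target stays finite through the induction), and organizing the induction over the poset of simplices so that each newly added simplex meets the previously built complex in a single face (a point or segment) rather than in a more complicated set — this is where the specific combinatorial structure of a simplicial complex, property (2) in the definition above, is essential, since it guarantees the overlaps are themselves simplices and hence bi-Lipschitz embeddable with the model structure needed for welding.
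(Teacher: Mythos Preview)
Your approach is essentially the paper's: embed each simplex into $\R^4$ via Proposition~\ref{thm:planes} and then glue using the Makarychev--Makarychev welding theorem (Lemma~\ref{lem:welding}). However, you have misread the hypotheses of that theorem, and this inflates what you call ``the hard part'' into something that does not exist. Lemma~\ref{lem:welding} requires only that $X_1,X_2$ be \emph{closed} subsets of a metric space $X$ with $X=X_1\cup X_2$ and that each $X_i$ bi-Lipschitz embed into some Euclidean space; there is \emph{no} hypothesis on the structure of $X_1\cap X_2$, and in particular no requirement that the two given embeddings agree or be compatible on the overlap. Consequently there is no need to order the simplices so that each new one meets the previous union in a single face, no need to check that restrictions to common faces match a ``model'' embedding, and no role for property~(2) in the definition of a simplicial complex beyond ensuring the pieces are closed. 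The paper simply applies Lemma~\ref{lem:welding} $n$ times to $S_1,\dots,S_n$ in any order, obtaining an embedding into $\R^{5n-1}$; the finite union $K=D_1\cup\cdots\cup D_k$ is handled by one more layer of the same lemma.

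Your remark about passing to the genuine metric $d_K$ before invoking the welding theorem is correct and is the clean way to proceed.
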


For the proof of Theorem \ref{thm2} we use the following bi-Lipschitz welding theorem.

\begin{lem}[{\cite[Theorem 1.1]{Makarychev}}]\label{lem:welding}
Let $X$ be a metric space and let $X_1,X_2 \subset X$ be closed subsets such that $X=X_1\cup X_2$. If $X_1$ bi-Lipschitz embeds in $\R^{n}$ and $X_2$ bi-Lipschitz embeds in $\R^{m}$ then $X$ bi-Lipschitz embeds in $\R^{n+m+1}$.
\end{lem}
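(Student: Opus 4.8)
The plan is to build the embedding $F\colon X\to\R^{n+m+1}$ by gluing Lipschitz extensions of the two given embeddings together with a single extra coordinate that records which of the two closed pieces a point lies in. Write $f_1\colon X_1\to\R^n$ and $f_2\colon X_2\to\R^m$ for the given bi-Lipschitz embeddings; after rescaling we may assume both are $1$-Lipschitz and satisfy $|f_i(a)-f_i(b)|\ge D^{-1}d(a,b)$ for a common distortion $D\ge 1$. First I would use a Lipschitz extension theorem (Kirszbraun's theorem for Euclidean targets, or McShane's theorem applied coordinatewise at the cost of a harmless dimensional factor) to extend $f_1$ and $f_2$ to Lipschitz maps $\hat f_1\colon X\to\R^n$ and $\hat f_2\colon X\to\R^m$ defined on all of $X$; the only feature that matters is that $\hat f_i$ agrees with $f_i$ on $X_i$. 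For the extra coordinate I would take $w(x)=\dist(x,X_1)-\dist(x,X_2)$, which is $2$-Lipschitz, vanishes on $X_1\cap X_2$, equals $-\dist(x,X_2)\le 0$ on $X_1$, and equals $\dist(x,X_1)\ge 0$ on $X_2$; here I use that $X=X_1\cup X_2$ forces at least one of the two distances to vanish at every point. Then I would set $F(x)=(\hat f_1(x),\hat f_2(x),w(x))$.

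The upper Lipschitz bound for $F$ is immediate since each block is Lipschitz. For the lower bound I would split into cases according to where $x$ and $y$ lie. If both lie in $X_1$ (resp.\ both in $X_2$) then $\hat f_1=f_1$ there (resp.\ $\hat f_2=f_2$), so the first (resp.\ second) block of $F$ already separates them by $D^{-1}d(x,y)$. The only genuinely new case is $x\in X_1\setminus X_2$ and $y\in X_2\setminus X_1$, and this is where the argument has content. Here I would estimate the first block as follows: choosing $y'\in X_1$ almost realizing $\dist(y,X_1)$, the map $\hat f_1$ moves $y$ to within $\dist(y,X_1)$ of $\hat f_1(y')$, while $|\hat f_1(x)-\hat f_1(y')|=|f_1(x)-f_1(y')|\ge D^{-1}|x-y'|\ge D^{-1}(d(x,y)-\dist(y,X_1))$; the triangle inequality then gives $|\hat f_1(x)-\hat f_1(y)|\ge D^{-1}d(x,y)-2\dist(y,X_1)$, and symmetrically $|\hat f_2(x)-\hat f_2(y)|\ge D^{-1}d(x,y)-2\dist(x,X_2)$. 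Meanwhile $|w(x)-w(y)|=\dist(y,X_1)+\dist(x,X_2)$ exactly. A short dichotomy finishes the estimate: if $\dist(y,X_1)+\dist(x,X_2)\ge d(x,y)/(4D)$ then the last coordinate separates $x$ and $y$ by $d(x,y)/(4D)$; otherwise $\dist(y,X_1)<d(x,y)/(4D)$, so the first block separates them by at least $d(x,y)/(2D)$. Combining all cases, $F$ is bi-Lipschitz, with distortion $O(D)$.

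The main obstacle is the mixed case, and within it the observation that the error incurred by replacing $y\in X_2$ with a nearby point of $X_1$ — the step that lets us invoke the genuine bi-Lipschitz estimate for $f_1$ — is controlled precisely by $\dist(y,X_1)$, which is exactly the quantity detected by the extra coordinate $w$. So the single added dimension does real work: it is there to separate exactly those pairs that the two Lipschitz extensions may fail to separate, namely pairs straddling $X_1$ and $X_2$ that are close to $X_1\cap X_2$. I do not anticipate difficulty in the finite-dimensional bookkeeping: Kirszbraun extension preserves both the dimension and the Lipschitz constant, and if one prefers to avoid it, McShane's coordinatewise extension only inflates the distortion by a factor $\sqrt{n}$ or $\sqrt{m}$, which does not affect the statement.
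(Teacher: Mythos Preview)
Your argument is correct. The paper does not supply its own proof of this lemma; it is quoted as Theorem~1.1 of \cite{Makarychev} and used as a black box. The construction you give --- extend each $f_i$ Lipschitz-ly to all of $X$, adjoin the single coordinate $w(x)=\dist(x,X_1)-\dist(x,X_2)$, and run the dichotomy in the mixed case --- is the standard proof of this welding result.

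One small correction: Kirszbraun's theorem requires a Hilbert-space \emph{domain}, and $X$ here is an arbitrary metric space, so Kirszbraun is not available. You should commit to the coordinatewise McShane extension you already mention as the alternative; as you observe, the resulting factor of $\sqrt{n}$ (resp.\ $\sqrt{m}$) in the Lipschitz constant of $\hat f_1$ (resp.\ $\hat f_2$) only shifts the threshold in your dichotomy and is absorbed into the final distortion.
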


We now show Theorem \ref{thm2}.

\begin{proof}[{Proof of Theorem \ref{thm2}}]
By Lemma \ref{lem:welding} we only need to show that a simplicial $2$-complex admits a bi-Lipschitz embedding into some $\R^N$. Let $K\subset \R^3$ be a simplicial $2$-complex. Then, $K = \bigcup_{i=1}^n S_i$ where each $S_i$ is a $k_i$-simplex for some $k_i\in\{0,1,2\}$. By Proposition \ref{thm:planes}, each $(S_i,d_{\bH})$ $L$-bi-Lipschitz embeds into $\R^4$ for some universal $L>1$. Applying Lemma \ref{lem:welding} $n$ times we obtain a bi-Lipschitz embedding of $(K,d_{\bH})$ into $\R^{5n-1}$.
\end{proof}

\section{Smooth curves and $\bH$-regular surfaces}\label{sec:smoothreg}

In \textsection\ref{sec:curves} we show the first part of Theorem \ref{thm:curve-surface}, and in \textsection\ref{sec:surfaces} we prove the second part.
For a set $A \subset \mathbb{R}^3$,
we will say that $(A,d_{\mathbb{H}})$ {\em locally} $L$-bi-Lipschitz embeds in $\mathbb{R}^N$ for some $L>1$ if,
for any $p \in A$, there is a neighborhood $U$ of $p$ such that
$(U \cap A,d_{\mathbb{H}})$ $L$-bi-Lipschitz embeds in $\mathbb{R}^N$.

\subsection{Smooth curves}\label{sec:curves}

For the rest of \textsection\ref{sec:curves}, we denote by $I$ a non-degenerate interval of $\R$.
% and a $C^1$ curve
%\[ w(t) = (x(t),y(t),z(t)) \qquad\text{with}\quad t\in I\]
%with unit speed, i.e., $(x'(t))^2+(y'(t))^2+(z'(t))^2 = 1$ for all $t\in I$. 
We prove the following proposition.

\begin{prop}\label{thm:curves}
There is a universal constant $L > 1$ such that,
for any $C^{1,1}$ curve $w:I \to \R^3$ with unit speed,
$(w(I),d_{\mathbb{H}})$ locally $L$-bi-Lipschitz embeds into $\R^4$.
%There exists $N\in\N$ and a bi-Lipschitz embedding $f:(\g,d_K) \to \R^N$.
\end{prop}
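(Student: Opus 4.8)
The plan is to reduce the problem to the already-established case of lines (Proposition~\ref{prop}) by exploiting the $C^{1,1}$ regularity: near a point $p = w(t_0)$, a unit-speed $C^{1,1}$ curve looks, up to second order, like its tangent line, and the error is controlled quadratically. After a left translation (which is a $d_{\mathbb{H}}$-isometry) I may assume $p$ is the origin, and after a rotation about the $z$-axis and a vertical shear — both of which distort $d_{\mathbb{H}}$ by only a universal factor — I may assume the tangent direction $w'(t_0)$ is $e_1 = (1,0,0)$. Write the curve as $w(t) = (t, y(t), z(t))$ for $t$ in a small interval $I_0$ around $t_0$ (reparametrizing by the first coordinate, which is legitimate since $x'(t_0)=1$ and $x'$ is Lipschitz, so $x' \geq 1/2$ nearby). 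Then $y(t_0) = y'(t_0) = 0$, $z(t_0) = 0$, and by the horizontality-independent $C^{1,1}$ bound we have $|y(t)| \lesssim |t-t_0|^2$, $|y'(t)| \lesssim |t-t_0|$, $|z'(t)| \lesssim |t-t_0|$, hence $|z(t)| \lesssim |t-t_0|^2$ (the curve need not be horizontal, so I keep $z$ general). The comparison function $d_{\mathbb{H}}$ on this piece then reads $d_{\mathbb{H}}(w(s),w(t)) = |s-t| + |y(s)-y(t)| + |z(s)-z(t) + \tfrac12(s\,y(t) - t\,y(s))|^{1/2}$.

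The key estimate is that this is comparable, with universal constants, to $|s-t| + |A(s) - A(t)|^{1/2}$ where $A(t) := z(t) - \tfrac12\int_{t_0}^{t} (u\, y'(u) - y(u))\,du$ is a $C^{1,1}$ real-valued function with $A(t_0) = 0$; more precisely I expect $|z(s)-z(t) + \tfrac12(s y(t) - t y(s))| = |A(s) - A(t) + (\text{quadratic error})|$ where the error is $O(|s-t|^2 \cdot \max|t-t_0|)$ and hence absorbable once $I_0$ is small, using that $|s-t| \lesssim |s-t|^{1/2}$ on a bounded set. In other words, on a sufficiently small neighborhood, $(w(I_0), d_{\mathbb{H}})$ is bi-Lipschitz equivalent, with universal constant, to the graph of $A$ with the metric $|s-t| + |A(s)-A(t)|^{1/2}$ — and this is exactly the model of a (possibly non-horizontal) line through the origin with a $C^{1,1}$-perturbed $z$-coordinate. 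Feeding this through the $\Phi$-construction of Proposition~\ref{prop}, verbatim with $t$ replaced by $A(t)$ where needed, produces the $L$-bi-Lipschitz embedding into $\R^4$ (one coordinate for the arclength-like parameter, three for $\Phi$ applied to the rescaled $A$).

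Concretely the steps are: (1) normalize by translation, $z$-axis rotation, and vertical shear, reducing to $w(t) = (t,y(t),z(t))$ with vanishing first-order data at $t_0$ and $C^{1,1}$ bounds; (2) prove the comparison $d_{\mathbb{H}}(w(s),w(t)) \simeq |s-t| + |A(s)-A(t)|^{1/2}$ on a small interval, the quadratic error terms being the only real computation; (3) split into the cases $A \equiv 0$ near $t_0$ (curve is, to this order, horizontal — a line-type argument gives an isometric embedding into $\R$, hence into $\R^4$), $A$ with nonvanishing derivative (a clean snowflake-type piece handled as in Case 3 of Proposition~\ref{prop}), and the general case where one interpolates via $\Phi$ applied to $\mu A(t)$ with $\mu$ chosen from the ratio of the two natural scales, exactly mirroring Cases 3.1–3.3 there; (4) conclude that all constants are universal since every normalization cost and every comparison constant is.

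The main obstacle I anticipate is step (2)–(3): unlike an honest line, here the "snowflake direction" is the nonlinear function $A$, so the periodization trick in Proposition~\ref{prop} (which relied on $t \mapsto \Phi(\mu t - [\mu t])$ being exactly periodic) does not transplant literally — I will instead need to work directly on a single fundamental scale, i.e. choose the neighborhood $I_0$ small enough that $\mu A(I_0)$ has length $\lesssim 1$ so that no periodization is needed and the embedding is just $w(t) \mapsto (t, \tfrac{\kappa}{\mu}\Phi(\mu A(t)))$ for suitable scalars $\kappa,\mu$ depending on the local data; verifying the lower bound of bi-Lipschitzness for this map, where the two terms $|s-t|$ and $|A(s)-A(t)|^{1/2}$ can be of very different sizes, is where care is required, but it is handled by the distortion property (3) of $\Phi$ exactly as in Case~3.3 above.
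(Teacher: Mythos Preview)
Your normalization in step (1) does not work. A ``vertical shear'' $(x,y,z)\mapsto (x,y,z-\alpha x)$ is not a bi-Lipschitz self-map of $(\bH,d_{\bH})$, even locally near the origin: the points $(0,0,0)$ and $(\epsilon,0,0)$ are at $d_{\bH}$-distance $\epsilon$ before the shear and at distance $\epsilon+|\alpha|^{1/2}\epsilon^{1/2}$ after, so the distortion is at least $|\alpha|^{1/2}\epsilon^{-1/2}\to\infty$ as $\epsilon\to 0$. The $\alpha$ you need, after rotating $w'(t_0)$ into the $xz$-plane, is $z'(t_0)/x'(t_0)$; this is not universally bounded, and when $w'(t_0)=(0,0,\pm 1)$ no rotation about the $z$-axis produces any horizontal component at all, so your reparametrization by the first coordinate is impossible from the outset. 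Consequently the claimed bounds $|z'(t)|\lesssim|t-t_0|$ and $|z(t)|\lesssim|t-t_0|^2$ (which presuppose $z'(t_0)=0$) are unavailable, and the comparison with the line model collapses.

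This is not a patchable technicality: the near-vertical and near-horizontal regimes of a unit-speed curve in $\bH$ are genuinely different metrically, and the paper treats them separately. On arcs where $|x'|^2+|y'|^2$ is bounded below, the projection to $\R^2\times\{0\}$ is a $C^{1,1}$ plane curve of definite speed, the original arc sits inside the $\bH$-regular vertical cylinder over that projection, and the embedding comes from the regular-surface result Proposition~\ref{thm:reg}. On arcs where $|x'|^2+|y'|^2$ is small, unit speed forces $|z'|$ close to $1$, a direct computation gives $|2z'+x'y-xy'|\geq 1$, and Lemma~\ref{lem:snow} shows such arcs are locally bi-Lipschitz to a rescaled snowflake $([\tau_1,\tau_2],\lambda|\cdot|^{1/2})$, which embeds in $\R^3$ via $\Phi$ with universal constant. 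Your single normalization tries to force every arc into the first regime, and that is exactly what fails.
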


The idea of the proof is to decompose the curve $\g = w(I)$ into two types of sub-arcs and embed each one in $\R^4$. In the following lemma we consider sub-arcs of $\gamma$ that are ``vertical" enough.

\begin{lem}\label{lem:snow}
Let $w:[0,T] \to \R^3$ be a $C^{1,1}$ smooth curve with unit speed such that $|2z'(t)+x'(t)y(t)-y'(t)x(t)|\geq 1$ for all $t\in[0,T]$. Then, $(w([0,T]),d_{\bH})$ locally $L$-bi-Lipschitz embeds into $\R^3$ for some universal $L>1$.
\end{lem}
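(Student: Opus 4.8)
The hypothesis $|2z'(t)+x'(t)y(t)-y'(t)x(t)| \geq 1$ says that the curve is uniformly transverse to the horizontal distribution: along $w$ the ``vertical displacement'' $z(s)-z(t)+\tfrac12(x(s)y(t)-x(t)y(s))$ grows at unit rate to first order. Since $w$ has unit speed and is $C^{1,1}$, the horizontal part of the increment, $|x(s)-x(t)|+|y(s)-y(t)|$, is $\lesssim |s-t|$, while the vertical term is $\simeq |s-t|$ for $|s-t|$ small (bounded below using the hypothesis, bounded above using $C^{1,1}$ regularity via the estimates already packaged in Lemma \ref{lem:horiz}). Hence for points on a short enough subarc,
\[
d_{\bH}(w(t),w(s)) \simeq |s-t|^{1/2},
\]
i.e.\ the map $t\mapsto w(t)$ from $([t_0-\rho,t_0+\rho],|\cdot|^{1/2})$ to $(w([t_0-\rho,t_0+\rho]),d_{\bH})$ is bi-Lipschitz with universal constants, once $\rho$ is chosen universally (depending only on the $C^{1,1}$ bound, which we may normalize). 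So the first step is to make this comparison precise on a fixed small scale, producing a universal $\rho>0$ and universal constants in the displayed estimate.

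Given that, the embedding into $\R^3$ is immediate: precompose $\Phi$ (the snowflake embedding of $(\R,|\cdot|^{1/2})$ into $\R^3$ from the Preliminaries, with constant $L_\Phi$) with the parametrization. That is, on the subarc $w(J)$, $J=[t_0-\rho,t_0+\rho]$, define $F:w(J)\to\R^3$ by $F(w(t))=\Phi(t)$. Then for $t,s\in J$,
\[
|F(w(t))-F(w(s))| = |\Phi(t)-\Phi(s)| \simeq_{L_\Phi} |t-s|^{1/2} \simeq d_{\bH}(w(t),w(s)),
\]
so $F$ is $L$-bi-Lipschitz with $L$ universal. Since every point of $w([0,T])$ lies in such a subarc, this is exactly the asserted local bi-Lipschitz embedding.

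The one point that needs care — and is the main (minor) obstacle — is the two-sided comparison $d_{\bH}(w(t),w(s))\simeq|s-t|^{1/2}$ on a \emph{universal} scale. The upper bound $d_{\bH}(w(t),w(s))\lesssim |s-t|^{1/2}$ follows from Lemma \ref{lem:horiz} (or a direct computation) using only the unit-speed hypothesis, with a universal constant. For the lower bound one writes, as in \eqref{eq:comparison},
\[
z(s)-z(t)+\tfrac12\bigl(x(s)y(t)-x(t)y(s)\bigr) = \int_t^s \Bigl(z'(u)+\tfrac12 x'(u)y(u)-\tfrac12 x(u)y'(u)\Bigr)\,du + E(t,s),
\]
where the integrand has absolute value $\geq \tfrac12$ by hypothesis and $E(t,s)$ is a quadratic error term controlled by the $C^{1,1}$ modulus and bounded by $\tfrac14|s-t|$ once $|s-t|\leq\rho$ for a suitable universal $\rho$; hence the vertical term is $\geq \tfrac14|s-t|$, giving $d_{\bH}(w(t),w(s))\geq \tfrac12|s-t|^{1/2}$. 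Here we use that a unit-speed $C^{1,1}$ curve can, after a harmless normalization absorbed into the meaning of ``$C^{1,1}$'', be taken to have second-derivative bound $1$, so that $\rho$ and all constants are genuinely universal; this is the same normalization implicitly in force in Proposition \ref{thm:curves}. Combining the two bounds finishes the proof.
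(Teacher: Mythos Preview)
Your overall strategy matches the paper's: show that on short subarcs the parametrization is a bi-Lipschitz equivalence with a snowflaked interval, then compose with $\Phi$. However, there is a genuine gap in your claim that $d_{\bH}(w(t),w(s)) \simeq |s-t|^{1/2}$ with \emph{universal} constants.

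Your lower bound is fine: writing the vertical displacement as
\[
\int_t^s\Bigl(z'(u)+\tfrac12 x'(u)y(u)-\tfrac12 x(u)y'(u)\Bigr)\,du + E(t,s)
\]
with $|E(t,s)|\le|s-t|^2$ (this follows from unit speed alone, via the integration-by-parts in the proof of Lemma~\ref{lem:horiz}; no second-derivative bound is needed), the hypothesis forces the integrand to have absolute value at least $1/2$, so the vertical term is $\ge\tfrac14|s-t|$ once $|s-t|\le 1/4$.

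The problem is the upper bound. The same integrand is bounded above only by $\tfrac12\sup_t|2z'(t)+x'(t)y(t)-y'(t)x(t)|$, and this supremum depends on the \emph{position} of the curve (through $|x|$ and $|y|$), not on any second-derivative norm. No normalization of the $C^{1,1}$ constant removes this: a unit-speed curve with bounded curvature sitting far from the origin will have $|x'y-xy'|$ arbitrarily large. Consequently your map $F(w(t))=\Phi(t)$ is bi-Lipschitz only with constant comparable to $\sup_t|2z'+x'y-y'x|^{1/2}$, which is not universal. (Note also that Lemma~\ref{lem:horiz} itself concerns horizontal curves and does not supply the upper bound you cite.)

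The paper repairs this by introducing the local scale
\[
\lambda=|2z'(t_0)+x'(t_0)y(t_0)-x(t_0)y'(t_0)|^{1/2}
\]
and proving instead that $d_{\bH}(w(t_1),w(t_2))\simeq \lambda|t_1-t_2|^{1/2}$ with universal comparison constants: one first chooses $\e$ (depending on the curve, via uniform continuity of $w'$) so that $2z'+x'y-y'x$ varies by at most a factor of $2$ on intervals of length $\e$, and then the Mean Value Theorem gives both inequalities. The subarc is thus bi-Lipschitz equivalent to $([\tau_1,\tau_2],\lambda|\cdot|^{1/2})$, which embeds in $\R^3$ via a rescaled $\Phi$ with constant $L_\Phi$. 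The neighborhood size depends on the curve, but the bi-Lipschitz constant does not---which is exactly what the lemma asserts. Your argument is easily fixed by inserting this $\lambda$.
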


\begin{proof}
Let $M>0$ be such that, for all $t\in [0,T]$,
\[1 \leq |2z'(t) + x'(t)y(t) - x(t)y'(t)| \leq M.\]
%and $\max\{|x(t)|, |y(t)|, |z(t)|\} \leq M$.

By compactness of $[0,T]$ and uniform continuity of $w'(t)$, there exists $\e \in (0,1)$ such that if $t_0,\dots,t_5\in [0,T]$ are such that $|t_i-t_j|\leq \e$, then
\begin{equation}\label{eq:1}
\frac12 \leq \frac{|2z'(t_1) + x'(t_2)y(t_3) - x(t_4)y'(t_5)|}{|2z'(t_0) + x'(t_0)y(t_0) - x(t_0)y'(t_0)|} \leq 2.
\end{equation}

We claim that,
for any $t,t' \in [0,T]$ with $|t-t'| < \e$, 
there is an $L$-bi-Lipschitz map between $(w([t,t']),d_{\mathbb{H}})$ and $([t,t'],\lambda |\cdot|^{1/2})$
for some constant $\lambda \geq 1$
and a universal constant $L > 1$.
Assuming the claim, we see that $(w([0,T]),d_{\bH})$ locally $L$-bi-Lipschitz embeds into $\R^3$ for some universal $L$. 

To prove the claim, fix $\tau_1,\tau_2 \in [0,T]$ such that $0<\tau_2-\tau_1\leq \e$. Fix also $t_0 \in [\tau_1,\tau_2]$ and set
\[ \lambda = |2z'(t_0) + x'(t_0)y(t_0) - x(t_0)y'(t_0)|^{1/2} \in [1,M^{1/2}].\]
By the Mean Value Theorem, \eqref{eq:1}, 
and the fact that $\e < 1 \leq \lambda$,
if $t_1,t_2 \in [\tau_1,\tau_2]$, there exist $c_1,\dots,c_5 \in (t_1,t_2)$ such that
\begin{align*}
d_{\bH}(w(t_1),&w(t_2))\\
&= |x(t_1) - x(t_2)| + |y(t_1) - y(t_2)| + |z(t_1) - z(t_2) + \tfrac{1}{2}(x(t_1)y(t_2)-x(t_2)y(t_1))|^{1/2}\\
&= (|x'(c_1)|+|y'(c_2)|)|t_1-t_2| +|z'(c_3) + \tfrac12x'(c_4)y(t_2) - \tfrac12 x(t_2)y'(c_5)||t_1-t_2|^{1/2}\\
&\simeq |z'(t_0) + \tfrac12x'(t_0)y(t_0) - \tfrac12 x(t_0)y'(t_0)||t_1-t_2|^{1/2}\\
&= \lambda |t_1-t_2|^{1/2}.
\end{align*}
Hence, $(w([\tau_1,\tau_2]),d_{\bH})$ is $L$-bi-Lipschitz equivalent to $([\tau_1,\tau_2], \lambda |\cdot|^{1/2})$ and the claim follows.
\end{proof}

\begin{proof}[{Proof of Proposition \ref{thm:curves}}]
Fix $t_1,t_2\in I$ with $t_1<t_2$. We show that the space $(w([t_1,t_2]),d_{\bH})$ locally bi-Lipschitz embeds in $\R^4$. 

Let $M>1$ be such that 
\[ \max\{|x(t)|,|y(t)|\} \leq M \qquad\text{for all }t\in[t_1,t_2].\]
Let $\e_0 = (4M)^{-2}$. Note that if $|x'(t)|^2 +|y'(t)|^2 \leq \e_0$, then, since $w$ has unit speed in $\mathbb{R}^3$, we have
\begin{align}\label{eq:2}
|2z'(t) + x'(t)y(t) - x(t)y'(t)| &\geq 2|z'(t)| -|x'(t)||y(t)| - |x(t)||y'(t)|\\
&\geq 2|z'(t)| - 2M\sqrt{\e_0}\notag\\
&= 2\sqrt{1-|x'(t)|^2-|y'(t)|^2} - 2M\sqrt{\e_0} \notag\\
&\geq 2\sqrt{1 - \e_0} - 2M\sqrt{\e_0} \notag\\
&\geq 1. \notag
\end{align}

By uniform continuity of $w'|_{[t_1,t_2]}$ and compactness of $[t_1,t_2]$, there exist finitely many closed intervals $I_1,\dots, I_k \subset [t_1,t_2]$ such that 
\begin{enumerate}
\item $\bigcup_{j=1}^k I_j = [t_1,t_2]$,
\item for each $j\in\{1,\dots,k\}$, either $|x'(t)|^2 +|y'(t)|^2 \leq \e_0$ for all $t\in I_j$ or $|x'(t)|^2 +|y'(t)|^2 \geq \e_0/2$ for all $t\in I_j$.
\end{enumerate}

If $|x'(t)|^2 +|y'(t)|^2 \leq \e_0$ for all $t\in I_j$, then (\ref{eq:2}) and Lemma \ref{lem:snow} imply that $(w(I_j),d_{\bH})$ locally bi-Lipschitz embeds in $\R^3$.

If $|x'(t)|^2 +|y'(t)|^2 \geq \e_0/2$ for all $t\in I_j$, then the projection of $w(I_j)$ on $\R^2\times\{0\}$ is a $C^{1,1}$ smooth curve which can be re-parameterized so that $|x'(t)|^2 + |y'(t)|^2 = 1$. 
The surface $(w(I_j)\times\R,d_{\bH})$ is $C^{1,1}$ and consists only of $\mathbb{H}$-regular points.
Hence, we will see in Proposition~\ref{thm:reg} that $(w(I_j)\times\R,d_{\bH})$ locally bi-Lipschitz embeds in $\R^4$
with a universal constant.
In particular, this means that $(w(I_j),d_{\bH})$ locally bi-Lipschitz embeds in $\R^4$.
\end{proof}

\subsection{$\bH$-regular surfaces}\label{sec:surfaces}
Here we show the second part of Theorem \ref{thm:curve-surface} which we restate in the following proposition.

%Just how odd are the characteristic points? See Section \ref{sec:axy}!!

%\begin{defn}
%A surface $S\subset \R^3$ is a \emph{$\mathbb{H}$-regular surface} if for any $p\in S$ there exists an open set $U$ and a function $F:U \in C^2(U,\R)$ such that
%\begin{enumerate}
%\item $S\cap U = \{q\in U : F(q)=0\}$
%\item $(2F_x - y F_z, 2F_y + xF_z) \neq (0,0)$  for all $p\in U$ .
%\end{enumerate}
%\end{defn} \note{Slightly different definition because of $C^2$}

\begin{prop}\label{thm:reg}
There exists a universal constant $L>1$ satisfying the following.
Let $M\subset \R^3$ be a $C^{1,1}$ $2$-manifold in $\mathbb{R}^3$ and let $p_0$ be an $\bH$-regular point of $M$. 
Then there exist a neighborhood $U \subset \mathbb{R}^3$ of $p_0$ and an $L$-bi-Lipschitz embedding of
$(U \cap M, d_{\mathbb{H}})$ in $\R^4$.
\end{prop}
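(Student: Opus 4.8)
The plan is to reduce to a normalized local picture and then build a bi-Lipschitz embedding into $\R^4$ by combining a horizontal foliation of $M$ with the snowflake embedding $\Phi$. Since $p_0$ is $\mathbb{H}$-regular, $T_{p_0}M \neq H_{p_0}$, so after a left translation (which is an isometry for $d_{\mathbb{H}}$, up to the usual group-law adjustment — here we use that $d_{\mathbb{H}}$ is left-invariant) and a rotation in the $xy$-plane (which is also a Heisenberg automorphism), we may assume $p_0 = 0$ and that near $0$ the surface $M$ is a graph $z = u(x,y)$ over a neighborhood of the origin in $\R^2$, with $u$ of class $C^{1,1}$, $u(0,0)=0$, and $\nabla u(0,0)$ such that the tangent plane is not horizontal: concretely $(\partial_x u(0,0), \partial_y u(0,0)) \neq (-\tfrac12 \cdot 0, \tfrac12 \cdot 0)$ in the appropriate sense, i.e. $\nabla u(0,0) \neq 0$. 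Shrinking $U$, we arrange $|\nabla u - \nabla u(0,0)|$ small, so the horizontal direction along $M$ never coincides with the tangent plane and, moreover, the ``vertical'' direction within $M$ (the direction in $T_pM$ along which the contact form $dz + \tfrac12(y\,dx - x\,dy)$ is largest) has a definite lower bound.

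The key geometric step is to foliate the graph by horizontal curves. By the Example on planes and more generally by solving the ODE $z'(t) = -\tfrac12(x'(t)y(t) - x(t)y'(t))$ constrained to the graph, through each point of $M$ near $0$ there passes a unique horizontal curve lying in $M$; these curves foliate $U \cap M$, and the transverse direction is genuinely ``snowflake-like'' because the contact form restricted to the transverse vector field is bounded away from $0$ (this is exactly $\mathbb{H}$-regularity, made quantitative by the shrinking above). Parametrize $U \cap M$ by $(s,\tau)$ where $\tau$ labels the leaf and $s$ is arclength-type parameter along the leaf; by Lemma~\ref{lem:horiz} motion in $s$ contributes linearly to $d_{\mathbb{H}}$, while by a computation like the one in Lemma~\ref{lem:snow} motion in $\tau$ contributes like $|\Delta\tau|^{1/2}$ with comparability constants controlled by $M$ and independent of the surface once normalized. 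The upshot is that $(U\cap M, d_{\mathbb{H}})$ is bi-Lipschitz equivalent, with universal constant, to a ``twisted product'' of an interval with $(\R,|\cdot|^{1/2})$ — more precisely to a subset of $\R \times (\R, |\cdot|^{1/2})$ of bounded geometry. Then define $F : U\cap M \to \R^4$ by $F(\text{point with coordinates }(s,\tau)) = (s, \Phi(\tau))$ after rescaling $\tau$ to unit ``snowflake density'' (as in Case 3 of Proposition~\ref{prop}, using property (3) of $\Phi$ and its periodization so that the integer jumps in the third coordinate of $\Phi$ do not destroy bi-Lipschitzness on long leaves). One then checks $|F(p)-F(q)| \simeq |\Delta s| + |\Delta\tau|^{1/2} \simeq d_{\mathbb{H}}(p,q)$, with all constants universal.

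The main obstacle — and the place where care is needed — is controlling the interaction between the two parameters: moving along a leaf ($s$) changes the basepoint, and the Heisenberg distance between two points $p,q \in U\cap M$ is governed by the group element $p^{-1}q$, which mixes the horizontal displacement and the vertical defect in a nonlinear way. Concretely, one must show that $d_{\mathbb{H}}(p,q)$ does not see any ``cross term'' of size larger than $|\Delta s| + |\Delta\tau|^{1/2}$; this requires that the leaves vary in a Lipschitz (indeed $C^{1,1}$, hence Lipschitz-derivative) fashion, which is where the $C^{1,1}$ hypothesis is used (so that the vector field generating the foliation is Lipschitz and the coordinate change $(x,y)\leftrightarrow(s,\tau)$ is bi-Lipschitz on $U$ with controlled constants). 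A secondary technical point, already familiar from Proposition~\ref{prop}, is handling leaves that are long relative to the snowflake scale $\lambda$: there one splits at the integer points of the rescaled parameter and uses property (3) of $\Phi$ exactly as in Case 3.1 and Case 3.3 there. Once the coordinate change and these estimates are in place, defining $F$ and verifying the two-sided bound is routine.
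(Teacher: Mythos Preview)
Your approach is essentially the paper's: Lemma~\ref{lem:foliation} constructs exactly the horizontal foliation you describe (by solving the ODE $\frac{dy}{dx} = \frac{y+2F_x}{x-2F_y}$ on a local graph $z=F(x,y)$, with $C^{1,1}$ giving Lipschitz dependence on initial data), and Lemma~\ref{lem:surfaces} defines the embedding $\Psi(G(u,v)) = (\lambda u,\kappa^{1/2}\Phi(v))$ and verifies the two-sided bound by introducing an auxiliary point $p_3 = G(u_1,v_2)$ on the same horizontal leaf as $p_2$, exactly the cross-term control you identify as the main obstacle.

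Two minor corrections. First, the periodization of $\Phi$ you borrow from Proposition~\ref{prop} is unnecessary here: the statement is purely local, so $U$ is shrunk until the leaf parameter $v$ lies in a fixed small interval and $\Phi$ is used directly with no integer jumps. Second, after left-translating $p_0$ to the origin the surface need not be a graph $z=u(x,y)$ (a vertical plane through $0$ is $\mathbb{H}$-regular but not such a graph, and $xy$-rotations do not fix this); the paper instead writes $M$ locally as a level set and branches on which partial derivative is largest, treating the three resulting graph cases in parallel.
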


The proof of this proposition is comprised by the following two lemmas.
In Lemma~\ref{lem:foliation}, 
we show that $M$ can be foliated near $p_0$ by horizontal curves
with $C^1$ dependence on parameters.
In Lemma~\ref{lem:surfaces},
we construct a local embedding of any surface which possesses such a foliation.
In fact, Lemma~\ref{lem:surfaces} will be used in several other arguments throughout the rest of this paper
when a foliation by horizontal curves can be constructed.

\begin{lem}\label{lem:foliation}
There exists a universal constant $L'>1$ satisfying the following.
Let $M$ be a $C^{1,1}$ $2$-manifold in $\R^3$ and let $p_0$ be an $\bH$-regular point of $M$. Then there exist $\e>0$ and an $L'$-bi-Lipschitz map $G:[-\e,\e]^2 \to (M,|\cdot|)$ such that 
\begin{enumerate}
\item $G(0,0) = p_0$
\item for all $v\in [-\e,\e]$, the curve $u \mapsto G(u,v)$ is a horizontal curve.
\end{enumerate}
\end{lem}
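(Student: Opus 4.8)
The plan is to produce the foliation by solving an ODE whose integral curves are exactly the horizontal curves lying on $M$, and then to verify the bi-Lipschitz estimate by a direct comparison of the Jacobian of $G$ with the identity near $p_0$.

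First I would set up coordinates adapted to the regular point. Since $p_0$ is $\bH$-regular, $T_{p_0}M \neq H_{p_0}$, so the intersection $T_{p_0}M \cap H_{p_0}$ is one-dimensional and transverse inside $T_{p_0}M$ to some other direction in $T_{p_0}M$. After a Euclidean rotation fixing the $z$-axis (which is an isometry for $d_{\bH}$ up to universal constants, or can be absorbed into the final constant $L'$) and a vertical translation, I may assume $p_0 = 0$ and that near $0$ the manifold is a graph $z = f(x,y)$ with $f(0,0)=0$, $\nabla f(0,0)$ under control, and — using regularity — that the horizontal direction at $0$ is not vertical, i.e. the graph is genuinely two-dimensional over the $x$-axis with the $x$-direction playing the role of the ``horizontal sweep''. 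Because $M$ is $C^{1,1}$, $f$ is $C^{1,1}$, hence $\nabla f$ is Lipschitz. The horizontal curves on the graph $z=f(x,y)$ are, after projecting to the $xy$-plane, the solutions of the ODE obtained from the horizontality condition $z' + \tfrac12 x' y - \tfrac12 x y' = 0$ with $z = f(x,y)$: writing $y = y(x)$ and $z = f(x,y(x))$, one gets
\[
\Big(f_x(x,y) + \tfrac12 y\Big) + \Big(f_y(x,y) - \tfrac12 x\Big) y'(x) = 0.
\]
At $p_0 = 0$ the coefficient of $y'$ is $f_y(0,0)$, and $\bH$-regularity of $0$ is precisely the statement that the normal direction $(-f_x,-f_y,1)$ to $M$ at $0$ is not proportional to the contact form's annihilator, equivalently $(f_x(0,0), f_y(0,0)) \neq (-\tfrac12 \cdot 0, \tfrac12\cdot 0) = (0,0)$; after the rotation I would arrange $f_y(0,0) \neq 0$, so the ODE is non-degenerate and can be solved for $y'(x)$ with a $C^{0,1}$ (Lipschitz) right-hand side on a neighborhood of $0$.

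Next I would invoke Picard--Lindelöf/Cauchy--Lipschitz with Lipschitz dependence on initial data: for $|v|$ small let $u \mapsto (u, Y(u,v))$ be the solution of the above ODE with $Y(0,v) = v$, defined for $|u| \le \e$. Lipschitz dependence on the parameter gives that $(u,v)\mapsto (u, Y(u,v))$ is a $C^1$ (indeed bi-Lipschitz onto its image) map of a small square, with Jacobian at the origin equal to $\begin{pmatrix} 1 & 0 \\ 0 & 1\end{pmatrix}$ since $Y_v(0,0) = 1$ and $Y_u(0,0) = -\big(f_x(0,0)+0\big)/f_y(0,0)$, which I can make $0$ after the rotation normalizing $f_x(0,0)=0$ — or simply absorb a universal bound on it into $L'$. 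Then define $G(u,v) = (u, Y(u,v), f(u,Y(u,v)))$. Condition (2) holds by construction: each curve $u\mapsto G(u,v)$ is horizontal on $M$ because it solves the horizontality ODE. Condition (1) holds after the translation. Shrinking $\e$ so that on $[-\e,\e]^2$ the Jacobian $DG$ is within, say, a factor $2$ of its value at the origin (possible since $DG$ is continuous, indeed Lipschitz, and its value at $0$ is a fixed invertible matrix with universally bounded entries and inverse — here is where I use that $|f_y(0,0)|$ is bounded below by a \emph{universal} constant, which must be arranged: if $|f_y(0,0)|$ were tiny the curve would be nearly vertical and the constant would blow up, so the rotation must be chosen to maximize the ``tilt'' of the horizontal direction away from vertical, and one checks that for any regular point of any $C^{1,1}$ graph this tilt is bounded below once the graph is scaled to unit size — alternatively, one rescales $M$ by a Heisenberg dilation to normalize), the map $G$ becomes $L'$-bi-Lipschitz from $([-\e,\e]^2,|\cdot|)$ onto a relatively open piece of $(M,|\cdot|)$ with $L'$ universal.

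The main obstacle I anticipate is exactly obtaining the \emph{universality} of the constant $L'$, not merely its finiteness for each fixed surface. The ODE coefficient $f_y(0,0)$ can be arbitrarily small for a given presentation of $M$, which would make the solution curves nearly vertical and ruin the uniform bi-Lipschitz bound; this is resolved by first applying a rotation about the $z$-axis so that the (nonvertical, by regularity) horizontal direction of $T_{p_0}M$ is rotated to a standard position, and checking that after this normalization the relevant quantities — the lower bound on $|f_y(0,0)|$, the Lipschitz constant of the right-hand side of the ODE on a definite-size neighborhood — are all controlled by absolute constants. A secondary technical point is the Lipschitz-in-parameter dependence of ODE solutions when the right side is only Lipschitz (not $C^1$), but this is the classical Gronwall estimate and causes no difficulty. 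Once these normalizations are in place, the remainder is the routine Jacobian-comparison argument sketched above, yielding the universal $L' > 1$ claimed in the statement.
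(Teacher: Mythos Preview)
Your strategy---write $M$ locally as a graph, solve the horizontality ODE with varying initial condition, and control the Jacobian of the resulting parameterization---is exactly what the paper does. However, your normalizations for the universal constant have gaps. First, a rotation about the $z$-axis together with a vertical translation cannot move $p_0$ to the origin: the radius $\sqrt{x_0^2+y_0^2}$ is unchanged. More seriously, you assume without justification that $M$ is a graph $z=f(x,y)$ with $|\nabla f(0,0)|$ ``under control'': if $T_{p_0}M$ is vertical this fails outright, and if it is nearly vertical then $|\nabla f|$ is unbounded, which enters the third row of $DG$ and destroys universality. Your suggested fix via Heisenberg dilation does not work either, since $\delta_\lambda$ scales the $z$-coordinate by $\lambda^2$ and the others by $\lambda$; it is not a Euclidean similarity and hence does not preserve Euclidean bi-Lipschitz constants (which is what the lemma demands on the target $(M,|\cdot|)$).

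The paper secures universality differently and more simply: it does \emph{not} translate $p_0$ to the origin, and instead (i) chooses the coordinate to solve for in the level-set representation so that the resulting graph (over the $xy$-, $xz$-, or $yz$-plane as appropriate) has partial derivatives bounded by $2$, and (ii) chooses which of the two remaining variables is the ODE parameter so that the slope $\dfrac{dy}{dx}=\dfrac{y_0+2F_x}{x_0-2F_y}$ at $p_0$ has absolute value at most $1$. These two ``without loss of generality'' choices bound every entry of $DG(0,0)$ by an absolute constant and yield the universal $L'$ with no rescaling at all. Replacing your rotation/dilation step with these two case distinctions would close the gap.
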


\begin{proof}
In a neighborhood of $p_0=(x_0,y_0,z_0)$, the manifold $M$ can be written as the level set $H(x,y,z) = 0$ for some $C^{1,1}$ function $H$ with 
$|\nabla H(x_0,y_0,z_0)| >0$. Without loss of generality, we assume that 
\[ |H_z(x_0,y_0,z_0)| \geq \max\{|H_x(x_0,y_0,z_0)|,|H_y(x_0,y_0,z_0)|\}.\]
The other two cases are similar and are left to the reader.

Then, in a sufficiently small neighborhood $U$ of $p_0$ we can write 
\[ M\cap U = \{(x,y,F(x,y)) : (x,y) \in V\}\]
for some open $V \subset \mathbb{R}^2$
where $F$ is a $C^{1,1}$ function with $F(x_0,y_0) = z_0$ and 
\[ \sup_{(x,y)\in V}\{|F_x(x,y)|, |F_y(x,y)|\} \leq 2.\]

Since $p_0$ is $\bH$-regular, either $x_0 - 2 F_y(x_0,y_0) \neq 0$ or $y_0+2F_x(x_0,y_0) \neq 0$. Without loss of generality, we assume that $x_0-2 F_y(x_0,y_0) \neq 0$ and that
\begin{equation}
\label{ODEbnd} 
|y_0+2F_x(x_0,y_0)| \leq |x_0-2F_y(x_0,y_0)|.
\end{equation}
There exists a neighborhood $V_1\subset V$ of $(x_0,y_0)$ such that 
$x-2F_y \neq 0$ in $V_1$.
Thus the horizontal curves in a neighborhood of $p_0$ in $M$ are given by the ODE
\begin{equation}\label{eq:ODE1}
\frac{dy}{dx}  = \frac{y+2F_x}{x-2F_y}.
\end{equation}
By standard existence and uniqueness theorems (e.g.  \cite[Theorem 1.2]{ODE} and \cite[Theorem 2.2]{ODE}), there is some $\e_1>0$ such that, for each $v \in [-\e_1,\e_1]$, there exists a unique solution $y = g_v(x)$ of (\ref{eq:ODE1}) defined on $[x_0-\e_1,x_0+\e_1]$ such that $g_v(x_0) = y_0 + v$.  Moreover, the function 
\[ f:[-\e_1,\e_1]^2 \to \R^2 \qquad\text{with}\qquad f(u,v) := (x_0+u,g_v(x_0+u))\] 
%given by 
%\[ G(x,t) := (x,g_t(x))\] 
is $C^{1}$ \cite[Theorem 7.1]{ODE}. 
By existence and uniqueness of solutions, 
there is a neighborhood of $(x_0,y_0)$ in which the graphs of $g_v$ do not intersect,
and every point in this neighborhood is contained in the graph of some $g_v$. Moreover, the derivative of $f$ at $(0,0)$ satisfies
\[ Df(0,0) = 
\begin{bmatrix}
    1       & 0 \\
    \frac{d}{du}g_{0}(x_0 + u)|_{u=0}      &  \frac{d}{dv}g_{v}(x_0)|_{v=0} 
\end{bmatrix}
=
\begin{bmatrix}
    1       & 0 \\
    \frac{y_0+2F_x(x_0,y_0)}{x_0-2F_y(x_0,y_0)}       & 1 
\end{bmatrix} .
\]
Thus, for some $\e_2 > 0$,
$G$ is a $C^1$ diffeomorphism in 
$[-\e_2,\e_2]^2$,
and, by \eqref{ODEbnd}, there exists a universal constant $L_1>1$ such that
\begin{equation}
\label{lbound} 
L_1^{-1} |(u,v)-(u',v')| \leq |f(u,v) - f(u',v')| \leq L_1 |(u,v)-(u',v')|
\end{equation}
 for all $(u,v), (u',v')\in [-\e_2,\e_2]^2$.
 
Define now $G: [-\e_2,\e_2]^2 \to M$ by $G(u,v) = (f(u,v), F(f(u,v)))$ which has been defined so that the curves $u \mapsto G(u,v)$ are horizontal curves contained in $M$ for any $v \in [-\e_2,\e_2]$. Moreover
\begin{align*}
L_1^{-1}|(u,v)-(u',v')| &\leq |G(u,v) - G(u',v')| \\
&\lesssim (L_1 + \|F_x\|_{\infty} + \|F_y\|_{\infty}) |(u,v)-(u',v')|\\
&\leq (L_1+2)|(u,v)-(u',v')|,
\end{align*}
 and hence $G$ is $L'$-bi-Lipschitz for some universal $L'>1$.
\end{proof}

\begin{lem}
\label{lem:surfaces}
There exists a universal constant $L>1$ satisfying the following.
Fix $M \subset \mathbb{R}^3$.
Suppose there exist $\e>0$, $L'>1$, and an $L'$-bi-Lipschitz map 
$G:[-\e,\e]^2 \to M$ such that 
\begin{enumerate}
\item $G(0,0) = p_0$
\item for all $v\in [-\e,\e]$, the curve $u \mapsto G(u,v)$ is a horizontal curve.
\end{enumerate}
Write $G = (g_1,g_2,g_3)$, and set
\begin{align*} 
\kappa &:= |2\partial_v g_3(0,0) + g_2(0,0)\partial_v g_1(0,0) - g_1(0,0)\partial_v g_2(0,0)|\\
\lambda &:= |\partial_u g_1(0,0)|+|\partial_u g_2(0,0)|.
\end{align*}
Define the map
\[ 
\Psi : (G([-\e,\e]^2) \cap M, d_{\bH}) \to \R^4 \quad\text{with}\quad \Psi(G(u,v)) = (\lambda u,\kappa^{1/2}\Phi(v)).
\]
Then 
there is a neighborhood $U \subset \mathbb{R}^3$ of $p_0$
such that $\Psi: (U \cap M, d_{\bH}) \to \mathbb{R}^4$ is $L$-bi-Lipschitz.
\end{lem}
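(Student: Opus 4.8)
The goal is to show that $\Psi(G(u,v)) = (\lambda u, \kappa^{1/2}\Phi(v))$ is bi-Lipschitz from $(U \cap M, d_{\bH})$ to $\R^4$ for a suitable neighborhood $U$ of $p_0$, with universal constant. The strategy is to compare $d_{\bH}(G(u,v), G(u',v'))$ and $|\Psi(G(u,v)) - \Psi(G(u',v'))|$ by reducing both to the model quantity $\lambda|u-u'| + \kappa^{1/2}|v-v'|^{1/2}$.

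\textbf{Step 1: The Euclidean side.} Since $\Phi$ is $L_\Phi$-bi-Lipschitz from $(\R, |\cdot|^{1/2})$ to $\R^3$, we have immediately
\[
|\Psi(G(u,v)) - \Psi(G(u',v'))| \simeq \lambda|u-u'| + \kappa^{1/2}|v-v'|^{1/2},
\]
with universal implicit constants. So it suffices to prove $d_{\bH}(G(u,v), G(u',v')) \simeq \lambda|u-u'| + \kappa^{1/2}|v-v'|^{1/2}$ on a small neighborhood of $(0,0)$.

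\textbf{Step 2: Localization and the ``frozen coefficient'' estimates.} Here is where the neighborhood $U$ gets chosen. Write $G = (g_1, g_2, g_3)$; this is a $C^1$ (in fact the $u$-derivatives are what we control, the horizontality giving $\partial_u g_3 = \frac12(g_1 \partial_u g_2 - g_2 \partial_u g_1)$) map on $[-\e,\e]^2$. Since $p_0$ is $\bH$-regular (encoded in the hypothesis that we want $\kappa \neq 0$ — one should check that $\kappa > 0$ follows from $\bH$-regularity via Lemma~\ref{lem:foliation}, or rather is built into the setup) and $G$ is bi-Lipschitz (so $\lambda \gtrsim 1/L'$ and in fact $\lambda \simeq 1$), the functions
\[
(u,v) \mapsto 2\partial_v g_3 + g_2 \partial_v g_1 - g_1 \partial_v g_2 \quad\text{and}\quad (u,v)\mapsto |\partial_u g_1| + |\partial_u g_2|
\]
are continuous and bounded away from $0$ near $(0,0)$; by shrinking to a neighborhood $[-\e',\e']^2$ we may assume all of these ``coefficients'' evaluated anywhere in the square are within a factor $2$ of their values $\kappa, \lambda$ at $(0,0)$, exactly as in the proof of Lemma~\ref{lem:snow}, eq.~\eqref{eq:1}. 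Set $U$ to be a neighborhood of $p_0$ with $U \cap M \subset G([-\e',\e']^2)$.

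\textbf{Step 3: Estimating $d_{\bH}$ via Mean Value Theorem.} For $(u,v), (u',v') \in [-\e',\e']^2$, I would estimate the three terms of $d_{\bH}(G(u,v), G(u',v'))$ using \eqref{eq:comparison}:
the horizontal part $|g_1(u,v) - g_1(u',v')| + |g_2(u,v)-g_2(u',v')|$ and the vertical part $|g_3(u,v) - g_3(u',v') + \tfrac12(\cdots)|^{1/2}$. The main point is a two-step comparison: go from $(u,v)$ to $(u', v)$ along a horizontal curve, then from $(u',v)$ to $(u',v')$ transversally. Along the first leg, Lemma~\ref{lem:horiz} combined with the frozen-coefficient bounds on $\partial_u g_1, \partial_u g_2$ gives $d_{\bH}(G(u,v), G(u',v)) \simeq \lambda|u-u'|$ (the vertical contribution vanishes since we move along a horizontal curve, up to the extra cross-terms from the basepoint which are controlled). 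Along the second leg, since we move only in $v$, the MVT in the $v$-variable together with the frozen bound on $2\partial_v g_3 + g_2\partial_v g_1 - g_1 \partial_v g_2$ gives the vertical term $\simeq \kappa^{1/2}|v-v'|^{1/2}$ and a horizontal term that is $\lesssim |v-v'| \lesssim \kappa^{1/2}|v-v'|^{1/2}$ (using $\e' < 1 \leq \kappa^{1/2}$ after possibly rescaling, or more carefully bounding $|v-v'|^{1/2}$ from below on a bounded set). Finally a quasi-triangle-inequality for $d_{\bH}$ (valid with a universal multiplicative loss, since $d_{\bH} \simeq d_K$ and $d_K$ is a genuine metric) combines the two legs into $d_{\bH}(G(u,v), G(u',v')) \lesssim \lambda|u-u'| + \kappa^{1/2}|v-v'|^{1/2}$, and the reverse inequality follows because the horizontal displacement already forces $\gtrsim \lambda|u-u'|$ and, after subtracting that, the vertical term forces $\gtrsim \kappa^{1/2}|v-v'|^{1/2}$.

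\textbf{Main obstacle.} The delicate part is the lower bound $d_{\bH}(G(u,v), G(u',v')) \gtrsim \lambda|u-u'| + \kappa^{1/2}|v-v'|^{1/2}$: one must rule out cancellation between the $u$-motion and $v$-motion inside the vertical (area) term. The clean way is to note that the horizontal coordinates already control $|u-u'|$ (since $\partial_u g_1, \partial_u g_2$ are bounded below and $\partial_v g_1, \partial_v g_2$ are bounded, the map $(u,v)\mapsto (g_1,g_2)$ has, near $(0,0)$, a derivative close to a fixed invertible matrix, so it is bi-Lipschitz onto its image), giving the $\lambda|u-u'|$ term; and then, restricting to the fiber of fixed horizontal image — or just using that on that fiber the vertical coordinate moves at rate comparable to $\kappa$ in $v$ — one extracts the $\kappa^{1/2}|v-v'|^{1/2}$ term. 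I expect this ``the horizontal projection is already bi-Lipschitz'' observation, plus careful bookkeeping of the basepoint cross-terms in $d_{\bH}$, to be the only real content; everything else is the same MVT-with-frozen-coefficients computation already used in Lemma~\ref{lem:snow} and Proposition~\ref{prop}.
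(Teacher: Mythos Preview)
Your overall plan matches the paper's: reduce to $d_{\bH}(G(u,v),G(u',v'))\simeq\lambda|u-u'|+\kappa^{1/2}|v-v'|^{1/2}$ on a small square, obtain the upper bound by routing through an intermediate point on a common horizontal leaf (Lemma~\ref{lem:horiz} on one leg, mean value theorem on the other), and then argue the lower bound. The upper bound and the localization in Step~2 are fine.

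The gap is in your resolution of the ``main obstacle.'' You assert that $(u,v)\mapsto(g_1,g_2)$ has derivative close to a fixed \emph{invertible} matrix near $(0,0)$, hence is locally bi-Lipschitz, and that this yields $d_{\bH}\gtrsim\lambda|u-u'|$. But nothing in the hypotheses forces $D(g_1,g_2)(0,0)$ to be invertible: the transverse curve $v\mapsto G(0,v)$ may well be purely vertical ($\partial_v g_1(0,0)=\partial_v g_2(0,0)=0$), in which case the matrix is singular and the horizontal projection $(g_1,g_2)$ cannot by itself separate the $u$-variable. The paper handles the lower bound differently, via a case split on which of $\lambda|u-u'|$ and $\kappa^{1/2}|v-v'|^{1/2}$ dominates (up to a fixed constant $C_0$). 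When the $u$-term dominates, the case hypothesis gives $|v-v'|\lesssim(\lambda^2/\kappa)|u-u'|^2$, and after shrinking $\e$ the $v$-contribution to $(g_1,g_2)$ becomes negligible, so the horizontal part of $d_{\bH}$ really is $\simeq\lambda|u-u'|$. When the $v$-term dominates, one expands the vertical (area) term of $d_{\bH}$ directly, isolates a piece of size $\gtrsim\kappa|v-v'|$ and a remainder of size $\lesssim\lambda^2|u-u'|^2$ coming from the horizontal leg (this is \eqref{eq:regest3}), and the case hypothesis lets the former absorb the latter. Your phrase ``after subtracting that, the vertical term forces $\gtrsim\kappa^{1/2}|v-v'|^{1/2}$'' is the right instinct for this second case, but it only goes through with a universal constant once you have the case hypothesis in hand.
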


Note that, while $L$ may be found independent of the bi-Lipschitz constant $L'$,
the size of the neighborhood $U$ certainly depends on the value of $L'$.

\begin{proof}
We first note that $\lambda>0$ since the curve $\gamma_1(u) = G(u,0)$ is horizontal. 
Moreover, since $G$ is $L'$-bi-Lipschitz, $\lambda$ is bounded from above by a constant $C$ depending only on $L'$. 
On the other hand, the curve $\gamma_2(v) = G(0,v)$ intersects $\gamma_1$ transversely at $p_0$ and hence cannot be horizontal. Thus, $\kappa>0$. 
Fix $(u_1,v_1),(u_2,v_2) \in [-\e,\e]^2$ and set
\[ p_1 = G(u_1,v_1), \quad p_2=G(u_2,v_2), \quad p_3=G(u_1,v_2).\]
That is, $p_2$ and $p_3$ lie along the same horizontal curve $u \mapsto G(u,v_2)$.

Choosing $\e$ possibly smaller, we have for some $c_1,c_2,c_3 \in [-\e,\e]$
\begin{align*}
d_{\bH}(p_1,p_3) &= |g_1(u_1,v_1)-g_1(u_1,v_2)| + |g_2(u_1,v_1)-g_2(u_1,v_2)|\\
&\quad + \left | g_3(u_1,v_1) - g_3(u_1,v_2) +\tfrac12 g_1(u_1,v_1)g_2(u_1,v_2) - \tfrac12 g_2(u_1,v_1)g_1(u_1,v_2) \right |^{1/2}\\
&= (|\partial_v g_1(u_1,c_1)| + |\partial_v g_1(u_2,c_2)|)|v_1-v_2|\\
&\quad + \left | \partial_v g_3(u_1,c_3) +\tfrac12 \partial_v g_1(u_1,c_1)g_2(u_1,v_2) - \tfrac12 \partial_v g_2(u_1,c_2)g_1(u_1,v_2) \right |^{1/2}|v_1-v_2|^{1/2}\\
&\lesssim \lambda|u_1-u_2| + \kappa^{1/2}|v_1-v_2|^{1/2}.
\end{align*}
Additionally, by Lemma \ref{lem:horiz} we have
\begin{align}\label{eq:regest2}
d_{\bH}(p_2,p_3) \lesssim (\|\partial_u g_1\|_{\infty}+\|\partial_u g_2\|_{\infty})|u_1-u_2| \lesssim \lambda|u_1-u_2|
\end{align}
assuming that $\e$ is sufficiently small. By the triangle inequality,
\[ d_{\bH}(p_1,p_2) \lesssim \lambda|u_1-u_2| + \kappa^{1/2}|v_1-v_2|^{1/2} \simeq |\Psi(p_1) - \Psi(p_2)|.\]

We now show that $d_{\mathbb{H}}(p_1,p_2) \gtrsim |\Psi(p_1) - \Psi(p_2)|$. By (\ref{eq:regest2}), there exists a universal constant $C_0$ such that
\begin{equation}\label{eq:regest3}
\left| g_3(u_1,v_2)-g_3(u_2,v_2) + \tfrac12 g_1(u_1,v_2)g_2(u_2,v_2) - \tfrac12 g_2(u_1,v_2)g_1(u_2,v_2) \right|^{1/2} \leq C_0\lambda|u_1 - u_2|.
\end{equation} 
Consider the following two cases

\emph{Case 1.} Suppose that $C_0 \lambda |u_1-u_2| \geq (\kappa/16)^{1/2}|v_1-v_2|^{1/2}$.
\begin{align*}
|\Psi(p_1) - \Psi(p_2)| \simeq \lambda|u_1-u_2| + \kappa^{1/2}|v_1-v_2|^{1/2} \lesssim \lambda |u_1-u_2|\lesssim d_{\bH}(p_1,p_2).
\end{align*}

\emph{Case 2.} Suppose that $C_0 \lambda |u_1-u_2| < (\kappa/16)^{1/2}|v_1-v_2|^{1/2}$. Then,
\[ d_{\bH}(p_1,p_2) \geq \lambda|u_1-u_2| + |g_3(u_1,v_1)-g_3(u_2,v_2) + \tfrac12 g_1(u_1,v_1)g_2(u_2,v_2) - \tfrac12 g_2(u_1,v_1)g_1(u_2,v_2)|^{1/2}.\]
Now,
\begin{align*}
g_3(u_1,v_1)&-g_3(u_2,v_2) + \tfrac12 g_1(u_1,v_1)g_2(u_2,v_2) - \tfrac12 g_2(u_1,v_1)g_1(u_2,v_2)\\
&= g_3(u_1,v_1)-g_3(u_1,v_2) + \tfrac12 g_2(u_2,v_2)(g_1(u_1,v_1)-g_1(u_1,v_2))\\
&\quad - \tfrac12 g_1(u_2,v_2) (g_2(u_1,v_1)-g_2(u_1,v_2))\\
&\quad +g_3(u_1,v_2)-g_3(u_2,v_2) + \tfrac12 g_1(u_1,v_2)g_2(u_2,v_2) - \tfrac12 g_2(u_1,v_2)g_1(u_2,v_2)
\end{align*}
Again assuming that $\e$ is sufficiently small,
\begin{align*}
\Big|&g_3(u_1,v_1)-g_3(u_1,v_2) + \tfrac12 g_2(u_2,v_2)(g_1(u_1,v_1)-g_1(u_1,v_2))\\
& \qquad - \tfrac12 g_1(u_2,v_2) (g_2(u_1,v_1)-g_2(u_1,v_2))\Big|\\
&= \left|\partial_v g_3(u_1,c_3) +\tfrac12 g_2(u_2,v_2)\partial_v g_1(u_1,c_1) - \tfrac12 g_1(u_2,v_2)\partial_v g_2(u_1,c_2)\right||v_1-v_2|\\
&\geq \kappa/2 |v_1-v_2|.
\end{align*}
Thus, by (\ref{eq:regest3})
\begin{align*}
d_{\bH}(p_1,p_2) &\gtrsim \lambda|u_1-u_2| + \left( \kappa/2 \right)^{1/2}|v_1-v_2|^{1/2} - C_0\lambda|u_1-u_2|\\
&\gtrsim \lambda|u_1-u_2| + \kappa^{1/2}|v_1-v_2|^{1/2}\\
&\simeq |\Psi(p_1)-\Psi(p_2)|.
\end{align*}
Therefore, $\Psi$ is bi-Lipschitz in a neighborhood of $p_0$.
\end{proof}

%\begin{rem}\label{rem:flag}
%Suppose now that, in a neighborhood of a regular point, we can express $M$ as the graph of a $C^{1,1}$ function $x=F(y,z)$. Then the point $p=(F(y_0,z_0),y_0,z_0)$ satisfies
%\[ F(y_0,z_0) - y_0F_y(y_0,z_0) \neq 0 \qquad\text{or}\qquad 2+y_0F_z(y_0,z_0)  \neq 0.\]
%Without loss of generality assume the latter. Then, the horizontal curves on $M$, close to $p$ are given by the ODE
%\begin{equation}\label{eq:ODE2}
%\frac{dz}{dy} = \frac{F-yF_y}{2+y F_z} .
%\end{equation}
%As in the proof of Proposition~\ref{thm:reg} in the case $z=F(x,y)$, there exists a map 
%\[ G:[y_0-\e,y_0+\e]\times[z_0-\e,z_0+\e] \to \mathbb{R}^2\]
%such that, for each $t\in [z_0-\e,z_0+\e]$, $g_t(y) = G(y,t)$ is a $C^1$ solution of (\ref{eq:ODE2})
%and $G$ is a $C^1$ diffeomorphism in some (possibly smaller) neighborhood $U$ of $(y_0,z_0)$. 
%Define now the embedding
%\[ \Psi:\{(F(y,z),y,z) : (y,z) \in U\} \to \R^4\]
%given by
%\[ \Psi(F(y,z),y,z) = (\Phi(G(y_0,t)),y) \]
%where $t\in[z_0-\e,z_0+\e]$ is such that $G(y,t)=z$. Working as above, we can show that $\Psi$ is bi-Lipschitz. The case $y=F(x,z)$ is identical.
%\end{rem}

\subsection{The smooth torus}
Although the set of characteristic points of a given smooth manifold is very small, it rarely is empty. By the Hairy-Ball Theorem and the classification of $2$-manifolds, if a compact differentiable manifold $M$ has no characteristic points, then it must be homeomorphic to the torus $\S^1\times\S^1$.  On the other hand, a topological torus could be $\bH$-regular. Given numbers $0<r<R$, define the torus
\[ \mathbb{T}_{r,R} = \{(x,y,z) : (R-\sqrt{x^2+y^2})^2 +z^2 = r^2\}.\]

We claim that $\mathbb{T}_{r,R}$ has no characteristic points. Recall first that both the horizontal distribution and the tangent space of $\mathbb{T}_{r,R}$ are invariant under rotations with respect to the $z$-axis in the following sense. If $P_w$ (resp. $\hat{P}_w$) is the affine space containing $w\in \mathbb{T}_{r,R}$ and generated by $T_w\mathbb{T}_{r,R}$ (resp. generated by $H_w$) and if $\zeta:\R^3 \to\R^3$ is a rotation with respect to $z$-axis, then $\zeta(P_w) = P_{\zeta(w)}$ and $\zeta(\hat{P}_w) = \hat{P}_{\zeta(w)}$. To show the claim, suppose that a point $w \in \mathbb{T}_{r,R}$ is characteristic and write $w = (r_0,\theta_0,z_0)$ in cylindrical coordinates. By the rotation invariance of the tangent space and the horizontal distribution, every point of the circle $S = \{(r_0,\theta,z_0) : \theta \in [0,2\pi)\}$ is characteristic. Therefore, $S$ is horizontal. However, by \textsection\ref{sec:planes}, $(S,d_{\bH})$ is bi-Lipschitz equivalent to $(\S^1,|\cdot|^{1/2})$, which is a contradiction.

Let $V$ be a smooth unitary vector field on $\mathbb{T}_{r,R}$ such that $V(w) \in T_w\mathbb{T}_{r,R} \cap H_w$ for all $w\in \mathbb{T}_{r,R}$. By the Frobenius Theorem, $V$ is integrable and $\mathbb{T}_{r,R}$ is foliated by the (closed) integral curves of $V$ which are obtained by rotating one of them with respect to the $z$-axis. Specifically, there exists a diffeomorphism $G:\S^1\times \S^1 \to \mathbb{T}_{r,R}$ such that
$s \mapsto G(s,\eta)$
%$\g_{\eta} : \S^1 \to  \mathbb{T}_{r,R}$ with $\g_{\eta}(s) = G(s,\eta)$ 
is a horizontal curve for each $\eta \in \S^1$.

We now claim that $(\mathbb{T}_{r,R},d_{\bH})$ bi-Lipschitz embeds in $\R^4$. 
Indeed, say $\tau$ is an identification of $\mathbb{R}^2 / \mathbb{Z}^2$ with $\S^1 \times \S^1$
and define $\Psi:\mathbb{T}_{r,R} \to \mathbb{R}^4$
so that 
$$
\Psi(G(\tau(u,v))) = (u,\Phi(v))
\quad
\text{for all }
(u,v) \in \mathbb{R}^2 / \mathbb{Z}^2.
$$
According to Lemma~\ref{lem:surfaces}, there is a constant $L>1$ such that, for any $p \in \mathbb{T}_{r,R}$, 
there exists a neighborhood $U_p$ of $p$ in $\mathbb{T}_{r,R}$ wherein $\Psi|_{(U_p,d_{\bH})}$ is $L$-bi-Lipschitz.
%Let $\phi = (\phi_1,\phi_2,\phi_3):\S^1 \to \R^3$ be the snowflaking embedding \eqref{snowflake2} with $\min_{s\in\S^1}\phi_3(s) = 1$. 
%We now rotate the snowflake $\phi(\S^1)$ in $\R^4$ with respect to the plane $\R^2\times\{(0,0)\}$ as follows. 
%Define
%\[ \Psi : (\mathbb{T}_{r,R},d_{\bH}) \to \R^4 \qquad\text{with}\qquad \Psi(G(\eta,s)) = (\phi_1(\eta),\phi_2(\eta),\phi_3(\eta)s).\]
%By Proposition \ref{thm:reg}, there exists universal $L>1$ such that, for each $p \in \mathbb{T}_{r,R}$, there exists a neighborhood $U_p$ of $p$ in $\mathbb{T}_{r,R}$ such that $\Psi$ is $L$-bi-Lipschitz. 
By the compactness of $\mathbb{T}_{r,R}$, there exists $\e>0$ such that $\Psi$ is bi-Lipschitz on all compact sets of $\mathbb{T}_{r,R}$ with (Euclidean) diameter at most $\e$. 
On the other hand, by Lemma \ref{lem:comparison}, for all $w,w' \in \mathbb{T}_{r,R}$ with $|w-w'|\geq \e$ we have
\[ d_{\bH}(w,w') \simeq_{\e} 1 \simeq_{\e} |\Psi(w) - \Psi(w')|.\]
Therefore, $\Psi$ is bi-Lipschitz.

\section{Some smooth surfaces with characteristic points}\label{sec:other}
Here we examine the bi-Lipschitz embeddability of some special classes of smooth surfaces that contain characteristic points.

\subsection{Surfaces obtained by revolution about the $z$-axis}\label{sec:revolution}
Let $F:[0,\infty) \to \mathbb{R}$ be a $C^{1,1}$ function. Let $\Sigma$ be the surface of revolution generated by $F$:
\[ \Sigma = \{(w,F(|w|)) : w\in\R^2\}.\] 
Below we denote by $\textbf{0}$ the origins $(0,0)$ and $(0,0,0)$, depending on the context.

\begin{thm}\label{thm:revolution}
Suppose that $F$ is $C^{1,1}$ with $F'(0) = 0$,
and set
%the following hold.
%\begin{enumerate}
%\item 
\[M:=\max\left\{1,  \limsup_{t\downarrow 0} \frac{|F'(t)|}{t} \right\}.\]
There exist $\e>0$, $L=L(M)>1$,
and a neighborhood $U \subset \Sigma$ of $(0,0,F(0))$
such that $(U,d_{\mathbb{H}})$ $L$-bi-Lipschitz embeds in $\R^4$.
%\item Any compact subset of $(\Sigma,d_{\mathbb{H}})$ bi-Lipschitz embeds in some Euclidean space $\R^n$.
%\end{enumerate}
\end{thm}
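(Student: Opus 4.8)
The plan is to reduce Theorem~\ref{thm:revolution} to Lemma~\ref{lem:surfaces} by building a foliation of $\Sigma$ near the pole $(0,0,F(0))$ by horizontal curves. First I would set up cylindrical-type coordinates: write points of $\R^2$ as $w = \rho\, s$ with $\rho \ge 0$ and $s\in\S^1$, so that $\Sigma$ near $\textbf{0}$ is parametrized by $(\rho,s)\mapsto (\rho s, F(\rho))$. A tangent vector to $\Sigma$ at such a point is horizontal precisely when the $z$-component of $\gamma'$ kills the contact form, which (using $F'(0)=0$ and the $C^{1,1}$ bound giving $|F'(\rho)|\lesssim M\rho$) translates into an ODE for the angular coordinate $\theta(\rho)$ along the curve. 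The key computation is to check that this ODE has a regular right-hand side near $\rho = 0$ despite the coordinate degeneracy at the pole — here the hypothesis $F'(0)=0$ is exactly what prevents a blow-up, because the ``bad'' term is of the form $F'(\rho)/\rho$, which is bounded by $M$. Solving the ODE with initial angle $\theta(0)=v$ for $v$ in a small interval (or rather, on $\S^1$, using the rotational symmetry of $\Sigma$), I get a one-parameter family of horizontal curves foliating a punctured neighborhood of the pole.

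The subtlety is that the pole itself is a characteristic point and every horizontal leaf must pass through it (or spiral into it), so the naive leaf-space is not a nice $[-\e,\e]^2$. The way around this is to use the rotational invariance: just as in the smooth-torus argument in \textsection\ref{sec:surfaces}, the horizontal leaves are all rotations of a single leaf $\gamma_0$ about the $z$-axis, and near $\textbf{0}$ the leaf $\gamma_0$ can be taken to be a curve that approaches the pole, so that the map $(\text{arclength along }\gamma_0,\ \text{rotation angle})\mapsto\Sigma$ is a bi-Lipschitz parametrization of a neighborhood $U$ of the pole minus the pole, extending continuously to cover the pole. Then I would verify the two hypotheses of Lemma~\ref{lem:surfaces}: hypothesis (2) is the horizontality of the $u$-curves by construction, and hypothesis (1) — the $L'$-bi-Lipschitz estimate on the parametrization $G$ — follows from the ODE analysis, with $L'$ depending only on $M$ (since the curvature of the foliation and the metric distortion are all controlled by the bound $|F'(\rho)/\rho|\le M$). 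One must take a little care that the constants $\kappa$ and $\lambda$ appearing in Lemma~\ref{lem:surfaces} are positive and bounded away from $0$ and $\infty$ in terms of $M$; $\lambda > 0$ because the $u$-curves are horizontal, and $\kappa > 0$ because the $v$-curve through the pole is (a rescaled copy of) the snowflaked circle, which as observed earlier cannot be horizontal.

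Concretely, the steps in order would be: (i) pass to the isometric normalization $F(0)=0$ using vertical-translation invariance of $d_\bH$; (ii) write the horizontality condition for curves on $\Sigma$ in $(\rho,\theta)$ coordinates and extract the ODE $\frac{d\theta}{d\rho} = \text{(something bounded by }M/\rho^2\text{ in a controlled way)}$ — actually after the substitution it is cleaner to parametrize leaves by a regular parameter so the vector field is Lipschitz; (iii) invoke existence/uniqueness and $C^1$-dependence on initial conditions (as in Lemma~\ref{lem:foliation}, citing \cite{ODE}) to get the foliation and the bi-Lipschitz parametrization $G$ of a neighborhood $U$ of the pole, with constant $L' = L'(M)$; (iv) apply Lemma~\ref{lem:surfaces} to $G$ to obtain an $L$-bi-Lipschitz embedding $\Psi$ of $(U\cap\Sigma, d_\bH)$ into $\R^4$ with universal $L$, after shrinking $U$; (v) note $\kappa,\lambda$ are controlled by $M$ so the size of $U$ depends only on $M$.

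I expect the main obstacle to be step (ii)–(iii): making rigorous sense of the horizontal foliation through the coordinate singularity at the pole, i.e.\ showing that the rotationally-symmetric family of horizontal curves actually fills a full neighborhood of $(0,0,F(0))$ in $\Sigma$ and gives a bi-Lipschitz (not just smooth-on-the-punctured-neighborhood) parametrization. The hypothesis $F'(0)=0$ and the finiteness of $M=\limsup_{t\downarrow 0}|F'(t)|/t$ are precisely the inputs that tame this singularity, and the whole argument hinges on tracking that every estimate depends only on $M$; once the foliation is in hand, Lemma~\ref{lem:surfaces} does the rest essentially verbatim.
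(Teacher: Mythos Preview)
Your reduction to Lemma~\ref{lem:surfaces} breaks down at step (iv). That lemma needs an $L'$-bi-Lipschitz parametrization $G:[-\e,\e]^2\to M$ with $G(0,0)=p_0$, and its output is the product-type map $\Psi(G(u,v))=(\lambda u,\kappa^{1/2}\Phi(v))$. But in your construction every horizontal leaf passes through the pole, so your map $(\text{arclength},\text{angle})\mapsto\Sigma$ collapses the entire segment $\{0\}\times[-\e,\e]$ to the single point $p_0$: it is not injective, hence not bi-Lipschitz. In particular the $v$-curve $v\mapsto G(0,v)$ is the constant curve at the pole --- not a snowflaked circle as you assert --- so $\partial_v G(0,0)=0$ and $\kappa=0$. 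The pole is characteristic, and the whole mechanism of Lemmas~\ref{lem:foliation}--\ref{lem:surfaces} is tailored to $\bH$-regular points; a neighborhood of a characteristic point simply cannot be bi-Lipschitz modelled on the product $(\R,|\cdot|)\times(\R,|\cdot|^{1/2})$. (Your analogy with the smooth torus misleads here: the torus has \emph{no} characteristic points, which is exactly why Lemma~\ref{lem:surfaces} applies globally there.)

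The paper replaces the product model by a cone model. Lemma~\ref{lem:BLrev} builds an $L(M)$-bi-Lipschitz self-map $G$ of the disk $B^2(\textbf{0},T)$ sending radial segments to projections of horizontal curves in $\Sigma$; the annulus-by-annulus construction there is precisely the rigorous treatment of the foliation through the coordinate singularity that you correctly identified as the obstacle. But the heart of the proof is then a \emph{direct} estimate --- not an appeal to Lemma~\ref{lem:surfaces} --- showing that $(st,0)\mapsto(G(st),F(|G(st)|))$ is an $L(M)$-bi-Lipschitz map from a disk in $(\{z=0\},d_\bH)$ onto a neighborhood of the pole in $(\Sigma,d_\bH)$, using the cone formula $d_\bH((s_1t_1,0),(s_2t_2,0))\simeq|t_1-t_2|+(t_1\wedge t_2)|s_1-s_2|^{1/2}$ of Lemma~\ref{lem:z=0} and a three-case comparison. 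Since the plane $\{z=0\}$ already embeds in $\R^4$ by \S\ref{sec:z=0}, composition finishes the argument. So the correct fix is not to patch Lemma~\ref{lem:surfaces} but to abandon it near the pole and compare $\Sigma$ directly with $P_0$.
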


Before proving this theorem, we observe an application of this embedding.
Combined with Lemma \ref{lem:welding} and the $\bH$-regularity of $\Sigma\setminus\{(0,0,F(0))\}$, Theorem \ref{thm:revolution} yields the following corollary.

\begin{cor}
Suppose that $F$ is $C^{1,1}$ with $F'(0) = 0$. Any compact subset of $(\Sigma,d_{\mathbb{H}})$ bi-Lipschitz embeds in some Euclidean space $\R^n$.
\end{cor}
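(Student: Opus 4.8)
The plan is to prove the corollary by stitching together the local embedding from Theorem~\ref{thm:revolution} near the characteristic point $(0,0,F(0))$ with a global embedding of the $\bH$-regular part of $\Sigma$, using the welding Lemma~\ref{lem:welding}. First I would observe that by Theorem~\ref{thm:revolution} there is a neighborhood $U \subset \Sigma$ of the unique characteristic point $p_0 := (0,0,F(0))$ such that $(U,d_{\mathbb{H}})$ $L$-bi-Lipschitz embeds into $\R^4$. Let $K \subset \Sigma$ be a given compact set. I would fix a slightly smaller closed neighborhood $U_0 \subset U$ of $p_0$ whose interior still contains $p_0$, and write $K = K_1 \cup K_2$ where $K_1 := K \cap U_0$ and $K_2 := K \setminus \interior(U_0)$. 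Both $K_1$ and $K_2$ are compact (closed subsets of the compact $K$), and $K_2$ is contained in $\Sigma \setminus \{p_0\}$, which consists entirely of $\bH$-regular points: this is essentially the same rotation-invariance argument used for the torus, since a characteristic point forces the whole horizontal circle through it to be horizontal, contradicting the fact that such circles are bi-Lipschitz to rescaled copies of $(\S^1,|\cdot|^{1/2})$ by \textsection\ref{sec:planes}.

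Next I would embed $K_1$: since $K_1 \subset U$ and $(U,d_{\mathbb{H}})$ $L$-bi-Lipschitz embeds in $\R^4$, so does $(K_1,d_{\mathbb{H}})$. For $K_2$, the goal is a global (not merely local) bi-Lipschitz embedding into some $\R^m$. Here I would imitate the argument for the smooth torus verbatim: using the foliation of $\Sigma$ by horizontal curves away from $p_0$ (which exist by the Frobenius theorem applied to a unit section of $T\Sigma \cap H$ on the $\bH$-regular part), together with Lemma~\ref{lem:surfaces} and the snowflake embedding $\Phi$, one obtains a map $\Psi$ that is $L$-bi-Lipschitz on every subset of $\Sigma \setminus \{p_0\}$ of small Euclidean diameter; a compactness argument combined with Lemma~\ref{lem:comparison} then upgrades this to a genuine bi-Lipschitz embedding of the compact set $K_2$ (on which distances are bounded below) into $\R^4$. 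One subtlety is that $\Sigma$ is noncompact and $K_2$ may accumulate toward $p_0$ from outside $U_0$; but $K_2 \cap (\Sigma \setminus \interior(U_0))$ stays at positive distance from $p_0$, so the relevant portion of $\Sigma$ is bounded away from the characteristic locus and the torus-style compactness argument applies after restricting to a compact $\bH$-regular piece of $\Sigma$ containing $K_2$.

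Finally I would invoke Lemma~\ref{lem:welding}: with $X := K$, $X_1 := K_1$, $X_2 := K_2$, both closed in $X$ with $X = X_1 \cup X_2$, $X_1$ bi-Lipschitz in $\R^4$ and $X_2$ bi-Lipschitz in $\R^4$, the lemma yields a bi-Lipschitz embedding of $(K,d_{\mathbb{H}})$ into $\R^{9}$, which gives the claimed $n$. I expect the main obstacle to be the second step: carefully producing a \emph{global} bi-Lipschitz embedding of the (possibly unbounded) $\bH$-regular piece of $\Sigma$ containing $K_2$, rather than merely a local one. The foliation by horizontal curves is global by rotational symmetry, but verifying that the single map $\Psi$ built from this foliation is bi-Lipschitz on all of $K_2$ — not just on small pieces — requires the compactness-plus-Lemma~\ref{lem:comparison} argument exactly as in the torus case, and one must check that the constants do not degenerate as one moves along the noncompact part of $\Sigma$; restricting attention to a compact $\bH$-regular neighborhood of $K_2$ in $\Sigma$ handles this, so the argument goes through.
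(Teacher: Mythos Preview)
Your approach is correct but takes a different, more involved route than the paper's own proof. The paper argues as follows: first it verifies by a short direct computation (using the equations $y=-2xF'(r)/r$ and $x=2yF'(r)/r$ that characterize characteristic points of $\Sigma$) that $(0,0,F(0))$ is the \emph{only} characteristic point; then, given a compact $S\subset\Sigma$, it covers $S$ by finitely many open sets $D_1,\dots,D_k$ --- one coming from Theorem~\ref{thm:revolution} around the characteristic point, and the rest coming from Proposition~\ref{thm:reg} around the remaining ($\bH$-regular) points --- each of which bi-Lipschitz embeds in $\R^4$. A repeated application of Lemma~\ref{lem:welding} then embeds $S$ in $\R^{5k-1}$. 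There is no attempt to produce a single global embedding of the regular part.

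Your plan instead decomposes $K$ into only two pieces and seeks a \emph{global} bi-Lipschitz embedding of the whole $\bH$-regular piece $K_2$ into $\R^4$, by transplanting the torus argument. This buys you a fixed target dimension ($\R^9$ rather than $\R^{5k-1}$), but at the cost of extra work: you must build a single global parameterization $G:[a,b]\times\S^1\to A$ of a compact annular region $A\supset K_2$ by rotated horizontal integral curves, define $\Psi(G(u,s))=(u,\phi(s))$, check via Lemma~\ref{lem:surfaces} that $\Psi$ is locally $L$-bi-Lipschitz with a constant that is uniform over $A$ (this is where compactness and the bound on the $\lambda,\kappa$ of Lemma~\ref{lem:surfaces} away from $p_0$ enter), and finally invoke Lemma~\ref{lem:comparison} plus compactness to handle pairs at distance at least $\e$. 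All of this can indeed be carried out thanks to the rotational symmetry of $\Sigma$, so your argument is sound, but the paper sidesteps it entirely: it never needs a single global $\Psi$, since finitely many local embeddings plus repeated welding already suffice for the (qualitative) statement being proved. Your regularity argument via rotation-invariance is also correct but different from the paper's two-line algebraic check.
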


\begin{proof}
We first show that all points of $\Sigma\setminus\{(0,0,F(0))\}$ are regular points. Indeed, suppose that $(x,y)\neq(0,0)$ is a characteristic point of $\Sigma$. 
Then,
\[ y = - 2x\frac{F'(|(x,y)|)}{|(x,y)|} \quad\text{and}\quad x = 2y\frac{F'(|(x,y)|)}{|(x,y)|}.\]
If $x=0$ or $y=0$, then both must be equal to 0, and this is a contradiction. 
However, if $x\neq 0$ and $y\neq 0$, then $y/x = x/y$ which implies that $x^2+y^2=0$,
and this is again a contradiction.
Thus the only characteristic point of $\Sigma$ is $(0,0,F(0))$.

Let $S$ be a compact subset of $\Sigma$. By Theorem \ref{thm:revolution}, Proposition \ref{thm:reg}, and the compactness of $S$, there exist open sets $D_1,\dots,D_k$ such that each $D_i$ bi-Lipschitz embeds into $\R^4$ and $S \subset \bigcup_{i=1}^k D_i$. By Lemma \ref{lem:welding}, we obtain a bi-Lipschitz embedding of $ \bigcup_{i=1}^k D_i$ (and thus of $S$) into $\R^{5k-1}$.
\end{proof}

%We note here another corollary:
%\begin{cor} The metric space $(\S^2,d_{\bH})$ bi-Lipschitz embeds into some Euclidean space $\R^n$. \end{cor}
%Later, however, we will construct a bi-Lipschitz embedding of this space into $\mathbb{R}^4$, so we save the proof for then (see Proposition~\ref{prop:sphereemb}).

We now prove Theorem~\ref{thm:revolution}.
The idea of the proof is as follows.
In Lemma~\ref{lem:BLrev},
we construct a bi-Lipschitz homeomorpism of a disc centered at the origin in $\mathbb{R}^2$
such that any radial segment in the disc is mapped to a curve which is the projection of a horizontal curve in $\Sigma$.
Since $\Sigma$ is a surface of revolution,
rotations of this horizontal curve foliate the surface.
We then complete the proof of Theorem~\ref{thm:revolution}
by re-parameterizing $\Sigma$ in terms of the horizontal curve
(which embeds into $\mathbb{R}$)
and the snowflaked unit circle
(which embeds into $\mathbb{R}^3$).

%\begin{lem}
%There exists $T>0$ with the following property. For each $s\in\S^1$ there exists a curve $\g_s:[0,T] \to \R$ with $\g_s(t) = (x_s(t),y_s(t))$ such that 
%\begin{enumerate}
%\item $\g_s(0) = (0,0)$ and $\g_s'(0) = s$,
%\item for all $t\in[0,T]$, $|\g_s'(t)| = 1$,
%\item the curve $(x_s(t),y_s(t),F(x_s(t)^2+y_s(t)^2))$ is horizontal.
%\end{enumerate}
%\end{lem}

\begin{lem}\label{lem:BLrev}
Let $F:[0,+\infty) \to \R$ be as in Theorem \ref{thm:revolution}. There exist $T>0$, $L=L(M)>1$, and an $L$-bi-Lipschitz map $G:B^2(0,T) \to B^2(0,T)$ with $G(0)=0$, such that for each $s\in\S^1$, the curve
\[ t \mapsto (G(ts), F(|G(ts)|))\]
is horizontal.
Moreover,
$|G(st)|=|G(s't)|$ for any $s,s' \in \S^1$ and $t \in [0,T)$.
\end{lem}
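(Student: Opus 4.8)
The plan is to take $G$ to be a \emph{twist map} of the disc: a rotation by an angle that depends only on the radius, calibrated so that every radial segment lifts to a horizontal curve on $\Sigma$.

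First I would extract the only consequence of the hypotheses that is actually used. Since $F$ is $C^{1,1}$ with $F'(0)=0$ and $\limsup_{t\downarrow 0}|F'(t)|/t\le M$, one may fix $T>0$ small enough that $|F'(t)|\le 2Mt$ for all $t\in(0,T)$. I would then define $\beta:(0,T]\to\mathbb{R}$ by $\beta(t)=\int_{T}^{t}\frac{2F'(\tau)}{\tau^{2}}\,d\tau$, so that $\beta$ is $C^{1}$ on $(0,T]$ with $\beta(T)=0$ and — this is the single estimate that drives everything — $t\,|\beta'(t)|=2|F'(t)|/t\le 4M$ for every $t\in(0,T)$. (As $t\downarrow 0$ the function $\beta$ may run off to $\pm\infty$; this is just the familiar spiralling of horizontal curves into a characteristic point, and it causes no harm.) Then set $G(0)=0$ and $G(w)=R_{\beta(|w|)}w$ for $w\neq 0$, where $R_{\alpha}$ denotes rotation of $\mathbb{R}^{2}$ by the angle $\alpha$. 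Immediately $G(0)=0$ and $|G(w)|=|w|$ — so $|G(st)|=|G(s't)|=t$ and the ``Moreover'' clause is automatic — and $G$ maps $B^{2}(0,T)$ bijectively onto itself with inverse $w\mapsto R_{-\beta(|w|)}w$.

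Next I would verify horizontality. Fixing $s\in\mathbb{S}^{1}$ and writing $G(ts)$ in polar coordinates as radius $t$ and angle $\arg(s)+\beta(t)$, the curve $c(t)=\big(G(ts),F(|G(ts)|)\big)=\big(R_{\beta(t)}(ts),F(t)\big)$ has planar components $t\cos(\arg(s)+\beta(t))$ and $t\sin(\arg(s)+\beta(t))$, whose derivatives are bounded in modulus by $1+t|\beta'(t)|\le 1+4M$ on $(0,T)$; hence these components, as well as $z=F(t)$, are Lipschitz, so $c$ is absolutely continuous. By the polar identity $x y'-x'y=t^{2}\,\frac{d}{dt}\big(\arg(s)+\beta(t)\big)=t^{2}\beta'(t)$ one gets $\tfrac12\big(x(t)y'(t)-x'(t)y(t)\big)=\tfrac12 t^{2}\beta'(t)=F'(t)=z'(t)$ for a.e.\ $t$, which is exactly the horizontality condition.

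The substance of the argument, and the only genuinely delicate step, is the bi-Lipschitz estimate for $G$ with a constant depending only on $M$. The naive approach — bound $\|DG^{\pm1}\|$ and invoke the mean value theorem — fails, because $G$ is not differentiable at $0$ and $B^{2}(0,T)\setminus\{0\}$ is not convex. Instead I would estimate distances directly. For $w,w'\in B^{2}(0,T)$ write $r=|w|\ge r'=|w'|$, $A=\arg(w)-\arg(w')$, and $B=\beta(r')-\beta(r)$; then $|w-w'|^{2}=(r-r')^{2}+2rr'(1-\cos A)$ and $|G(w)-G(w')|^{2}=(r-r')^{2}+2rr'\big(1-\cos(A+B)\big)$, and $1-\cos\theta\simeq\langle\theta\rangle^{2}$ where $\langle\theta\rangle$ is the distance from $\theta$ to $2\pi\mathbb{Z}$. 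If $r>2r'$, the term $(r-r')^{2}$ dominates both right-hand sides and the two quantities are comparable with universal constants. If $r\le 2r'$, then $rr'\simeq r'^{2}\simeq r^{2}$ and the crucial bound is
\[
\sqrt{rr'}\,|B|\ \le\ r\int_{r'}^{r}|\beta'(\tau)|\,d\tau\ \le\ 4Mr\ln\!\frac{r}{r'}\ \lesssim_{M}\ |r-r'|,
\]
using $r/r'\le 2$; together with the triangle inequality $|\langle A+B\rangle-\langle A\rangle|\le\langle B\rangle\le|B|$ for the quotient metric on $\mathbb{R}/2\pi\mathbb{Z}$, this gives $rr'\langle A+B\rangle^{2}\simeq_{M}rr'\langle A\rangle^{2}$ up to an additive $O_{M}\big((r-r')^{2}\big)$ error, whence $|G(w)-G(w')|\simeq_{M}|w-w'|$ (the case $w'=0$ being trivial since $|G(w)|=|w|$). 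This produces the asserted constant $L=L(M)$.
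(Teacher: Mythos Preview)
Your proof is correct. The underlying map is the same as the paper's --- in both cases the image of a radial segment is a spiral whose angular coordinate satisfies $\theta'(t)=2F'(t)/t^{2}$, i.e.\ an integral curve of the paper's vector field $V(ts)=s+2(F'(t)/t)\hat{s}$ --- but your presentation is more direct. The paper constructs $G$ by following integral curves of $V$ annulus by annulus and gluing with rotations (to cope with the infinite winding near the origin), and then sets $G(ts)=\gamma_s(2t)$; you bypass this by writing the map down in closed form as the twist $w\mapsto R_{\beta(|w|)}w$, letting $\beta$ diverge at $0$, and defining $G(0)=0$ separately. For the bi-Lipschitz estimate, the paper inserts an intermediate point $G(t_2s_1)$ and splits according to whether $10M|t_1-t_2|\gtrless t_1|s_1-s_2|$; you instead use the law-of-cosines identity $|w-w'|^{2}=(r-r')^{2}+2rr'(1-\cos A)$ and split according to $r>2r'$ versus $r\le 2r'$. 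The decisive quantitative input is identical in both arguments: $t|\beta'(t)|\le 4M$ (equivalently $|V|\lesssim M$), which bounds the angular drift $\sqrt{rr'}\,|B|$ by a constant times $|r-r'|$ when $r\le 2r'$. Your route is a bit cleaner and avoids the annulus bookkeeping; the paper's route has the advantage of making the connection to the horizontal foliation via $T_p\Sigma\cap H_p$ more explicit.
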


\begin{proof}
%It suffices to construct $G(st)$ for $s=(1,0)$. Then, revolving the curve around the $z$-axis and applying the invariance of $d_{\mathbb{H}}$ under rotations about the $z$-axis, we obtain the lemma. 
For each $t\geq 0$ consider the point $w_t = (t,0,F(t))$. Then
\[ 
T_{w_t}\Sigma \cap H_{w_t} =  \left\{\left( x , 2\frac{F'(t)}{t}(x-t),F(t) + F'(t)(x-t)\right) : x\in\R\right\}.
\]
Consider now the vector field $V:\R^2\setminus\{0\} \to \R^2$ given by
\[ V(ts) = s + 2(F'(t)/t)\hat{s}\]
where $t>0$, $s,\hat{s}\in\S^1$ and $\hat{s}$ is the rotation of $s$ by an angle of $\pi / 2$.
Note that $V$ has been defined in such a way that, 
when any integral curve $\gamma$ of $V$ is lifted to a curve on $\Sigma$,
this lifted curve is horizontal.
That is,
the curve $t \mapsto (\gamma(t),F(\gamma(t)))$ is a horizontal curve.

Fix $T>0$ such that 
$|F'(t)|/t \leq 2M$ for every $t \in (0,T)$.
That is, 
\[ 
|V(p)| = \sqrt{1+ \left(2\frac{F'(|p|)}{|p|}\right)^2}\in [1, 5M]
\quad
\text{for all } 
p\in B^2(\textbf{0},T)\setminus\{\textbf{0}\}.
\]
Then, for all $p\in B^2(\textbf{0},T)\setminus\{\textbf{0}\}$ we have
\begin{equation}\label{eq:NT}
V(p)\cdot p = |p|  \geq (5M)^{-1}|p||V(p)|. 
\end{equation} 
%Since $V|\R^2\setminus\{0\}$ is Lipschitz, it follows that every integral curve $\g$ is $C^1$. We show that, in fact, every integral curve is a chord-arc curve, that is, for all $x,y \in \gamma$, the subarc $\g(x,y)$ of $\g$ that joins $x$ with $y$ satisfies
%\[ \ell(\g(x,y)) \leq C|x-y|.\]
From (\ref{eq:NT}) we have that if $\gamma$ is an integral curve of $V$, then 
\[ (|\gamma|^2)' = \gamma'\cdot \gamma = |\gamma| = (|\gamma|^2)^{1/2}\] 
which implies that $(|\gamma|)' = 1/2$. Therefore, given $0<t_1 \leq t_2$ in the domain of $\gamma$,
\begin{equation}\label{eq:growth}
 |\g(t_2)| - |\g(t_1)| = \frac12(t_2-t_1).
\end{equation}

We construct a particular integral curve in $B(\textbf{0},T)$ as follows. 
For each $i\in \N$ let $S_i$ be the circle $\partial B^2(\textbf{0},2^{1-i}T)$, and let $A_i$ be the closed annulus $A(\textbf{0};2^{-i}T,2^{1-i}T)$. 
By (\ref{eq:growth}), every integral curve of $V|_{A_i}$ is a $C^1$ curve that joins $S_i$ with $S_{i+1}$
defined on an interval of length $2^{1-i}T$.

For each $i\in\N$, let $\tilde{\g}^{(i)}$ be the integral curve of $V|_{A_i}$ joining $S_{i+1}$ to the point $(2^{1-i}T,0)$
defined on the interval $[2^{1-i}T,2^{2-i}T]$. 
The vector field $V$ is defined in such a way that rotating $\tilde{\g}^{(i)}$ produces all integral curves of $V|_{A_i}$. 
Appropriately rotating each $\tilde{\g}^{(i)}$, we obtain a curve $\g_0$ that joins $\textbf{0}$ with the point $(T,0)$ 
such that $\tilde{\g}|_{A_i}$ is an integral curve of $V|_{A_i}$. 
Note that, by a limiting argument, (\ref{eq:growth}) also holds if one or both of the $t_i$ is zero.
Hence the domain of $\g_0$ is $[0,2T]$.

For each $s\in\S^1$, 
let $\gamma_s(t):[0,2T] \to \mathbb{R}^2$ be the curve obtained by rotating $\g_0$ 
in such a way that $\gamma_s(2T) = Ts \in \partial B^2(\mathbf{0},T)$.
By the rotation invariance of $V$, the integral curves of $V$ are exactly the curves $\{\g_s(t):s\in\S^1\}$ which join $\textbf{0}$ with $\partial B^2(\textbf{0},T)$. 
As discussed above, 
the lift of each $\g_s$ to $\Sigma$ is a horizontal curve since $\g_s'(t) =V(\g_s(t))$.

Define now $G:B^2(0,T) \to B^2(0,T)$ with $G(ts) = \g_s(2t)$. To show that $G$ is bi-Lipschitz, fix $s_1,s_2\in\S^1$ and $0\leq t_1\leq t_2 \leq T$. 
By the triangle inequality, the Mean Value Theorem, and (\ref{eq:growth}), 
for some $c \in [2t_1,2t_2]$,
\begin{align*} 
|G(t_1s_1)-G(t_2s_2)| &\leq |G(t_2s_1)-G(t_2s_2)| + |G(t_1s_1)-G(t_2s_1)|\\ 
&= |\g_{0}(2t_1)||s_1-s_2| + |\g_{s_1}(2t_1)-\g_{s_1}(2t_2)| \\
&= t_1|s_1-s_2| + |\g'_{s_1}(c)| |t_1-t_2| \\
&\leq t_1|s_1-s_2| + 5M |t_1-t_2| \\
&\simeq_M |t_1s_1 - t_2s_2|.
\end{align*}

For the other inequality we consider two cases.

\emph{Case 1.} Suppose that $10M|t_1-t_2| \geq t_1|s_1-s_2|$. By (\ref{eq:growth}),
\begin{align*} 
|G(t_1s_1)-G(t_2s_2)| 
\geq |G(t_2s_2)| - |G(t_1s_1)| 
= |\gamma_0(2t_2)| - |\gamma_0(2t_1)|
&= |t_1-t_2| 
\gtrsim_M |s_1t_1 - s_2t_2|.
\end{align*}

\emph{Case 2.} Suppose that $10M|t_1-t_2| < t_1|s_1-s_2|$. By (\ref{eq:growth}) and the Mean Value Theorem, for some $c\in[2t_1,2t_2]$,
\begin{align*}
|G(t_1s_1)-G(t_2s_2)| &\geq |G(t_2s_1) - G(t_2s_2)| - |G(t_2s_1)-G(t_1s_1)|\\
&=  |\g_{0}(2t_2)||s_1-s_2| - |\g_{s_1}'(c)||t_1-t_2| \\
&\geq t_1|s_1-s_2| - 5M|t_1-t_2|\\
&\geq \tfrac12 t_1|s_1-s_2|\\
&\gtrsim_M |s_1t_1 - s_2t_2|. \qedhere 
\end{align*}
\end{proof}

 We can now prove Theorem \ref{thm:revolution}. 

\begin{proof}[{Proof of Theorem \ref{thm:revolution}}]
For a point $w\in\R^3$, we denote by $\pi(w)$ the projection of $w$ on $\{z=0\}$. Since $d_{\bH}$ is invariant under vertical translations, we may assume that $F(0)=0$. Define 
\[ \Sigma_0 = \{w\in\Sigma : \pi(w) \in B^2(\textbf{0},T)\}\]
and choose $\e>0$ such that $B^3(\textbf{0},\e)\cap\Sigma \subset \Sigma_0$. Define now the homeomorphism 
\[ \Psi :  \{z=0\}\cap B^3(\textbf{0},T) \to \Sigma_0 \quad\text{as}\quad \Psi(st,0) = (G(st), F(|G(st)|))\] 
where $s\in\S^1$ and $t\in[0,T]$. 
We show that, if $T$ is chosen small enough, then the map $\Psi$ is $L$-bi-Lipschitz for some $L=L(M)>1$. 

Fix $s_1,s_2\in\S^1$, $0< t_1\leq t_2\leq T$ and points $w_1 = (s_1t_1,0) $ and $w_2 = (s_2t_2,0)$. 
By Lemma \ref{lem:z=0} it suffices to show that
\[ d_{\mathbb{H}}(\Psi(w_1),\Psi(w_2)) \simeq_M (t_2-t_1) + t_1|s_1-s_2|^{1/2}.\]
For the rest of the proof denote $w_1' = \Psi(w_1)$ and $w_2' = \Psi(w_2)$. 
We write $G(st) = (x(st),y(st))$. 
Recall that $G(st)=G(s't)$ lie on the same circle centered at the origin.
Therefore, 
since $d_{\bH}$ and $G(st)$ are both invariant under rotations, we may assume that $x(s_1t_1)=x(s_2t_1)$. Then
\begin{align}
d_{\bH}(w_1',w_2') &\simeq |G(s_1t_1) - G(s_2t_2)|\notag\\
&\quad + \Big| F(|G(s_1t_1)|) - F(|G(s_2t_2)|) + \tfrac12 x(s_1t_1)y(s_2t_2) - \tfrac12 x(s_2t_2)y(s_1t_1)\Big|^{1/2}\notag\\
%&\simeq_M  |s_1t_1 - s_2t_2| +  | F(|G(s_2t_1)|) - F(|G(s_2t_2)|) + \frac12 %x(s_2t_1)y(s_2t_2) - \frac12 x(s_2t_2)y(s_2t_1)\\
%& \qquad + \frac12 x(s_2t_2)(y(s_2t_1) - y(s_1t_1)) |^{1/2}\\
&\simeq |t_1-t_2| + t_1|s_1-s_2| \label{eq:est1} \\
&\quad +  \Big| F(|G(s_2t_1)|) - F(|G(s_2t_2)|) + \tfrac12 x(s_2t_1)y(s_2t_2) - \tfrac12 x(s_2t_2)y(s_2t_1)\notag\\
& \qquad + \tfrac12 x(s_2t_2)(y(s_2t_1) - y(s_1t_1)) \Big|^{1/2}.\notag
\end{align}

%Fix $s\in\S^1$, we write below $x'(st) = \partial_t x(st)$. 
Since the points $(G(s_2t_1), F(|G(s_2t_1)|))$ and $(G(s_2t_2), F(|G(s_2t_2)|))$ 
lie on the horizontal curve 
$t \mapsto (G(s_2t),F(|G(s_2t)|))$,  
Lemma \ref{lem:horiz} gives a constant $C_1 = C_1(M) > 0$ such that
\begin{align}
\left| F(|G(s_2t_1)|) - F(|G(s_2t_2)|) + \tfrac12 x(s_2t_1)y(s_2t_2) - \tfrac12 x(s_2t_2)y(s_2t_1) \right|^{1/2} &\lesssim \|\partial_t G(s_2t)\|_{\infty} |t_1-t_2| \notag \\
&= \|V(st)\|_{\infty}|t_1-t_2| \notag \\
&\label{eq:est2} \leq C_1 |t_1-t_2|
\end{align}

Set $\e = \min\{2^{-3/2}, (2C_1)^{-1},(12L^2)^{-1}\}$
where $L$ is the bi-Lipschitz constant of $G$.
(Note that $\e$ depends only on $M$.)
Also, we may choose $\d>0$ sufficiently small so that $|x(s_2t_1)| > |y(s_2t_1)|$ whenever $|s_1-s_2| < \d$. 
Indeed, we assumed earlier that $x(s_1t_1) = x(s_2t_1)$ so that the points $G(s_1t_1)$ and $G(s_2t_1)$ are reflections of one another across the $x$-axis.
Since $G$ is $L(M)$-bi-Lipschitz, such a $\d$ can be chosen depending only on $M$. 
Therefore, $|x(s_2t_1)| \geq \tfrac12 |G(s_2t_1)|$ when $|s_1-s_2| < \d$.
We consider the following cases.

\emph{Case 1:} $|s_1-s_2| \geq \d$. By Lemma \ref{lem:z=0}, $d_{\mathbb{H}}(w_1,w_2) \simeq_{\d} t_2$. On the other hand,
\begin{align*}
d_{\mathbb{H}}(w_1',w_2') \geq
 %|\pi(w_1') - \pi(w_2')|=
 |G(s_1t_1) - G(s_2t_2)| \gtrsim_M |s_1t_1-s_2t_2| &\simeq |t_1-t_2| + t_1|s_1-s_2| \simeq_{\d} t_2.
\end{align*}
Moreover, 
%applying the Mean Value Theorem, 
there exist $0 < \xi_1 < |G(s_1t_1)|$ and $0 < \xi_2 < |G(s_2t_2)|$ such that 
\begin{align*}
d_{\mathbb{H}}(w_1',w_2') &\leq d_{\mathbb{H}}(w_1',0) +d_{\mathbb{H}}(w_2',0)\\
&=|G(s_1t_1)| + |G(s_2t_2)| + |F(|G(s_1t_1)|) - F(0)|^{1/2}+ |F(|G(s_2t_2)|) - F(0)|^{1/2}\\
&\lesssim_M (t_1+t_2) + |F'(\xi_1)|^{1/2}|G(s_1t_1)|^{1/2} + |F'(\xi_2)|^{1/2}|G(s_2t_2)|^{1/2}.
\end{align*}
Assuming now that $T$ is sufficiently small we have
\begin{align*}
d_{\mathbb{H}}(w_1',w_2') &\lesssim_M t_2 + M^{1/2}(|\xi_1|^{1/2}|G(s_1t_1)|^{1/2} + |\xi_2|^{1/2}|G(s_2t_2)|^{1/2})\\
&\lesssim_M t_2 + (|G(s_1t_1)| + |G(s_2t_2)|)\\
&\lesssim_M t_2.
\end{align*}
Therefore, $d_{\mathbb{H}}(w_1',w_2') \simeq_{M,\d} t_2 \simeq_{M,\d} |w_1-w_2|$.

\emph{Case 2:} $|s_1-s_2| < \d$ and $|t_1-t_2| \geq \e t_1^{1/2}t_2^{1/2}|s_1-s_2|^{1/2}$. Note that 
\begin{align*}
|x(s_2t_2)(y(s_2t_1) - y(s_1t_1)) |^{1/2} 
\lesssim_M |x(s_2t_2)|^{1/2}t_1^{1/2}|s_1-s_2|^{1/2} 
&\lesssim_M t_1^{1/2}t_2^{1/2} |s_1-s_2|^{1/2} \lesssim_M |t_1 - t_2|.
\end{align*}
Hence by (\ref{eq:est1}), (\ref{eq:est2}), and Lemma~\ref{lem:z=0}
$$
d_{\mathbb{H}}(w_1',w_2') 
\lesssim_M 
|t_1 - t_2| + t_1|s_1-s_2| 
\lesssim
|t_1 - t_2| + t_1|s_1-s_2|^{1/2}
\lesssim
d_{\mathbb{H}}(w_1,w_2).
$$
Moreover,
\begin{align*}
d_{\mathbb{H}}(w_1,w_2)
\lesssim 
|t_1 - t_2| + t_1|s_1-s_2|^{1/2}
\leq
|t_1 - t_2| + t_1^{1/2}t_2^{1/2}|s_1-s_2|^{1/2}
&\lesssim_M
|t_1 - t_2| \lesssim_M
d_{\mathbb{H}}(w_1',w_2').
\end{align*}

\emph{Case 3:} $|s_1-s_2| < \d$ and $|t_1-t_2| < \e t_1^{1/2}t_2^{1/2}|s_1-s_2|^{1/2}$.  By (\ref{eq:est1}), (\ref{eq:est2}), and the choice of $\e$,
\begin{align*}
d_{\bH}(w_1',w_2') &\lesssim_M |t_1-t_2| + t_1|s_1-s_2| + |x(s_2t_2)|^{1/2}|y(s_2t_1)-y(s_1t_1)|^{1/2} + |t_1-t_2|\\ 
&\lesssim |t_1-t_2| + t_1|s_1-s_2| + |x(s_1t_2)|^{1/2}|y(s_2t_1)-y(s_1t_1)|^{1/2} \\
&\hspace{1.5in} + |x(s_2t_2) - x(s_1t_2)|^{1/2}|y(s_2t_1)-y(s_1t_1)|^{1/2}\\
&\lesssim_M |t_1-t_2| + t_1^{1/2}t_2^{1/2}|s_1-s_2| + t_1^{1/2}t_2^{1/2}|s_1-s_2|^{1/2} + t_1^{1/2}t_2^{1/2}|s_1-s_2|\\
&\lesssim |t_1-t_2| + t_1^{1/2}t_2^{1/2}|s_1-s_2|^{1/2}
\end{align*}
while
\begin{align*}
d_{\bH}(w_1,w_2)
&\gtrsim |t_1-t_2| + t_1|s_1-s_2|^{1/2} \\
&= |t_1-t_2| + t_2|s_1-s_2|^{1/2} - (t_2 - t_1)|s_1-s_2|^{1/2}\\
&> |t_1-t_2| + t_1^{1/2}t_2^{1/2}|s_1-s_2|^{1/2} - \sqrt{2} \e t_1^{1/2}t_2^{1/2}|s_1-s_2|^{1/2}\\
&\geq |t_1-t_2| + t_1^{1/2}t_2^{1/2}|s_1-s_2|^{1/2}.
\end{align*}

For the other direction, note that,
$|s_1-s_2| < \d$ gives $|x(s_2t_1)| \geq \tfrac12 |G(s_2t_1)|$. Thus,
\begin{align*}
|x(s_2t_2)| 
\geq |x(s_2t_1)| - |x(s_2t_2) - x(s_2t_1)| 
&\geq \tfrac12 |G(s_2t_1)| - |G(s_2t_2) - G(s_2t_1)| \\
&\geq \tfrac12 |G(s_2t_2)| - \tfrac32 |G(s_2t_2) - G(s_2t_1)|\\
&\geq \tfrac{1}{2L} t_2 - \tfrac{3L}{2}|t_1 - t_2|\\
&> \tfrac{1}{4L} t_2
\end{align*}
since $\e \leq 1/(12L^2)$. Therefore,
\begin{align*}
d_{\bH}(w_1',w_2') 
&\gtrsim_M |t_1-t_2| + \tfrac{1}{2}|x(s_2t_2)|^{1/2} |y(s_2t_1)-y(s_1t_1)|^{1/2} - C_1|t_1-t_2|\\
&\gtrsim_M |t_1-t_2| + t_1^{1/2}t_2^{1/2} |s_1 -s_2|^{1/2} - C_1|t_1-t_2|\\
&> |t_1-t_2| + t_1^{1/2}t_2^{1/2} |s_1 -s_2|^{1/2} - C_1\e t_1^{1/2}t_2^{1/2}|s_1-s_2|^{1/2}\\
&\geq |t_1-t_2| + t_1^{1/2}t_2^{1/2} |s_1 -s_2|^{1/2}\\
&\geq |t_1-t_2| + t_1 |s_1 -s_2|^{1/2} \\
&\gtrsim d_{\bH}(w_1,w_2)
\end{align*}
and the proof is complete. 
\end{proof}

\subsection{Kor\'{a}nyi spheres}\label{sec:spheres}

Following the ideas in \textsection\ref{sec:revolution}, we show directly that the Kor\'{a}nyi sphere 
\[ \mathbb{S}_K = \{ p \in \mathbb{H} \, : \, d_{K}(p,0) = 1\}\]
admits a bi-Lipschitz embedding into $\mathbb{R}^4$.
(Note that this is the sphere in the metric $d_K$ rather than $d_{\bH}$.)
Since the metric is left invariant on $\mathbb{H}$ and commutes with dilations, 
this shows that any Kor\'{a}nyi sphere admits a bi-Lipschitz embedding into $\mathbb{R}^4$.

\begin{prop}
\label{prop:ksphereemb}
There exists a bi-Lipschitz embedding of $(\S_K,d_{\bH})$ into $\R^4$.
\end{prop}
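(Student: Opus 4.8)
The idea is that $\S_K$ is a surface of revolution with exactly two characteristic points, and that one can mimic the proof of Theorem~\ref{thm:revolution} while carefully stitching the two polar caps to the intervening annulus inside a single copy of $\R^4$. I would begin with the elementary facts about $\S_K=\{(x,y,z):(x^2+y^2)^2+16z^2=1\}$. Its defining function has nonvanishing gradient on $\S_K$, so $\S_K$ is a compact real-analytic surface of revolution about the $z$-axis; comparing $\nabla\big((x^2+y^2)^2+16z^2\big)$ with the normal $(\tfrac12 y,-\tfrac12 x,1)$ to $H_{(x,y,z)}$ shows that these are parallel only when $x=y=0$, so $p\in\S_K$ is characteristic precisely when $p\in\{N,S\}$, where $N=(0,0,\tfrac14)$ and $S=(0,0,-\tfrac14)$; every other point is $\bH$-regular. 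Near $N$ the surface is the graph of $F(\rho)=\tfrac14\sqrt{1-\rho^{4}}$, which is analytic for $\rho<1$, has $F'(0)=0$, and satisfies $\limsup_{\rho\downarrow0}|F'(\rho)|/\rho=0$, so $M=1$ in the notation of Theorem~\ref{thm:revolution}. Applying that theorem to any $C^{1,1}$ extension of $F$ to $[0,\infty)$ yields a universal constant $L_0$, a neighborhood $U_N\subset\S_K$ of $N$, and an $L_0$-bi-Lipschitz embedding $\iota_N\colon(U_N,d_{\bH})\to\R^4$; moreover, reading off its proof, $U_N$ is parametrized by $(\rho,s)\in[0,T)\times\S^1$ through the horizontal curves $\rho\mapsto\Psi^N(\rho s,0):=\big(G^N(\rho s),F(|G^N(\rho s)|)\big)$, one has $d_{\bH}\big(\Psi^N(\rho_1 s_1,0),\Psi^N(\rho_2 s_2,0)\big)\simeq|\rho_1-\rho_2|+(\rho_1\wedge\rho_2)|s_1-s_2|^{1/2}$, and $\iota_N$ has the form $\Psi^N(\rho s,0)\mapsto(\rho\,\phi(s),\rho)$. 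Since $(x,y,z)\mapsto(-x,y,-z)$ is a $d_{\bH}$-isometry that fixes $\S_K$ and exchanges $N$ and $S$, the analogous data $U_S,\iota_S,\Psi^S$ exist near $S$.

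Next I would build a global foliation of $\S_K\setminus\{N,S\}$ by horizontal curves. There the line field $\ell(p)=T_p\S_K\cap H_p$ is one-dimensional, smooth, and rotation-invariant; the only horizontal direction with vanishing third component is the radial one, which is never tangent to $\S_K$ off the poles, so every $v\in\ell(p)$ has nonzero third component, and normalizing the one with negative third component gives a smooth rotation-invariant unit vector field $V\in\ell$. Then $z$ decreases strictly along every integral curve of $V$, so each maximal integral curve limits to $N$ at one end and $S$ at the other and meets the equator $\S_K\cap\{z=0\}$ exactly once; this yields a product structure on $\S_K\setminus\{N,S\}$. Using the uniqueness of horizontal curves (so that near a pole these leaves are exactly the curves $\Psi^N(\rho s,0)$ above) together with a suitable reparametrization, I obtain a continuous surjection $\Gamma\colon\S^1\times[0,\ell]\to\S_K$ with $\Gamma(\cdot,0)\equiv N$, $\Gamma(\cdot,\ell)\equiv S$, each curve $t\mapsto\Gamma(s,t)$ horizontal, $\Gamma$ a bi-Lipschitz (Euclidean) diffeomorphism onto its image on each $\S^1\times[\e,\ell-\e]$, and $\Gamma(s,t)=\Psi^N(t\,s,0)$ for small $t$ while $\Gamma(s,t)=\Psi^S\big((\ell-t)\,h(s),0\big)$ for $t$ near $\ell$, where $h$ is a diffeomorphism of $\S^1$.

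Now I would define $\Psi\colon\S_K\to\R^4=\R^3\times\R$ by $\Psi(\Gamma(s,t))=(\alpha(t)\phi(s),\beta(t))$, where $\phi$ is the embedding of \eqref{snowflake2}, $\alpha\in C^1([0,\ell])$ is positive on $(0,\ell)$ with $\alpha(t)=t$ near $0$, $\alpha(t)=\ell-t$ near $\ell$, and $\alpha\simeq1$ on $[\e,\ell-\e]$, and $\beta$ is strictly increasing with $\beta(t)=t$ near $0$, $\beta(t)=C-(\ell-t)$ near $\ell$ for a large constant $C$, and $\beta'\simeq1$ in between. Then $\Psi$ extends continuously and injectively over $N$ and $S$ since $\alpha$ vanishes there. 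Near $N$ the map $\Psi$ equals $\iota_N$ composed with a Euclidean isometry of $\R^4$ (a coordinate permutation plus a translation), hence is $L_0$-bi-Lipschitz; near $S$ the same estimate follows from $|h(s_1)-h(s_2)|\simeq|s_1-s_2|$, Lemma~\ref{lem:snowcone}, and the $d_{\bH}$-estimate along $\Psi^S$; and on the intermediate annulus $\Psi$ is locally bi-Lipschitz with a uniform constant by Lemma~\ref{lem:surfaces} applied with $\phi$ in place of $\Phi$ (as in \textsection\ref{sec:surfaces} and the torus argument), because there the quantities $\kappa,\lambda$ attached to $\Gamma$ are bounded above and below by compactness. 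Thus $\Psi$ is locally $L$-bi-Lipschitz near every point of $\S_K$ for a fixed $L$; a Lebesgue-number argument promotes this to $L$-bi-Lipschitz on every subset of $\S_K$ of Euclidean diameter at most some $\d>0$, while for $|w-w'|\geq\d$ both $d_{\bH}(w,w')$ and $|\Psi(w)-\Psi(w')|$ are comparable to $1$ by Lemma~\ref{lem:comparison}, compactness, and injectivity of $\Psi$. Hence $\Psi$ is bi-Lipschitz.

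The principal obstacle is the middle step: producing the global horizontal foliation $\Gamma$ and, above all, controlling its behavior near the two poles so that it dovetails with the polar embeddings $\iota_N,\iota_S$ — that is, checking that the leaves approach the poles radially enough (which is exactly where $F'(\rho)/\rho\to0$ enters) for $\Psi$ to coincide near each pole, up to a Euclidean isometry, with the embedding furnished by Theorem~\ref{thm:revolution}. The rest is either direct computation or an application of Lemmas~\ref{lem:comparison}, \ref{lem:snowcone}, \ref{lem:surfaces} and Theorem~\ref{thm:revolution}.
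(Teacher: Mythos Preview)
Your proposal is correct and follows essentially the same route as the paper's sketch: build a rotation-invariant horizontal foliation of $\S_K\setminus\{N,S\}$ by integral curves of the line field $T_p\S_K\cap H_p$, define a single embedding of the form $(\text{height parameter},\ \text{radius}\cdot\phi(s))$, invoke Theorem~\ref{thm:revolution} near each pole, Lemma~\ref{lem:surfaces} on the regular annulus, and then use compactness together with Lemma~\ref{lem:comparison} to pass from local to global. The paper's presentation is slightly leaner---it parametrizes via the double cone $\mathcal{C}=\{(s(1-|t|),t)\}$ and writes $\Psi(G(s(1-|t|),t))=(t,(1-|t|)\phi(s))$ directly, with the explicit vector field $V$ written out---whereas you introduce auxiliary $C^1$ functions $\alpha,\beta$ and a circle diffeomorphism $h$ to splice the polar charts; but these differences are cosmetic, and your treatment of the south pole via $|h(s_1)-h(s_2)|\simeq|s_1-s_2|$ and Lemma~\ref{lem:snowcone} (rather than claiming $\Psi$ equals $\iota_S$ up to isometry, which would require $\phi$ to intertwine rotations) is the correct way to handle that step.
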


Since the proof is similar to that of Theorem \ref{thm:revolution}, we only sketch the steps and leave the details to the reader.

\begin{proof}
Given $s\in \S^1$ and $t\in[-1,1]$, we define 
\[ \ell_s(t) := (s(1-|t|),t) \in \R^3 \qquad\text{and}\qquad \mathcal{C} = \{\ell_s(t) : t\in[-1,1], s\in\S^1\}.\]
The arc $\ell_s([-1,1])$ is the \emph{longitude of $\mathcal{C}$ in the direction of $s$} with $\ell_s(-1)$ (resp. $\ell_s(1)$) being the south pole $\mathcal{S} = (0,0,-1)$ (resp. north pole $\mathcal{N} = (0,0,1)$) for all $s\in \S^1$. 
Given $\e\in (0,1)$, define the sets
\[ \mathcal{N}(\e) = \{\ell_s(t) : s\in\S^1, 1-\e\leq t \leq 1\} \quad\text{and}\quad \mathcal{S}(\e) = \{\ell_s(t) : s\in\S^1, -1 \leq t \leq -1+\e\}.\]
Define also the two poles of $\mathbb{S}_K$, $\mathcal{N}^* = (0,0,1/4)$ and $\mathcal{S}^* = (0,0,-1/4)$.

Working as in Lemma \ref{lem:BLrev}, we can show that there exists a bi-Lipschitz homeomorphism $G:\mathcal{C} \to \S_K$ such that $G(\mathcal{S}) = \mathcal{S}^*$, $G(\mathcal{N}) = \mathcal{N}^*$ and every longitude $\ell_s([-1,1])$ is mapped to a horizontal curve. In fact every curve $t \mapsto G(\ell_s(t))$ is an integral curve of the vector field
\[ V(x,y,z) = \frac{1}{\sqrt{4(x^2+y^2)+(x^2+y^2)^4}} \left(-8xz-2y(x^2+y^2),-8yz+2x(x^2+y^2),(x^2+y^2)^2\right) \]
which lies in the intersection of $T_p \S_K$ and $H_p$ at any $p = (x,y,z) \in \S_K$.

Define now $\Psi : (\S_K,d_{\bH}) \to \R^4$ by
\[ \Psi(G(s(1-|t|),t) = (t, (1-|t|)\phi(s))\]
where $\phi$ is the bi-Lipschitz embedding of $(\S^1,|\cdot|^{1/2})$ into $\R^3$. 
By \textsection\ref{sec:z=0} and Theorem \ref{thm:revolution}, we know that $\Psi$ is bi-Lipschitz on a neighborhood of $\mathcal{N}$ and on a neighborhood of $\mathcal{S}$. Moreover, for all $w \in \S_K \setminus\{\mathcal{N}^*,\mathcal{S}^*\}$, 
we can use the result of Lemma~\ref{lem:surfaces} to see that 
there exists a neighborhood of $w$ on $\S_K$ on which $\Psi$ is bi-Lipschitz
with a universal constant. 
By the compactness of $\S_K$, there exist $\e>0$ and $L>1$ such that $\Psi$ is $L$-bi-Lipschitz on each $B(p,\e)\cap\S_K$ with $p\in\S_K$. 
On the other hand, by Lemma \ref{lem:comparison}, if $w,w' \in \S_K$ with $|w-w'| \geq \e$, then
\[ d_{\bH}(w,w') \simeq_{\e} 1 \simeq_{\e} |\Psi(w)-\Psi(w')|. \qedhere\]
\end{proof}

We may follow very similar arguments to prove the following result.
\begin{prop}
\label{prop:sphereemb}
There exists a bi-Lipschitz embedding of $(\S^2,d_{\bH})$ into $\R^4$.
\end{prop}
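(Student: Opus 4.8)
The goal is to show that the Euclidean unit sphere $\S^2 \subset \R^3$, equipped with the Heisenberg distance $d_{\bH}$, admits a bi-Lipschitz embedding into $\R^4$. The sphere $\S^2$ is a surface of revolution about the $z$-axis generated (in a suitable chart) by a $C^\infty$ profile, and by the computation in the proof of the corollary following Theorem~\ref{thm:revolution}, applied to the upper and lower hemispheres as graphs $z = \pm\sqrt{1-x^2-y^2}$ over the disc $\{x^2+y^2<1\}$, one checks that $\S^2$ has exactly two characteristic points, namely the north and south poles $\mathcal N = (0,0,1)$ and $\mathcal S = (0,0,-1)$. (At these points $F'(0) = 0$ holds for the profile, so the revolution analysis applies.) Everywhere else $\S^2$ is $\bH$-regular.

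First I would construct the foliation. Exactly as in Lemma~\ref{lem:BLrev}, consider the vector field $V$ on $\S^2\setminus\{\mathcal N,\mathcal S\}$ spanning $T_p\S^2 \cap H_p$ at each $p$; by rotation invariance of both the tangent planes of $\S^2$ and the horizontal distribution, its integral curves are obtained from a single one by rotation about the $z$-axis. These integral curves are the ``longitudes'': each runs from $\mathcal S$ to $\mathcal N$, they foliate $\S^2$, and — crucially — each is a horizontal curve. One then builds a bi-Lipschitz homeomorphism $G:\mathcal C \to \S^2$, where $\mathcal C$ is the model double cone $\{\ell_s(t) = (s(1-|t|),t) : s\in\S^1,\ t\in[-1,1]\}$, sending longitudes of $\mathcal C$ to longitudes of $\S^2$ and the poles of $\mathcal C$ to $\mathcal N,\mathcal S$; the bi-Lipschitz estimates for $G$ are the same growth-along-integral-curves argument as in Lemma~\ref{lem:BLrev}, using that the speed of $V$ is bounded above and below. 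Then define $\Psi:(\S^2,d_{\bH})\to\R^4$ by $\Psi(G(s(1-|t|),t)) = (t,(1-|t|)\phi(s))$ with $\phi$ the snowflake embedding of $(\S^1,|\cdot|^{1/2})$ from \eqref{snowflake2}.

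For the bi-Lipschitz bounds on $\Psi$, I would split into local regions. Near $\mathcal N$ (and symmetrically near $\mathcal S$), the hemisphere is a surface of revolution with profile satisfying the hypotheses of Theorem~\ref{thm:revolution}, and the map $\Psi$ there is precisely the map produced in that theorem (the parameter $t$ near $1$ plays the role of the radial coordinate and $(1-|t|)\phi(s)$ the cone coordinate), so $\Psi$ is bi-Lipschitz on a neighborhood of $\mathcal N$ by Theorem~\ref{thm:revolution}; similarly near $\mathcal S$. Away from the poles, every point of $\S^2$ is $\bH$-regular and sits on the horizontal longitude foliation with $C^1$ dependence on the transverse parameter, so Lemma~\ref{lem:surfaces} applies (after checking that $\Psi$ agrees, up to the fixed $C^1$ reparameterization $s\mapsto\phi(s)$ composed with an affine change, with the map $\Psi$ of that lemma) and gives a neighborhood of each such point on which $\Psi$ is $L$-bi-Lipschitz with a universal $L$. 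Finally, by compactness of $\S^2$ there is $\e>0$ so that $\Psi$ is $L$-bi-Lipschitz on every set of Euclidean diameter at most $\e$; for pairs $w,w'$ with $|w-w'|\geq\e$, Lemma~\ref{lem:comparison} gives $d_{\bH}(w,w')\simeq_\e 1 \simeq_\e |\Psi(w)-\Psi(w')|$, and a standard chaining/patching argument then upgrades the local estimates to a global bi-Lipschitz bound.

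The main obstacle I expect is the bookkeeping at and near the two characteristic points: one must verify that the surface-of-revolution coordinates really do match the $(t,s)$-coordinates coming from the cone model $\mathcal C$, and that the profile of $\S^2$ at the poles genuinely satisfies $F'(0)=0$ with $\limsup_{t\downarrow 0}|F'(t)|/t<\infty$ so that Theorem~\ref{thm:revolution} is applicable with a quantitative constant. Everything else is a faithful repetition of the arguments already carried out for $\S_K$ in Proposition~\ref{prop:ksphereemb} and for $\mathbb T_{r,R}$, so — as in the treatment of the Kor\'anyi sphere — I would only sketch these steps and leave the routine verifications to the reader.
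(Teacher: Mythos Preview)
Your proposal is correct and follows essentially the same approach as the paper's own treatment: the paper also reduces to the argument of Proposition~\ref{prop:ksphereemb}, replacing the Kor\'anyi sphere's vector field by the unit field tangent to $T_p\S^2 \cap H_p$, explicitly
\[
V(x,y,z) = \frac{1}{\sqrt{5(x^2+y^2)}}\,(xz+2y,\ yz-2x,\ -(x^2+y^2)),
\]
and then invokes Theorem~\ref{thm:revolution} near the poles, Lemma~\ref{lem:surfaces} at regular points, and the compactness/Lemma~\ref{lem:comparison} patching exactly as you outline.
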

As with Proposition \ref{prop:ksphereemb}, one needs to construct a bi-Lipschitz mapping $G:\S^2 \to \S^2$ such that every longitude $\ell_s([-1,1])$ is mapped to a horizontal curve. Here, the vector field $V$ is defined as 
\[ 
V(x,y,z) = \frac1{\sqrt{5(x^2+y^2)}}(xz + 2y, yz-2x, -y^2-x^2).
\]
The rest of the proof follows in the same way as above.

\subsection{The surface $z=\frac12xy$}\label{sec:1/2xy}
The manifolds considered in \textsection\ref{sec:revolution} and in \textsection\ref{sec:spheres} are manifolds that contain at most two characteristic points. Now we consider the surface $z = xy/2$ which has infinitely many characteristic points,
namely all points along the $x$-axis. 
Solving the ODE for $dx/dy$ in \eqref{eq:ODE1}, we obtain the horizontal curves
\[ 
\left\{\left(c,t,\tfrac{1}{2}ct \right) \, : \, t \in \mathbb{R} \right\} \text{ for } c \in \mathbb{R} 
\quad \text{and} \quad 
\{(t,0,0) \, : \, t \in \mathbb{R}\}.
\]
%The situation is completely different than the two above. 
%Here, all points are regular except for the line $\{(t,0,0)\}$ which intersects all horizontal curves. 
On the other hand, for any $a\in\R$ and $c\in\R$, the curve
\[ \{(t,c,ct/2) : t\in [a,a+c]\} \]
is a snowflake; that is, it is bi-Lipschitz homeomorphic to a rescaled copy of $\Phi([0,1])$ where $\Phi : \R \to \R^3$ is the $\frac12$-snowflaking map from \textsection\ref{sec:prelim}. The construction of the following embedding resembles a general construction by Seo \cite{Seo}.

\begin{prop}\label{prop:z=1/2xy}
The metric space $(\{z=\frac12xy\}, d_{\mathbb{H}})$ bi-Lipschitz embeds in $\R^{19}$.
\end{prop}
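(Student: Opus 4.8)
The plan is to first convert the problem into an explicit statement about a distance function on $\R^2$, then to decompose $\R^2$ into scale‑adapted cells on which this distance is tame, and finally to reassemble everything with Lemma~\ref{lem:welding}, following the idea behind Seo's construction \cite{Seo}.

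\emph{Reduction.} I would parametrize $M=\{z=\tfrac12xy\}$ by its first two coordinates, identifying $(x,y,\tfrac12xy)$ with $(x,y)$. A one‑line computation with the definition of $d_{\bH}$, using $\tfrac12x_1y_1-\tfrac12x_2y_2+\tfrac12(x_1y_2-x_2y_1)=\tfrac12(x_1-x_2)(y_1+y_2)$, shows that under this identification
\[ d_{\bH}\big((x_1,y_1),(x_2,y_2)\big)=|x_1-x_2|+|y_1-y_2|+\tfrac{1}{\sqrt2}|x_1-x_2|^{1/2}|y_1+y_2|^{1/2}=:\rho\big((x_1,y_1),(x_2,y_2)\big),\]
so it suffices to bi‑Lipschitz embed $(\R^2,\rho)$ into $\R^{19}$. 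I would then record the relevant symmetries: $(x,y)\mapsto(\pm x,\pm y)$ and $(x,y)\mapsto(x+a,y)$ are $\rho$‑isometries, and $(x,y)\mapsto(rx,ry)$ scales $\rho$ by $r$; moreover the horizontal curves of $M$ are the lines $\{x=c\}$, each $\rho$‑isometric to $\R$, and the characteristic locus is the $x$‑axis $\{y=0\}$, also $\rho$‑isometric to $\R$.

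\emph{Scale‑adapted cells.} Next I would decompose $\R^2$ into the $x$‑axis together with the dyadic blocks $\{2^k\le \pm y<2^{k+1},\ m2^k\le x<(m+1)2^k\}$, $k,m\in\Z$. By the symmetries above, every such block is, up to a $\rho$‑isometry and a dilation, a copy of one fixed bounded model block; on that model $\rho$ is bi‑Lipschitz equivalent, with a universal constant, to the product metric of $[1,2]$ with the snowflake $([0,1],|\cdot|^{1/2})$. Using the embedding $\Phi$ of \textsection\ref{sec:prelim} on the snowflake factor, each block (hence each dilated/reflected copy) admits a bi‑Lipschitz embedding into a fixed‑dimensional Euclidean space with a universal constant and with image of diameter comparable to that of the block; for blocks off the $x$‑axis one may alternatively invoke Proposition~\ref{thm:reg}, as those blocks consist of $\bH$‑regular points.

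\emph{Organizing the scales and welding.} Following \cite{Seo}, I would then partition the blocks into a bounded number of \emph{families}, according to the sign of $y$ and the residues of $k$ and $m$ modulo a fixed integer. The point of this is that within one family any two distinct blocks $P,P'$ are uniformly separated relative to their sizes, $\dist_\rho(P,P')\gtrsim \diam_\rho(P)+\diam_\rho(P')$ (blocks at a common dyadic $y$‑scale are then far apart in $x$, and blocks at different $y$‑scales have well‑separated scales); consequently the snowflaking term in $\rho$ no longer interferes across blocks of a family, so the ``skeleton'' of the family, one representative point per block, is $\rho$‑bi‑Lipschitz to a Euclidean net in $\R^2$. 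Each family can then be embedded into a fixed‑dimensional Euclidean space by sending a point to the pair formed by a centered local embedding of its block and the skeleton coordinate of its block: the block coordinate reproduces $\rho$ inside each block and, thanks to the diameter control and the uniform separation, is harmless from above across blocks, while the skeleton coordinate reproduces $\rho$ from below across blocks. Finally I would weld the $x$‑axis and the finitely many families together by repeated use of Lemma~\ref{lem:welding}; bookkeeping the dimensions and the number of pieces produces the target $\R^{19}$. I expect the core difficulty to be exactly this last stage: unlike the locally embeddable cases treated earlier, here infinitely many scales accumulate at both $0$ and $\infty$ and the snowflake structures at different scales genuinely cannot be superimposed in a fixed‑dimensional space, so one must carefully balance the number of families against the dimension each family costs (and against the extra dimension incurred by every weld) in order to land precisely in $\R^{19}$; a secondary technical point is gluing in the characteristic locus $\{y=0\}$, which is the degenerate $k\to-\infty$ limit of the decomposition, without degrading the bi‑Lipschitz constants.
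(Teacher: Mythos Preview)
Your approach is essentially the paper's: the same parametrization and formula for $\rho$, the same dyadic blocks $\{2^{-n}\le |y|\le 2^{1-n},\ m2^{-n}\le x\le (m+1)2^{-n}\}$, the same model embedding $(x,y)\mapsto(y,\Phi(x))$ on the unit block, and the same Seo-style partition into families followed by Lemma~\ref{lem:welding}. The only point you leave open is the bookkeeping that lands in $\R^{19}$, and here the paper's choices differ from yours in two ways worth noting. First, the paper uses exactly \emph{four} families, indexed only by the parities of $n$ and $m$; it does \emph{not} separate by the sign of $y$, because the single map $G_i|_{Q_{n,m}^{\pm}}=\sigma_{n,m}^{\pm}\circ g\circ(\zeta_{n,m}^{\pm})^{-1}$ already sends the $+$ and $-$ blocks to disjoint regions of $\R^4$ (first coordinate in $[\pm2^{-n},\pm2^{1-n}]$), and the cross-block estimate $|G_i(w)-G_i(w')|\simeq|x-x'|+|y-y'|\simeq\rho(w,w')$ goes through for both signs simultaneously. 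Second, the $x$-axis is not welded on as an extra piece; it is absorbed into each $G_i$ by setting $G_i(x,0,0)=(0,x,0,0)$, which is the natural limit of the block maps as $n\to\infty$. With these two tricks each of the four families (axis included) embeds in $\R^4$, and three applications of Lemma~\ref{lem:welding} give $4+5+5+5=19$. Your ``local embedding paired with a skeleton coordinate'' would work but costs extra dimensions; the paper's single coherent similarity $\sigma_{n,m}^{\pm}$ plays both roles at once inside $\R^4$.
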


\begin{proof}
 For each $n,m\in\Z$ let 
\begin{align*}
Q_{n,m}^{\pm} &= \{(x,\pm y,\pm\tfrac12xy) :  m2^{-n} \leq x \leq (m+1)2^{-n} , 2^{-n} \leq y \leq 2^{1-n}\}
%Q_{n,m}^- &= \{(x,-y,-\tfrac12xy) : m2^{-n} \leq x \leq (m+1)2^{-n} ,  2^{-n} \leq y \leq 2^{1-n}\}.
\end{align*}

Firstly, we show that $(Q_{0,0}^+,d_{\mathbb{H}})$ bi-Lipschitz embeds in $[1,2] \times [0,1]^3 \subset \mathbb{R}^4$. Indeed, consider the map
\[ g: (Q_{0,0}^+,d_{\mathbb{H}}) \to \R^4 \quad\text{ with }\quad g(x,y,\tfrac12xy) = (y,\Phi(x)).\]
Then, for $w=(x,y,\frac12xy) \in  Q_{0,0}^+$ and $w' = (x',y',\frac12x'y') \in Q_{0,0}^+$, we have
\begin{align*}
d_{\mathbb{H}}(w,w') &= |x-x'| + |y-y'| + 2^{-1/2}(y+y')^{1/2} |x-x'|^{1/2} \\
%&\simeq |x-x'| + |y-y'| + |x-x'|^{1/2} \\
&\simeq |y-y'| + |x-x'|^{1/2}\\
&\simeq |g(w)-g(w')|.
\end{align*}

Secondly, we remark that the maps $\zeta_{n,m}^{\pm} : (Q_{0,0}^+,d_{\mathbb{H}}) \to (Q_{n,m}^{\pm},d_{\mathbb{H}})$ given by
\[ \zeta_{n,m}^{\pm}(x,y,\tfrac12xy) = (2^{-n}(x+m), \pm2^{-n}y, \pm\tfrac{1}{2}4^{-n}(x+m)y)\]
are $2^{-n}$-similarities
i.e. $d_{\mathbb{H}}(w,w') = 2^{-n} d_{\mathbb{H}}(\zeta_{n,m}^{\pm}(w),\zeta_{n,m}^{\pm}(w'))$. 
%A $2^{-n}$-similarity map $\zeta_{n,m}^{-} :  (Q_{0,0}^+,d_{\mathbb{H}}) \to (Q_{n,m}^-,d_{\mathbb{H}})$ can be defined analogously. 
Let also $\sigma_{n,m}^{\pm} : \R^4 \to \R^4$ be the similarities given by
\[ \sigma_{n,m}^{\pm}(x,y,z,t) = (\pm 2^{-n}x, 2^{-n} (y+m), 2^{-n}z, 2^{-n}t).\]

Thirdly, we decompose $(\{z=\frac12xy\}, d_{\mathbb{H}})$ into 4 pieces, 
we embed each piece bi-Lipschitz into $\R^4$, 
and we glue these 4 embeddings via Lemma~\ref{lem:welding}. %Set $\mathscr{Q}= \{Q_{n,m} : n,m\in\Z\}$. 
%We claim that there exist families $\mathscr{Q}_1, \dots, \mathscr{Q}_N$ with the following properties
%\begin{enumerate}
%\item $\mathscr{Q} = \bigcup_{i=1}^N \mathscr{Q}_i$;
%\item for all $i\in\{1,\dots,N\}$ and all $Q,Q' \in \mathscr{Q}_i$ we have
%\[ \dist(Q,Q') \gtrsim \max\{\diam{}\}\]
%\end{enumerate}
Define the following four families of squares
\begin{align*}
&\mathscr{Q}_{1} = \{ Q^{\pm}_{n,m} : n\text{ is even and }m\text{ is even}\} \quad &&\mathscr{Q}_{2} = \{ Q^{\pm}_{n,m} : n\text{ is even and }m\text{ is odd}\}\\
&\mathscr{Q}_{3} = \{ Q^{\pm}_{n,m} : n\text{ is odd and }m\text{ is even}\} \quad &&\mathscr{Q}_{4} = \{ Q^{\pm}_{n,m} : n\text{ is odd and }m\text{ is odd}\}.
 \end{align*}
%We construct bi-Lipschitz embeddings $G_i : (\bigcup\mathscr{Q}_{i},d_{\mathbb{H}}) \to \R^4$. 
Fix an $i\in\{1,2,3,4\}$ and define $G_i : (\{y=z=0\}\cup\bigcup\mathscr{Q}_{i},d_{\mathbb{H}}) \to \R^4$ by 
\[ G_i|_{Q_{n,m}^{\pm}} = \sigma^{\pm}_{n,m} \circ g \circ (\zeta_{n,m}^{\pm})^{-1}|_{Q_{n,m}^{\pm}} \quad\text{and}\quad G_i(x,0,0) = (0,x,0,0).\]
By the first two steps, for each $Q \in \mathscr{Q}_i$, the map $G_i|_Q$ is bi-Lipschitz with bi-Lipschitz constant independent of the size of $Q$. It remains to show that each $G_i$ is bi-Lipschitz. Assuming this is true, $(\{z=\frac12xy\} , d_{\mathbb{H}})$ bi-Lipschitz embeds into $\R^{19}$ by Lemma \ref{lem:welding}.

Fix $Q, Q' \in \mathscr{Q}_i$ with $Q\neq Q'$ and fix $w=(x,y,\frac12xy)\in Q$ and $w' = (x',y',\frac12x'y') \in Q'$. (The case that one of them is on the $x$-axis follows by a limit argument). Recall that 
\[ d_{\mathbb{H}}(w,w') \simeq |x-x'| + |y-y'| + |y+y'|^{1/2}|x-x'|^{1/2}.\]
We claim that $d_{\mathbb{H}}(w,w') \simeq |x-x'| + |y-y'|$. To see that, consider the following two cases.

\emph{Case 1.} Suppose that $Q= Q_{n,m}^+$ and $Q' = Q_{n,m'}^+$ for $m \neq m'$. Then, 
\[ %|y-y'| \leq 
|y+y'| \lesssim 2^{-n} \leq |m-m'| 2^{-n} \lesssim |x-x'|.\]
Hence, $|y+y'|^{1/2}|x-x'|^{1/2} \lesssim |x-x'|$. The same conclusion holds if $Q= Q_{n,m}^-$ and $Q'= Q_{n,m'}^-$.

\emph{Case 2.} Suppose that Case 1 does not hold. Then $Q= Q_{n,m}^\pm$ and $Q' = Q_{n',m'}^\pm$ for $n \neq n'$, and so
$|y-y'| \simeq |y+y'|$. The claim follows.

With similar reasoning, we can show that, for any $Q,Q' \in \mathscr{Q}_i$ with $Q\neq Q'$ and for any $w=(x,y,\frac12xy)\in Q$ and $w' = (x',y',\frac12x'y') \in Q'$, we have
\begin{equation}
\label{Gi} 
|G_i(w) - G_i(w')| \simeq |x-x'| + |y-y'|.
\end{equation}
Indeed, notice that $(g \circ (\zeta_{n,m}^{\pm})^{-1})(Q) \subset [1,2]\times [0,1]^3$
for any $Q = Q_{n,m}^\pm \in \mathscr{Q}_i$,
and thus 
$$
G_i(Q) \subset [\pm 2^{-n}, \pm 2^{1-n}] \times [m2^{-n},(m+1)2^{-n}] \times [0,2^{-n}] \times [0,2^{-n}].
$$
According to the definition of $\mathscr{Q}_i$ and arguments as above, then, \eqref{Gi} holds
for any $Q,Q' \in \mathscr{Q}_i$ with $Q\neq Q'$.
That concludes the proof of Proposition \ref{prop:z=1/2xy}.
\end{proof}

\section{Nonembedabbility of Heisenberg porous sets}\label{sec:porous}

The focus of this section is the proof of Theorem \ref{th:bad-porous}. For the rest of \textsection\ref{sec:porous}, we set $\pi : \H \to \R^2$ to be the 1-Lipschitz abelianization map. Given $\alpha > 0$, we define a $\alpha$-cone in $\H$ as the set
\begin{equation}
\label{conedfn}
  C_\alpha := \{(x,y,z) : |z| \leq \alpha (x^2 + y^2)\}.
\end{equation}
It is easily seen that $C_\alpha$ is dilation and rotation invariant and $C_\alpha \subseteq C_\beta$ when $\alpha \leq \beta$. 
In \textsection\ref{sec:laakso} we define the set $X$
and explain why there is no bi-Lipschitz embedding of $X$ in $\ell^2$, 
and in \textsection\ref{sec:laaksoproof} we show that $X$ is indeed porous.

\subsection{Laakso graphs}\label{sec:laakso}
Let $\{G_n\}_{n=1}^\infty$ denote the sequence of Laakso graphs defined in Section 2.2 of \cite{li} and let $f_n : G_n \to \H$ denote the embedding defined starting from the bottom of p. 1625 (there it was just denoted $f$ by abuse of notation).  
(Note in particular that each $G_n$ consists only of the vertices in the graph and does not include the edges.)
Recall that $f_n$ is specified with a sequence of angles $\theta_j = \left( \sqrt{M + j} \log (M + j) \right)^{-1}$ for some $M$ sufficiently large to be fixed.  Theorem 1.2 of \cite{li} showed that $f_n$ on $G_n$ has distortion $O((\log |G_n|)^{1/4} \sqrt{\log \log |G_n|})$.

\begin{figure}[ht]
\centering
\includegraphics[width=6in]{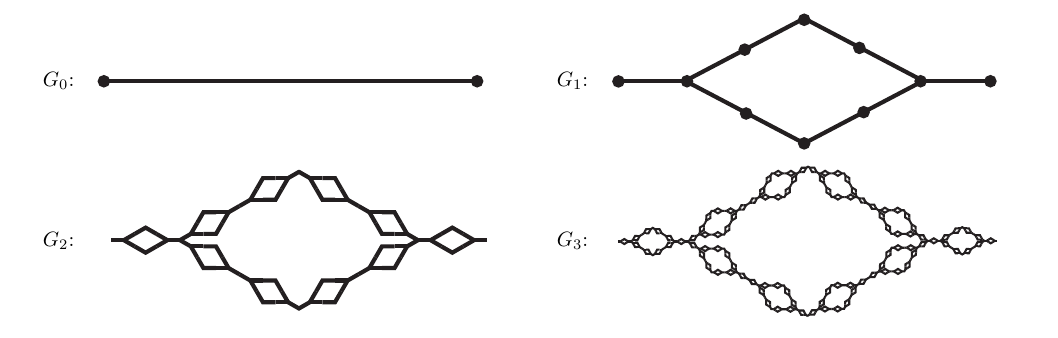}
\caption{The construction of $\{G_n\}_{n=1}^\infty$.}
\end{figure}

One of the main goals of \cite{li} was to use $f_n$ to show that the image $f_n(G_n)$ cannot uniformly bi-Lipschitz embed into Markov $p$-convex metric spaces when $p < 4$.  This is explicitly done in the proof of Corollary 1.4 of \cite{li} (although one needs to remove the intermediary $\H(\Z)$ that appears there).  Since Hilbert space is Markov 2-convex, we get that $f_n(G_n)$ do not embed into $\ell_2$ with uniform bi-Lipschitz distortion.

We now require that the diameter of each $G_n$ is 1.  Thus, the length of edges in $G_n$ become $6^{-n}$.    
This allows us to define a limiting compact object $G_\infty$ also of diameter 1.
It is not hard to show that $G_\infty$ contains each $G_n$ isometrically.

One also has that $G_\infty$ is self-similar and is composed as a union of 10 scaled copies of $G_\infty$.  
Call these level 1 copies.  We use $H_1$ to denote any one of these copies, and they are all isometric.  
(Note that $H_1$ could refer to any level 1 copy of $G_\infty$.)  
Each of these in turn are composed of 10 scaled copies of $G_\infty$.  
Call these level 2 copies, and use $H_2$ to denote each of these copies.  
We can continue and identify level $n$ copies for all $n \in \N$ each denoted $H_n$.  
If a copy $H_i$ does not contain a terminal point of some $H_{i-1}$ we say it is forked.  
Note that each $H_i$ contains 8 forked and 2 non-forked copies of $H_{i+1}$.

Each $H_i$ contains a unique scaled
copy of $G_k$ that is maximal in diameter.  
We call this $H_{i,k}$.  Again, $H_{i,k}$ could refer to any one of many isometric objects.

We can define a Lipschitz map $f : G_\infty \to \H$ so that the restriction of $f$ on each embedded copy of $G_n$ in $G_\infty$ is a re-scaled (and possibly translated and rotated) copy of $f_n(G_n)$.  Thus, $f(G_\infty)$ is compact and cannot embed into $\ell_2$ as it contains copies of $f_n(G_n)$.  
Define $X = f(G_\infty)$.

\subsection{Proof of Theorem \ref{th:bad-porous}}\label{sec:laaksoproof}
From the discussion above, Theorem \ref{th:bad-porous} immediately follows from the following proposition.

\begin{prop} \label{p:porous}
  The set $X = f(G_\infty)$ is porous in $\H$.
\end{prop}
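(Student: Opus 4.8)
The plan is to find, inside every ball $B(x,r)$ centered at a point $x\in X$, a ball of radius $\gtrsim r$ disjoint from $X$, with the vacant ball located in the ``vertical direction'' where the cones \eqref{conedfn} leave room. Large scales are harmless: since $X$ is compact, for $r\gtrsim \diam X$ the annulus $B(x,r)\setminus B(x,\diam X)$ already contains a ball of radius comparable to $r$. So fix $x\in X$ and a small $r>0$. Using the hierarchical structure of $G_\infty$, I would first pick a level $k$ and a level-$k$ copy $H_k$ of $G_\infty$ with $x\in f(H_k)$ and $d_K$-diameter of $f(H_k)$ comparable to $r$ up to universal constants: this uses that $f$ is Lipschitz (so $\diam f(H_k)\lesssim 6^{-k}$) together with the fact that $f$ restricted to the maximal embedded copy $H_{k,1}$ of $G_1$ is, up to a similarity, the fixed map $f_1$ on $G_1$, which is bi-Lipschitz with a universal constant, so $\diam f(H_k)\ge \diam f(H_{k,1})\gtrsim 6^{-k}$. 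Fixing a terminal vertex $v$ of $H_k$, we then have $d_K(x,f(v))\le \diam f(H_k)\lesssim r$.

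The key geometric input is a \emph{cone lemma}: there is a universal $\alpha$, which moreover can be made as small as we wish by taking the parameter $M$ in the definition of $\theta_j=(\sqrt{M+j}\log(M+j))^{-1}$ large, such that for every level-$k$ copy $H_k$ and every terminal vertex $v$ of it,
\[
f(v)^{-1}\cdot f(H_k)\subseteq C_\alpha, \qquad f(H_k)\subseteq B\bigl(f(v),C_0\,6^{-k}\bigr)
\]
for a universal $C_0$. To prove this I would left-translate $f(v)$ to the origin and run along the $G_\infty$-geodesic from $v$ to an arbitrary point $p\in H_k$, of length $s\lesssim 6^{-k}$, which threads a nested sequence of non-forked and forked sub-copies. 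Its $f$-image is a horizontal curve issuing from the origin whose ``spine'' stays in an essentially fixed direction, while each forked sub-copy at relative level $j$ displaces it transversally by $O(\theta_j\cdot(\text{size}))$ and vertically by $O(\theta_j\cdot(\text{size})^2)$; summing the geometrically decaying contributions over the ancestry yields $f(p)=(x_p,y_p,z_p)$ with $x_p^2+y_p^2\gtrsim s^2$ and $|z_p|\lesssim\theta_1 s^2$, hence $|z_p|\le\alpha(x_p^2+y_p^2)$. The rotation invariance of $C_\alpha$ is used here so that the spine direction never has to be tracked, and the same estimate gives the diameter bound.

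To conclude I also need a \emph{bounded overlap} fact about the embedding: there is a universal $N$ such that, for any $p\in\H$ and any $k$, at most $N$ level-$k$ copies $H_k'$ satisfy $f(H_k')\cap B(p,6^{-k})\neq\emptyset$; this reflects that the near-optimal embedding of \cite{li} folds only mildly at each scale. Granting this, $X\cap B(f(v),6^{-k})$ is covered by at most $N$ translates $u_i\cdot C_\alpha$ with $d_K(u_i,f(v))\lesssim 6^{-k}$, so in coordinates centered at $f(v)$ the vertical line $\{(0,0,t)\}$ meets each $u_i\cdot C_\alpha$ in a $t$-interval of length $\lesssim\alpha\,6^{-2k}$. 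Taking $M$ large enough that $N\alpha$ is small, these $N$ intervals cannot cover the segment $|t|\le c_1 6^{-2k}$, so there is $t^*$ with $|t^*|\simeq 6^{-2k}$ and a universal $c'>0$ with $B\bigl(f(v)\cdot(0,0,t^*),\,c'6^{-k}\bigr)$ disjoint from $\bigcup_i u_i\cdot C_\alpha$, hence from $X\cap B(f(v),6^{-k})$; since this ball lies inside $B(f(v),6^{-k})$ it is disjoint from all of $X$, and since $d_K(x,f(v))\lesssim r$ it lies inside $B(x,r)$ once the comparability constants are arranged (using the triangle inequality for $d_K$, and Lemma~\ref{lem:comparison} if one prefers to work with $d_{\bH}$). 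Thus $B(x,r)\setminus X$ contains a ball of radius $\gtrsim r$ with universal implied constant, which is porosity.

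The main obstacle is supplying the two quantitative inputs — the cone lemma and the bounded overlap estimate — both of which require unwinding the recursive definition of $f$ from \cite{li} and tracking the turning angles $\theta_j$ carefully. The bounded overlap bound is the more delicate of the two, since it is precisely what forces the porosity constant to be independent of the scale; one has to check that the relevant combinatorial constant $N$ does not grow with $k$ and is compatible with the choice of $M$ used to shrink $\alpha$ in the cone lemma.
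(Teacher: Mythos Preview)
Your approach is genuinely different from the paper's, and the gap you yourself flag---the bounded overlap estimate---is real and, as far as I can see, not resolvable along the lines you suggest.

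The paper does \emph{not} look for the vacant ball in the vertical direction. Instead it places the ball at
\[
x'=f(s_k)\cdot(u\,6^{-k+10},v\,6^{-k+10},0),
\]
i.e.\ \emph{horizontally} offset from the terminal $f(s_k)$, in the direction perpendicular (in $\R^2$) to the spine of $Y_k$. The point is then to rule out intersection with $f(Y_i)\setminus f(Y_{i+1})$ for every $i$, and the engine is a \emph{signed-area} argument: if $B$ were to meet a parallel branch $Y'_{i+1}$, one follows the $G_\infty$-geodesic from $s_k$ to the corresponding terminal $s'_k$, whose $f$-image is a piecewise horizontal path; Proposition~\ref{p:flat} forces the turning along both sides to be tiny, so the closed planar path $\pi(p_1)\cdots\pi(p_n)\pi(p_1)$ encloses an isoceles triangle of aperture $\gtrsim\theta_i$ and hence area $\gtrsim\frac{1}{\theta_i}|\pi(p_1)-\pi(p_n)|^2$. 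This places $f(s'_k)$ \emph{outside} the cone $f(s_k)\cdot C_{1/16\theta_1}$, while Lemma~\ref{l:cone} puts the (horizontally offset) ball inside $f(s_k)\cdot C_1\subset f(s_k)\cdot C_{1/16\theta_1}$. Contradiction. No overlap count is needed: each level $i$ is handled by one contradiction, not by a covering.

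Your route instead needs a uniform bound $N$ on the number of level-$k$ copies whose images meet a ball of radius $6^{-k}$, \emph{and} you need $N\alpha$ small. But these two requirements pull against each other. The mechanism that lets you shrink $\alpha$ is to enlarge $M$, which shrinks all the $\theta_j$; but shrinking $\theta_j$ is exactly what brings parallel branches closer in $\H$ (the $d_K$-separation of two parallel level-$k$ copies near their midpoints is $\simeq\sqrt{\theta_{k-1}}\,6^{-k}$, and more generally level-$k$ copies in parallel level-$(i+1)$ branches can be $\simeq 6^{-k}$ apart in projection only when $k-i\lesssim\log(1/\theta_i)$, a window that \emph{widens} as $M$ grows). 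So the count $N$ is not obviously bounded independently of $M$; in fact the same near-optimality of the embedding that makes $X$ non-embeddable in $\ell_2$ is precisely what allows many distant pieces of $G_\infty$ to land in one small Heisenberg ball. The paper sidesteps this entirely: by choosing the ball horizontally and invoking the enclosed-area lower bound, it turns the presence of \emph{any} competing branch into a cone-exclusion contradiction, rather than trying to count and then cover such branches.

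Your cone lemma $f(v)^{-1}f(H_k)\subset C_\alpha$ is plausible (and in spirit dual to what the paper proves about points in parallel branches lying \emph{outside} a wide cone), but note that your sketch for $x_p^2+y_p^2\gtrsim s^2$ needs the projected path not to fold back; the total angular drift along a generic geodesic inside $H_k$ is $\sum_{j\ge k}\theta_j$, which diverges, so one must use that the contributions are geometrically localized rather than simply summing angles. This can be fixed, but the bounded-overlap step cannot, at least not without importing the paper's signed-area machinery---at which point the horizontal placement of the ball is both simpler and sharper.
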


For the rest of \textsection\ref{sec:laaksoproof}, given a subset $A \subset G_\infty$, we let $\tilde{A} := \pi(f(A)) \subset \R^2$,
and if $x \in G_\infty$, set $\tilde{x} := \pi(f(x))$.
We require two lemmas from \cite{li}.  The first one follows from Lemma 2.6 of \cite{li}.

\begin{lem} \label{l:terminals}
  For $M$ sufficiently large, if $s,t$ are terminal points of a level $n$ copy $H_n$ of $G_\infty$, 
  then $\frac{6^{-n}}{2} \leq |\pi(f(s)) - \pi(f(t))| \leq d_{\mathbb{H}}(f(s),f(t))$.
\end{lem}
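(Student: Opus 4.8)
The statement splits into an easy upper bound and a substantive lower bound, and I would dispatch the easy half first. Writing $p=f(s)=(x,y,z)$ and $q=f(t)=(x',y',z')$, the defining formula for $d_{\mathbb{H}}$ gives
\[
 d_{\mathbb{H}}(p,q)=|x-x'|+|y-y'|+\big|z-z'+\tfrac12(xy'-x'y)\big|^{1/2}\ \ge\ |x-x'|+|y-y'|\ \ge\ |\pi(p)-\pi(q)|,
\]
so the inequality $|\pi(f(s))-\pi(f(t))|\le d_{\mathbb{H}}(f(s),f(t))$ is nothing more than the ($1$-Lipschitz) defining property of $\pi$. The real content is the lower bound $|\pi(f(s))-\pi(f(t))|\ge 6^{-n}/2$, which I would obtain by pushing Lemma 2.6 of \cite{li} through the self-similar structure of $f$.

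For the lower bound, observe that the two terminal points $s,t$ of a level-$n$ copy $H_n$ of $G_\infty$ are also the two terminal points of the copy of $G_k$ (of diameter $6^{-n}$) contained inside $H_n$, for every $k\in\N$. By the definition of $f$ recalled in \textsection\ref{sec:laakso}, the restriction of $f$ to that copy is, up to a similarity $\sigma$ of $\H$, the Laakso embedding $f_k$; the ratio of $\sigma$ is $6^{-n}$ (measured in the diameter-$1$ normalization of \textsection\ref{sec:laakso}). Thus $f(s)=\sigma(f_k(a_k))$ and $f(t)=\sigma(f_k(b_k))$, where $a_k,b_k$ are the two terminal points of $G_k$. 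Since $\pi$ intertwines the dilations of $\H$ with planar dilations and the rotations of $\H$ with planar rotations, $\pi\circ\sigma$ is a planar similarity of the same ratio, whence
\[
 |\pi(f(s))-\pi(f(t))|\ =\ 6^{-n}\,|\pi(f_k(a_k))-\pi(f_k(b_k))|.
\]
It therefore suffices to prove $|\pi(f_k(a_k))-\pi(f_k(b_k))|\ge\tfrac12\diam(G_k)=\tfrac12$ for the terminal points of $G_k$.

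This last estimate is exactly the content of Lemma 2.6 of \cite{li}, once $M$ is chosen large enough: that lemma controls the abelianized curve $\pi\circ f_k$, showing that the planar image of a copy of $G_k$ stays within a distance of the straight segment joining the images of its two endpoints that is controlled by $M$ and shrinks as $M\to\infty$, so that for $M$ large the two endpoints lie at planar distance at least half the diameter of the copy. The same statement, with the same proof, applies to the terminal points of any sub-copy of a $G_j$ inside a $G_k$ — which is the generality we actually invoke, since $s,t$ are the terminal points of a sub-copy at scale $6^{-n}$. Combining this with the displayed identity and the first paragraph completes the proof and fixes $M$ as the value for which Lemma 2.6 delivers the constant $\tfrac12$.

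I do not anticipate a genuine obstacle: the argument is a transcription of Lemma 2.6 of \cite{li} across the dilation semigroup of $\H$, together with the trivial $1$-Lipschitz bound. The step that must be carried out carefully is the bookkeeping of normalizations — \cite{li} constructs $f_k$ in its own scale, whereas in \textsection\ref{sec:laakso} every $G_k$ has been renormalized to diameter $1$ and the copy relevant to $H_n$ carries the extra factor $6^{-n}$ — so one has to verify that the composite similarity $\sigma$ really has ratio $6^{-n}$ and that, after unwinding all rescalings, Lemma 2.6 leaves enough slack (via the choice of $M$) for the final constant to come out as exactly $6^{-n}/2$ rather than merely a universal multiple of it.
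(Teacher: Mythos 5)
Your proposal is correct and matches the paper's approach: the paper offers no argument beyond the citation "follows from Lemma 2.6 of \cite{li}," and your write-up simply supplies the routine details that citation leaves implicit — the trivial upper bound $|\pi(p)-\pi(q)|\le |x-x'|+|y-y'|\le d_{\mathbb{H}}(p,q)$ and the $6^{-n}$ rescaling bookkeeping needed because the terminal points of $H_n$ are the terminal points of its maximal embedded copy $H_{n,k}$ of $G_k$, to which the (suitably normalized) Lemma 2.6 applies for $M$ large.
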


The second is Lemma 2.7 of \cite{li}.

\begin{lem} \label{l:convex-hull}
 For any $i$, the set $\tilde{H}_i$ is contained in the closed convex hull of the corresponding $\tilde{H}_{i,1}$.
\end{lem}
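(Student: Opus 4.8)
The plan is to follow the iterated construction of $f$ from \textsection\ref{sec:laakso} and compare the fractal planar image $\tilde H_i=\pi(f(H_i))$ with the image of its coarsest approximation $\tilde H_{i,1}$. Recall that the restriction of $f$ to a maximal scaled copy $H_{i,k}$ of $G_k$ inside $H_i$ is, up to rescaling, rotation and translation, a copy of $f_k$, and that $f_k$ is built from $f_{k-1}$ by replacing each edge of $G_{k-1}$ by a scaled copy of the $G_1$-pattern in which the two arcs of the newly inserted diamond are turned by the fixed small angle $\pm\theta_{i+k}$ relative to the incoming direction, while the rest of the edge is carried along straight. The first step is a deviation estimate: writing $\ell$ for the segment in $\R^2$ joining the two terminals of $\tilde H_i$, I would show
\[
\sup_{p\in \tilde H_i}\dist(p,\ell)\ \le\ c\sum_{j\ge 1}\theta_{i+j}\,6^{-j}\,\diam(H_i),
\]
because passing through any diamond contributes zero net turning (the two arcs return to the incoming direction), so the deviation introduced at generation $j$ is of order $\theta_{i+j}$ times the length $6^{-j}\diam(H_i)$ of a generation-$j$ edge, and deviations at different generations are local. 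Since the $\theta_j$ are non-increasing, the right-hand side is a small universal multiple of $\theta_{i+1}\diam(H_i)$, i.e.\ a small multiple of the amplitude of the single level-$1$ bump already present in $\tilde H_{i,1}$.

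The second step is to understand the convex polygon $C:=\operatorname{conv}(\tilde H_{i,1})$. Because there is no turning accumulated above level $1$, $\tilde H_{i,1}$ is (up to a similarity) a straight segment joining the two terminals with the single level-$1$ diamond — its two arcs deviating to opposite sides by $\pm\theta_{i+1}$ — inserted in the middle portion of that segment; hence $C$ is an explicit hexagon symmetric about $\ell$, bounded away from $\ell$ on each side by the two chords running from the terminals to the two extreme points of the diamond, which open linearly in the arclength parameter. The fact I would isolate is that, at the parameter position where the diamond of any descendant edge $E'$ of $\tilde H_{i,k}$ is inserted (a middle-portion point of $E'$), the segment $E'$ lies at distance at least $\kappa\,\theta_{i+2}\,6^{-(\mathrm{level}\,E')}\diam(H_i)$ from $\partial C$, with $\kappa$ a universal constant; this is exactly where the geometry of the Laakso pattern matters — diamonds sit in the middle portions of edges, so descendants never crowd the vertices of $C$ — together with the slow, monotone decay of $\theta_j=(\sqrt{M+j}\log(M+j))^{-1}$ and the choice of $M$ large (which keeps all angles small, so the vertices of $C$ are genuinely "open").

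With these two ingredients in hand the conclusion is an induction on $k$: assuming $\tilde H_{i,k}\subset C$ with every edge's middle portion at the stated distance from $\partial C$, the refinement producing $\tilde H_{i,k+1}$ displaces points by at most the generation-$(k{+}1)$ bump amplitude, which by the distance bound keeps $\tilde H_{i,k+1}$ inside $C$, and one checks that the middle-portion distance bound is inherited by the new edges (using again that the new diamonds sit in middle portions and that $\theta_{i+k+1}\le\theta_{i+k}$). Passing to the limit gives $\tilde H_i\subset\overline{C}=C$, which is the claim (and the limit step is where Lemma~\ref{l:terminals}-type control that the construction really converges is used). The main obstacle I anticipate is the second step: ensuring that the constant $\kappa$ governing the distance to $\partial C$ strictly dominates the constant $c$ governing bump amplitudes, uniformly over all descendant edges and all generations. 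This forces a careful, but purely planar and trigonometric, bookkeeping of where successive diamonds are placed relative to the chords bounding $C$; everything else reduces to the self-similarity already set up in \textsection\ref{sec:laakso} and routine estimates.
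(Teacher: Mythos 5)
First, a point of comparison: the paper does not prove this statement at all --- it is imported verbatim as Lemma~2.7 of \cite{li}, so there is no in-paper argument to measure your proposal against. Your reconstruction does identify the only plausible ingredients (localized deviation estimates for the refinements, an explicit description of $\operatorname{conv}(\tilde{H}_{i,1})$, and an induction over the generations $\tilde{H}_{i,k}$), and the zero-net-turning observation and the limit step are fine. The problem is that the step you yourself flag as ``the main obstacle'' is not bookkeeping: it is the entire content of the lemma, and as formulated your inductive hypothesis does not close.

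Concretely, the binding constraint is near the extreme points of $C=\operatorname{conv}(\tilde{H}_{i,1})$ (the terminals of $H_i$), where $C$ is a wedge whose half-width at distance $\rho$ from the terminal is of order $\theta_{i+1}\,\rho$ (the aperture is set by the \emph{level-one} angle only). A point of $\tilde{H}_i$ sitting on the level-$k$ diamond of the chain of first edges lies at distance $\rho\simeq 6^{-k}\diam(H_i)$ from the terminal, and by self-similarity its apex subtends angle $\simeq\theta_{i+k}$ there; so its slack to $\partial C$ is only of order $(\theta_{i+1}-\theta_{i+k})\,6^{-k}\diam(H_i)$, whereas the subsequent bumps attached to the edges adjacent to that apex have amplitude of order $\theta_{i+k+1}\,6^{-k}\diam(H_i)$ up to an absolute constant. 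With $\theta_j=(\sqrt{M+j}\log(M+j))^{-1}$ the ratios $\theta_{i+k}/\theta_{i+1}$ are essentially $1$ (and tend to $1$ as $M\to\infty$ for fixed $k$), so $\theta_{i+1}-\theta_{i+k}$ is \emph{much smaller} than $\theta_{i+k+1}$; the comparison ``distance to $\partial C$ dominates bump amplitude'' that your induction requires therefore fails, or is at best borderline in a way that depends on the exact placement and branch angles of the pattern, which you have not pinned down. Your first displayed estimate, $\sum_{j\ge1}\theta_{i+j}6^{-j}\lesssim\theta_{i+1}$, controls the \emph{global} deviation from the chord $\ell$ and is irrelevant to this local issue. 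To make the argument work one must exploit finer structure --- e.g.\ that each apex is a fixed vertex whose adjacent edges immediately turn back toward the axis at angle $\simeq\theta_{i+k}$, and that the next diamonds are confined to middle portions of those edges, so that the descent compensates the new bump in a telescoping fashion --- and verifying that the resulting absolute constants beat each other is precisely the non-trivial computation carried out in \cite{li}. As written, your proposal asserts the conclusion of that computation rather than performing it, so I would count this as a genuine gap.
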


\begin{rem} \label{r:triangle}
 Let $C$ and $C'$ be convex hulls of $\tilde{H}_{i+1,1}$ 
for copies of $H_{i+1}$ in $H_i$ that are in parallel and intersect at a fork point $a$. 
 Lemma \ref{l:convex-hull} says that there exists a cone based at $a$ of aperture at $\theta_i - \frac{\theta_{i+1}}{2} \geq \frac{\theta_i}{2}$ between $C$ and $C'$.
\end{rem}

\begin{cor} \label{l:H-bound}
  For any $H_n$, we have $\frac{6^{-n}}{2} \leq \diam_{d_{\bH}}{f(H_n)} \leq 6^{-n}$.
\end{cor}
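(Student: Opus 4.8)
The plan is to read off both inequalities from the self-similar description of $G_\infty$ together with the two facts already imported from \cite{li}; the argument should be short. For the \emph{lower bound} I would apply Lemma~\ref{l:terminals} directly. A level $n$ copy $H_n$ is an isometric copy of $G_\infty$ scaled by $6^{-n}$, so it has two terminal points $s,t$ at graph-distance $6^{-n}$ (the images of the two terminal vertices of $G_\infty$, which it inherits from the approximating graphs $G_m$ since $G_1\subset G_2\subset\cdots$). Since $\pi$ is $1$-Lipschitz from $(\bH,d_{\bH})$ to $(\R^2,|\cdot|)$, Lemma~\ref{l:terminals} gives
\[
\frac{6^{-n}}{2} \;\le\; |\pi(f(s))-\pi(f(t))| \;\le\; d_{\bH}(f(s),f(t)) \;\le\; \diam_{d_{\bH}} f(H_n).
\]

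For the \emph{upper bound} I would use that the defining map $f\colon G_\infty \to \bH$ is $1$-Lipschitz with respect to $d_{\bH}$: this is the Lipschitz normalization of the maps $f_m$ from Theorem~1.2 of \cite{li}, under which each edge of an embedded copy of $G_m$ is sent to a horizontal curve no longer than that edge, so a graph geodesic of $G_\infty$ maps to a horizontal curve whose endpoints are at $d_{\bH}$-distance at most the graph distance. Since $H_n$ is a $6^{-n}$-scaled isometric copy of the diameter-$1$ space $G_\infty$, we have $\diam_{d_{G_\infty}} H_n = 6^{-n}$, whence
\[
\diam_{d_{\bH}} f(H_n) \;\le\; \diam_{d_{G_\infty}} H_n \;=\; 6^{-n}.
\]

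I do not expect a genuine obstacle: both bounds are essentially quotations of results from \cite{li}. The only point that needs attention is matching the competing normalizations — the edge-length normalization that pins $\diam_{d_{G_\infty}} G_\infty = 1$ against the normalization of the Lipschitz constant of $f$ into $(\bH,d_{\bH})$ (rather than into $(\bH,d_{cc})$ or $(\bH,d_K)$) — so that the constants come out as exactly $\tfrac12$ and $1$ rather than up to an absolute factor. If one is content with $\diam_{d_{\bH}} f(H_n) \simeq 6^{-n}$, which is all the subsequent porosity argument uses, this bookkeeping is unnecessary: the lower bound is Lemma~\ref{l:terminals} verbatim, and for the upper bound it is enough that $f$ is Lipschitz and that $H_n$ is a $6^{-n}$-scaled copy of the bounded space $G_\infty$; alternatively one can bound $\pi(f(H_n))$ via Lemma~\ref{l:convex-hull} (it lies in the convex hull of the diameter-$\lesssim 6^{-n}$ set $\pi(f(H_{n,1}))$) and then control the vertical displacement along the horizontal curves making up $f(H_n)$.
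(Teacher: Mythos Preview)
Your proposal is correct and follows essentially the same approach as the paper: the paper's proof is two sentences, citing $1$-Lipschitzness of $f$ for the upper bound and Lemma~\ref{l:terminals} for the lower bound, which is exactly what you do. Your added discussion of normalizations and the alternative route via Lemma~\ref{l:convex-hull} is unnecessary here, since the paper simply takes the $1$-Lipschitzness of $f$ (and the inequality $|\pi(f(s))-\pi(f(t))|\le d_{\bH}(f(s),f(t))$ built into Lemma~\ref{l:terminals}) as given.
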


\begin{proof}
  The upper bound comes from 1-Lipschitzness of $f$.  The lower bound comes from Lemma \ref{l:terminals}.
\end{proof}

\begin{prop} \label{p:flat}
  There exists $M>0$ sufficiently large so that the following holds.  Let $H_i \supset H_{i+1} \supset ... \supset H_k$ be a nested sequence of sub-graphs so that $H_{i+1}$ is forked in $H_i$.  Let $S \subseteq \{i+2,...,k\}$ be the indices $j$ so that $H_j$ is forked in $H_{j-1}$.  Suppose there exists another forked copy $H_{i+1}' \subset H_i$ that is in parallel with $H_{i+1}$ so that there are points $x \in H_k$ and $y \in H_{i+1}'$ for which $6^{-k} \leq |\tilde{x} - \tilde{y}| \leq 6^{-k + 1}$.  Then $\sum_{j \in S} \theta_j \leq 1/100$.
\end{prop}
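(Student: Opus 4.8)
The plan is to argue by contradiction, squeezing the quantity $|\tilde x-\tilde a|$ between two bounds, where $a$ denotes the fork point of $H_i$ at which $H_{i+1}$ and $H_{i+1}'$ meet (as in Remark~\ref{r:triangle}); recall that $a$ is then a terminal point of $H_{i+1}$. I may assume $S\ne\varnothing$, since otherwise $\sum_{j\in S}\theta_j=0$. The mechanism is that the deeper $H_k$ sits inside $H_{i+1}$, and the more forks it passes through, the farther $\tilde x$ is forced away from $\tilde a$; but closeness of $x$ to the \emph{parallel} branch $H_{i+1}'$ forces $\tilde x$ to lie near $\tilde a$, because the two branches spread apart there.

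First I would get the upper bound on $|\tilde x-\tilde a|$. By Lemma~\ref{l:convex-hull}, $\tilde x\in\tilde H_{i+1}\subseteq\operatorname{conv}(\tilde H_{i+1,1})$ and $\tilde y\in\tilde H_{i+1}'\subseteq\operatorname{conv}(\tilde H_{i+1,1}')$, and by Remark~\ref{r:triangle} these two convex sets are separated at $\tilde a$ by a cone of aperture at least $\theta_i/2$. Hence the rays from $\tilde a$ through $\tilde x$ and through $\tilde y$ subtend an angle at least $\theta_i/2$, so $|\tilde x-\tilde y|$ is at least the distance from $\tilde x$ to the ray $\tilde a\tilde y$, which is $\geq\sin(\theta_i/2)\,|\tilde x-\tilde a|\gtrsim\theta_i\,|\tilde x-\tilde a|$. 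With the hypothesis $|\tilde x-\tilde y|\le 6^{-k+1}$ this yields $|\tilde x-\tilde a|\lesssim\theta_i^{-1}6^{-k}$.

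Next I would get the lower bound. Put $K_{i+1}:=H_{i+1}$ and inductively let $K_m$ be the unique child of $K_{m-1}$ containing the terminal point $a$; each $K_m$ is non-forked, still has $a$ as a terminal, and has diameter $6^{-m}$. Let $m_0$ be the least index in $\{i+2,\dots,k\}$ with $H_{m_0}\ne K_{m_0}$; if there is none, then $H_{i+2},\dots,H_k$ are all non-forked, so $S=\varnothing$, a contradiction. Thus $H_j$ is non-forked for $i+2\le j<m_0$, so $S\subseteq\{m_0,\dots,k\}$, and $H_{m_0}$ is a child of $K_{m_0-1}$ other than the one containing $a$, so $a\notin H_{m_0}$. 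I would then unwind the self-similar ``diamond'' structure: no matter which child $H_{m_0}$ is (one of the eight forked children, or the non-forked child at the far end), its terminal $p$ nearest $a$ lies past at least one spine vertex of the $G_1$-skeleton of $K_{m_0-1}$, and the stretch of that skeleton from $a$ to $p$ is a concatenation of level-$m_0$ copies joined at terminals that turns by only $O(\theta_1)$ in total, so Lemma~\ref{l:terminals} gives $|\tilde p-\tilde a|\ge\tfrac12\,6^{-m_0}$. Since, by Lemma~\ref{l:convex-hull} again, $\tilde H_{m_0}$ sits in the narrow ``forward'' cone at $\tilde p$, once $M$ is large the angle at $\tilde p$ between $\tilde a$ and any point of $\tilde H_{m_0}$ exceeds $\pi/2$, and a law-of-cosines estimate then gives $|\tilde x-\tilde a|\ge|\tilde p-\tilde a|\ge\tfrac12\,6^{-m_0}$ for $x\in H_k\subseteq H_{m_0}$.

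Finally, comparing the two bounds gives $6^{-m_0}\lesssim\theta_i^{-1}6^{-k}$, hence $k-m_0\lesssim\log(1/\theta_i)\asymp\log(M+i)$; since $S\subseteq\{m_0,\dots,k\}$ and $(\theta_j)_j$ is decreasing, $\sum_{j\in S}\theta_j\le(k-m_0+1)\,\theta_{m_0}\le(k-m_0+1)\,\theta_i\lesssim\log(M+i)\bigl(\sqrt{M+i}\,\log(M+i)\bigr)^{-1}=(M+i)^{-1/2}$, which is at most $1/100$ once $M$ is large, uniformly in $i$. I expect the lower bound to be the main obstacle: it needs a careful case analysis of which child of $K_{m_0-1}$ the copy $H_{m_0}$ can be and a quantitative control of how much the embedding turns along the relevant piece of the $G_1$-skeleton so that the obtuse-angle comparison is legitimate, and it is precisely there that choosing $M$ large enters; the upper bound is essentially immediate from Remark~\ref{r:triangle}.
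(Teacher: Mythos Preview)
Your proof is correct and follows essentially the same approach as the paper: both argue by contradiction, obtaining an upper bound on the distance from $\tilde x$ to the fork point via the cone separation of Remark~\ref{r:triangle} and a lower bound of order $6^{-\ell}$ from Lemma~\ref{l:terminals} (the paper takes $\ell=\min S$ where you take $m_0$, with $m_0\le\ell$), then compare to get $|S|=O(\log(1/\theta_i))$ and sum. Your geometric justification of the lower bound is more explicit than the paper's terse citation of Lemma~\ref{l:terminals}, and your final estimate $(k-m_0+1)\theta_i\lesssim(M+i)^{-1/2}$ is slightly coarser than the paper's but equally sufficient.
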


\begin{proof}
  We suppose for contradiction that $\sum_{j \in S} \theta_j > \frac{1}{100}$.  In particular, $S$ is nonempty so let $\ell = \min S$ and $a,b,c \in \R^2$ be the points of $\tilde{H}_{i,1} \subset \tilde{H}_i$ with degree at least 3.  
  Then the distance from $\tilde{x}$ to $a,b,c$ is at least $6^{-\ell}/2$ by Lemma \ref{l:terminals}.  
  By Remark \ref{r:triangle}, the distance from $x$ to any point in $\tilde{H}_{i+1}'$ (which is in parallel with $\tilde{H}_{i+1}$)
  is at least $6^{-\ell}\theta_i/8$ and so $6^{-k+1} \geq 6^{-\ell}\theta_i/8$ by the assumption of the lemma.  
  This tells us that $k \leq \ell + 3 + \log \frac{1}{\theta_i}$ and so $|S| \leq 3 + \log \frac{1}{\theta_i} \leq \frac{1}{100}\sqrt{i + M}$
  for large enough $M$.  Thus,
  \begin{align*}
    \sum_{j \in S} \theta_j \leq \sum_{j=1}^{\sqrt{i+M}/100} \theta_{i+1+j} \leq \sum_{j=1}^{\sqrt{i+M}/100}\frac1{\log(M+i+j+1)\sqrt{M+i+j+1}} < \frac{1}{100},
  \end{align*}
  which is a contradiction.
\end{proof}

We also choose $M$ sufficiently large so that $100 \theta_1 < \frac{1}{100}$.

Fix some $x \in X$ and choose some $k \in \N$.  Let $y \in G_\infty$ be any point so that $f(y) = x$.  For each $i \in \N$, there exists some $H_i$ that contains $y$.  We call this subset $Y_i$.  If $y$ is a terminal point of some $H_i$, then the choice of $Y_i$ may not be unique. In this case, choose $Y_i$ arbitrarily while ensuring that, at each step, $Y_{i+1} \subset Y_i$.  We let $s_i$ and $t_i$ denote the source and sink of $Y_i$.

Let $(u,v) \in \S^1$ be a $90^\circ$ clockwise rotation of the direction from $\tilde{s}_k$ to $\tilde{t}_k$.  
Let 
\[ x' = f(s_k) \cdot (u6^{-k+10}, v6^{-k+10}, 0)\] 
and define the ball $B = B(x', 6^{-k})$.  
Our goal is to prove that $B \cap X = \emptyset$.  
It suffices to show that $B \cap (f(Y_i) \backslash f(Y_{i+1})) = \emptyset$ for every $i \in \N$.  
%\textcolor{red}{More detail?}
If $k \leq i + 5$, 
then it is obvious that this intersection is empty
by a simple triangle inequality argument and the fact that $f(Y_i) \subseteq B(f(s_k), 6^{-k+5})$.

We may now suppose $i < k - 5$. Let $S \subseteq \{i+2,...,k\}$ be the indices $j$ so that $Y_j$ is forked in $Y_{j-1}$.  Let $P : \R^2 \to \R$ denote the orthogonal projection onto the linear subspace spanned by $\tilde{s}_i$ and $\tilde{t}_i$ with the orientation that $P(\tilde{s}_i) < P(\tilde{t}_i)$.

\begin{lem}
  Suppose at least 100 of $\{Y_j\}_{j=i+2}^k$ are forked. Then
  \begin{align}
    P(\pi(B)) \subseteq [P(\tilde{s}_{i+1}) + 10 \cdot 6^{-k}, P(\tilde{t}_{i+1}) - 10 \cdot 6^{-k}]. \label{e:p-ball}
  \end{align}
\end{lem}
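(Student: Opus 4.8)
The strategy is to control $P(\pi(B))$ from both sides using the self-similar structure of $G_\infty$, exploiting that the hypothesis forces two things at once: $k-i\geq 101$ (since $\{i+2,\dots,k\}$ must contain $\geq 100$ forked indices), so that $\diam f(Y_{i+1})\asymp 6^{-(i+1)}\gg 6^{-k}$; and $\ell:=\min S\leq k-99$, so that the first forking below level $i+1$ occurs at a scale $6^{-\ell}\geq 6^{99}6^{-k}$. First I would record two reductions. Since $\pi$ is $1$-Lipschitz, $\pi(B)\subseteq \bar B_{\mathbb R^2}(\pi(x'),6^{-k})$; since $\pi$ is a homomorphism and $\pi(f(s_k))=\tilde s_k$, we have $\pi(x')=\tilde s_k+6^{-k+10}(u,v)$. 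As $P$ is $1$-Lipschitz and $|(u,v)|=1$, this gives $P(\pi(B))\subseteq[P(\tilde s_k)-6^{11}6^{-k},\,P(\tilde s_k)+6^{11}6^{-k}]$, so \eqref{e:p-ball} follows once we prove
$$P(\tilde s_{i+1})+6^{11}6^{-k}\ \leq\ P(\tilde s_k)\ \leq\ P(\tilde t_{i+1})-6^{11}6^{-k}.$$
Writing $e_j$ for the unit vector from $\tilde s_j$ to $\tilde t_j$ and using Remark~\ref{r:triangle} together with the structure of the Laakso graphs (a non-forked copy is colinear with the source–sink axis of its parent, so $e_j=e_{j-1}$ whenever $Y_j$ is non-forked), the angle $\angle(e_{i+1},e_i)$ is at most $\theta_i$, which is tiny for $M$ large; hence by Lemma~\ref{l:terminals}, $P(\tilde t_{i+1})-P(\tilde s_{i+1})=|\tilde t_{i+1}-\tilde s_{i+1}|\cos\angle(e_{i+1},e_i)\geq \tfrac14 6^{-(i+1)}$. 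We may assume $Y_{i+2}$ contains $\tilde s_{i+1}$; the case of $\tilde t_{i+1}$ is analogous, with the two estimates exchanged.

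\emph{The easy side.} The copies $Y_{i+2},\dots,Y_{\ell-1}$ are non-forked, so $Y_k\subseteq Y_{i+2}$; by Corollary~\ref{l:H-bound}, $\diam f(Y_{i+2})\leq 6^{-(i+2)}$, hence $\tilde s_k\in\tilde Y_{i+2}\subseteq \bar B(\tilde s_{i+1},6^{-(i+2)})$ and $P(\tilde s_k)\leq P(\tilde s_{i+1})+6^{-(i+2)}$. Since $k\geq i+101$, $6^{11}6^{-k}\leq 6^{-(i+90)}$, and $6^{-(i+2)}<\tfrac14 6^{-(i+1)}-6^{11}6^{-k}\leq P(\tilde t_{i+1})-P(\tilde s_{i+1})-6^{11}6^{-k}$, which gives the upper bound.

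\emph{The hard side.} It remains to prove $P(\tilde s_k)\geq P(\tilde s_{i+1})+6^{11}6^{-k}$, which I expect to be the main obstacle. Here one uses that $Y_\ell$ is forked in $Y_{\ell-1}$, so $\tilde Y_\ell$ is disjoint from the end copies of $Y_{\ell-1}$. Arguing exactly as in the proof of Proposition~\ref{p:flat} — tracking the degree-$\geq 3$ vertices of the first-level structure $\tilde Y_{\ell-1,1}$ and invoking Lemma~\ref{l:terminals} — and distinguishing whether the nested chain keeps $\tilde s_{i+1}$ as a terminal point down to level $\ell-1$ or switches sides at some level $j_0\in\{i+3,\dots,\ell-1\}$, one obtains an absolute constant $c_0>0$ and an index $\ell'\le k-99$ (namely $\ell'=\ell$ or $\ell'=j_0$) with $P(w)\geq P(\tilde s_{i+1})+c_0 6^{-\ell'}$ for every $w\in\tilde Y_\ell$. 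The two delicate points are that this distance must be measured along $e_i$ rather than Euclideanly — the Euclidean distance from $\tilde s_{i+1}$ to $\tilde Y_\ell$ can itself be smaller than $6^{-\ell}$ — which is harmless because the ``overhang'' of $\tilde Y_\ell$ behind its own source and the angular deviation of $e_{\ell-1}$ from $e_i$ are each $O(\theta_1 6^{-\ell'})$ by colinearity of the non-forked copies (so no angular error accumulates below level $i+1$); and the bookkeeping of which terminal the chain follows at each step. Finally $\tilde s_k\in\tilde Y_\ell$ and $c_0 6^{-\ell'}\geq c_0 6^{99}6^{-k}\gg 6^{11}6^{-k}$ once $M$ — hence all $\theta_j$, hence $c_0$ — is fixed, which completes the proof of \eqref{e:p-ball}.

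In short, the one genuinely non-routine input is the lower bound on $P(\tilde s_k)$: everything else is bounding $\pi(B)$ by a metric ball, keeping track of the colinear/flat part of the construction, and observing that the ``$\geq 100$ forked'' hypothesis supplies a factor $6^{99}$ of slack that dwarfs the $6^{11}6^{-k}$ we must beat.
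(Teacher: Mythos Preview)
Your approach rests on the same core as the paper's: set $\ell=\min S$, use that $Y_{i+2},\dots,Y_{\ell-1}$ are non-forked, and exploit $k\ge\ell+99$ to get the $6^{99}$ margin that dwarfs the $6^{10}\cdot 6^{-k}$ coming from $\pi(B)$. But you over-engineer it in two places.

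First, your dichotomy ``$Y_{i+2}$ contains $\tilde s_{i+1}$ or $\tilde t_{i+1}$'' presupposes that $Y_{i+2}$ is non-forked, i.e.\ $\ell>i+2$. When $\ell=i+2$ neither alternative holds, so your case split is incomplete. (This is in fact the easiest case, since then $Y_{i+2}$ is already separated from both terminals of $Y_{i+1}$, but you do not say this.)

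Second, and more to the point, the paper avoids both your ``easy/hard'' asymmetry and the bookkeeping on whether the non-forked chain switches ends. Since non-forked copies lie along the axis of their parent, once $Y_{i+2},\dots,Y_{\ell-1}$ are all non-forked one gets $P(\tilde s_\ell)\ge P(\tilde s_{i+1})$ directly --- regardless of which terminal each $Y_j$ touches --- so there is nothing to track. The paper then uses that $Y_\ell$ is forked, together with the existence of a further forked index in $\{\ell+2,\dots,k\}$, to find a non-forked $H_{\ell+1}$ in $Y_\ell$ separating $Y_k$ from $s_\ell$ and $t_\ell$; this gives $P(\tilde s_k)-P(\tilde s_\ell)\gtrsim 6^{-\ell}$ and, arguing symmetrically for the sink side, both endpoints of \eqref{e:p-ball} at once. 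The whole proof is three sentences, with no case analysis; your ``hard side'' distinction between ``the chain keeps $s_{i+1}$'' and ``it switches at some $j_0$'' is unnecessary once one works with $s_\ell,t_\ell$ rather than trying to relate $s_k$ back to $s_{i+1}$ through the intermediate terminals.
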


\begin{proof}
  Let $\ell \in \{i+2,...,k\}$ be the smallest integer so that $Y_\ell$ is forked.  Thus, in $Y_\ell$, there exists a non-forked $H_{\ell+1}$ separating $Y_k$ from $s_\ell$ and $t_\ell$
  (since at least one $Y_j$ is forked for $j \in \{ \ell + 2, \dots, k \}$).  
  As $\ell$ was minimal in $\{i+2,...,k\}$, $P(\tilde{s}_\ell) \geq P(\tilde{s}_{i+1})$ and we get that $P(\tilde{s}_k) - P(\tilde{s}_\ell) > 6^{-\ell}/2$.  
  As $k \geq \ell + 99$ and since the group multiplication acts in the first two coordinates simply as addition, this proves the lemma
  (arguing similarly for the right hand side of the interval).
\end{proof}

\begin{lem} \label{l:fork}
  If $B$ intersects $f(Y_i) \backslash f(Y_{i+1})$, then $Y_{i+1}$ must be forked and $B$ must intersect some other $f(H_{i+1})$ where $H_{i+1}$ is in parallel with $Y_{i+1}$.
\end{lem}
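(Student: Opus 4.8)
The plan is to argue by cases according to $|S|$, the number of forked copies among $Y_{i+2},\dots,Y_k$. Suppose $B$ meets $f(Y_i)\setminus f(Y_{i+1})$. Since $Y_i$ is a union of ten level-$(i+1)$ copies of $G_\infty$ that pairwise meet only at branch points, there is a level-$(i+1)$ sub-copy $Z\subseteq Y_i$ with $Z\neq Y_{i+1}$ and a point $q\in f(Z)\cap B$; a limiting argument handles the degenerate case in which the only such intersection point is a branch point shared with $Y_{i+1}$. By left-invariance of $d_{\bH}$ we have $d_{\bH}(x',f(s_k))=6^{-k+10}(|u|+|v|)\le\sqrt2\,6^{-k+10}$, so $d_{\bH}(q,f(s_k))<2\cdot 6^{-k+10}$, and since $\pi$ is $1$-Lipschitz, $|\tilde q-\tilde s_k|\le 2\cdot 6^{-k+10}$. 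It suffices to show that $Z$ is the parallel partner of $Y_{i+1}$, because the existence of such a partner already forces $Y_{i+1}$ to be forked. Finally, if $k\le i+9$ then $d_{\bH}(x',f(s_k))\ge 6^{-k+10}\ge 6\cdot 6^{-i}>6\,\diam_{d_{\bH}}f(Y_i)$ by Corollary~\ref{l:H-bound}, whence $B\cap f(Y_i)=\emptyset$ and there is nothing to prove; so we may assume $k\ge i+10$.

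In the first case, $|S|\ge 100$, the bound \eqref{e:p-ball} applies and gives $P(\tilde q)\in\bigl(P(\tilde s_{i+1}),\,P(\tilde t_{i+1})\bigr)$. On the other hand, confining each $\widetilde{H_{i+1}}$ to a thin neighbourhood of its chord via Lemma~\ref{l:convex-hull}, and using Lemma~\ref{l:terminals} to see that these chords have length $\simeq 6^{-(i+1)}$ so that the flatness error is of lower order than the buffer $10\cdot 6^{-k}$ in \eqref{e:p-ball}, we check directly that among the ten level-$(i+1)$ sub-copies of $Y_i$ the only ones whose $\pi\circ f$-image can meet the open strip $\{P(\tilde s_{i+1})<P<P(\tilde t_{i+1})\}$ are $Y_{i+1}$ itself and, when $Y_{i+1}$ lies on one of the two parallel diamond strands, the strand parallel to it; the two end copies and the non-parallel diamond copies project under $P$ entirely into $\{P\le P(\tilde s_{i+1})\}$ or $\{P\ge P(\tilde t_{i+1})\}$. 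Hence $Y_{i+1}$ is forked and $Z$ is its parallel partner.

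In the second case, $|S|<100$, I would first observe that the spine direction turns only when a forked copy is inserted, and then only by an angle $\lesssim\theta_1$; since $100\theta_1<\tfrac1{100}$, the chord direction of $Y_k$ differs from the spine direction of $Y_i$ by less than $\tfrac1{50}$. Thus $(u,v)$ is essentially orthogonal to the axis of the thin slab $\mathrm{conv}(\widetilde{H_{i,1}})\supseteq\widetilde{Y_i}$, and $\pi(x')$ is pushed off this slab by $\simeq 6^{-k+10}$ in the transverse direction relative to $\tilde s_k$. I would then compare $6^{-k+10}$ with the transverse width $\simeq\theta_i 6^{-i}$ of the slab: if the offset dominates, $B$ misses $f(Y_i)$ entirely and there is nothing to prove; if not, we work inside the slab, where $f(s_k)$ lies on the strand $Y_{i+1}$ while $f(Y_i)\setminus f(Y_{i+1})$ consists of the remaining strands together with the two end edges, and where Lemma~\ref{l:convex-hull} and Remark~\ref{r:triangle} (the cone of aperture $\ge\theta_i/2$ between parallel strands, which forces a point of a non-parallel strand at transverse displacement $\simeq 6^{-k+10}$ to lie axial distance $\gtrsim 6^{-k+10}/\theta_i\gg 6^{-k+10}$ from the relevant branch point) show that a point of $f(Y_i)\setminus f(Y_{i+1})$ can come within $2\cdot 6^{-k+10}$ of $f(s_k)$ with the correct transverse offset only if $Y_{i+1}$ is forked and that point lies on the strand parallel to $Y_{i+1}$. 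This again forces $Z$ to be the parallel partner of $Y_{i+1}$.

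The step I expect to be the main obstacle is the flatness bookkeeping shared by both cases: making precise how far each $\widetilde{H_{i+1}}$ can deviate from its chord — this is where Lemma~\ref{l:convex-hull}, Remark~\ref{r:triangle}, and the smallness and summability of the angles $\theta_j$ all enter — and then keeping careful track of which of the competing scales $6^{-k}$ (the radius of $B$), $6^{-k+10}$ (the offset), $6^{-(i+1)}$ (the level-$(i+1)$ size), and $\theta_i 6^{-i}$ (the slab width) dominates, since $6^{-k+10}$ can be much smaller than, comparable to, or a bounded multiple larger than $6^{-(i+1)}$ depending on $k-i$, and the argument must branch accordingly. Everything else follows from the estimates already established in this section.
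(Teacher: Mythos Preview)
Your approach is the paper's: a two-case split that invokes \eqref{e:p-ball} when enough of the $Y_j$ are forked, and falls back on a direct planar convex-hull argument via Lemma~\ref{l:convex-hull} otherwise. The paper splits on the angle $\theta$ between the chords of $\tilde Y_i$ and $\tilde Y_k$ rather than on $|S|$, but since $100\theta_1<1/100$ forces $|S|<100\Rightarrow\theta<1/100$ (equivalently $\theta>1/100\Rightarrow|S|\ge100$), the two dichotomies cover the same ground and the paper's Case~2 is exactly your first case.

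One justification needs repair. In your $|S|\ge100$ case you claim the $P$-projections of series copies stay outside the buffered strip because ``the flatness error is of lower order than the buffer $10\cdot 6^{-k}$.'' That comparison is false: the transverse thickness of a $\widetilde{H_{i+1}}$ is of order $\theta_{i+1}6^{-(i+1)}$, which can be vastly larger than $6^{-k}$. The correct reason is structural and scale-free: each vertex of the hexagon $\operatorname{conv}\widetilde{H_{i+1,1}}$ lies on an edge making angle at most $\theta_{i+1}$ with that copy's chord, which itself makes angle at most $\theta_i$ with the $Y_i$-spine; since $\theta_{i+1}<\theta_i$, every such vertex has $P$-value weakly between the $P$-values of the two terminals. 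Thus there is \emph{no} overshoot, the $P$-projection of a series copy is exactly the closed interval between its terminal projections, and disjointness from the open strip $(P(\tilde s_{i+1}),P(\tilde t_{i+1}))$ follows immediately. The paper asserts this separation without comment; your written reason should be replaced by this observation. Your second case matches the paper's ``elementary geometric argument''; the sub-case $6^{-k+10}\gtrsim\theta_i6^{-i}$ is simply vacuous under the standing hypothesis that $B$ meets $f(Y_i)$, so you can drop it.
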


\begin{proof}
Since $B \cap f(Y_i) \neq \emptyset$,
the ball $B$ intersects some $f(H_{i+1})$ in $f(Y_i)$.
We denote this $H_{i+1}$ as $Y_{i+1}'$.
  Let $\theta$ denote the angle between the line segments connecting the terminals of $\tilde{Y}_i$ and $\tilde{Y}_k$.  First suppose that $\theta \leq 1/100$.  Then an elementary geometric argument in $\R^2$ using Lemma \ref{l:convex-hull} shows that $\pi(B)$ cannot intersect any $\tilde{H}_{i+1}$ where $H_{i+1}$ is in series with $Y_{i+1}$.  
  Thus $Y_{i+1}'$ must be in parallel with $Y_{i+1}$, and this proves the lemma in this case.

  Now suppose $\theta > 1/100$.  Then as $100\theta_1 \leq 1/100$, we must have that at least 100 of $\{Y_j\}_{j=i+2}^k$ are forked.  
  %\textcolor{red}{More detail?}
  The lemma then follows from the previous lemma.
  Indeed, $P(\pi(B)) \cap P(\tilde{Y}_{i+1}') \neq \emptyset$, and so $Y_{i+1}'$ cannot be in series with $Y_{i+1}$.
\end{proof}

Recall that $C_1$ is the 1-cone defined in \eqref{conedfn}.

\begin{lem} \label{l:cone}
  $6B \subseteq f(s_k) \cdot C_1$.
\end{lem}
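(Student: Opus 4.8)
The plan is to reduce the assertion to an explicit estimate, exploiting the left-invariance of the distance on $\H$. First I would left-translate by $f(s_k)^{-1}$, so that we may assume $f(s_k)$ is the identity of $\H$; writing $R=6^{-k}$ and letting $(u,v)\in\S^1$ be the direction used in the definition of $x'$, the center becomes $x'=(6^{10}Ru,\,6^{10}Rv,\,0)$ and the claim turns into the statement that $6B=B(x',6R)$ is contained in $C_1=\{(x,y,z):|z|\le x^2+y^2\}$. The point to isolate is that $x'$ lies in the plane $\{z=0\}$ at Euclidean distance $6^{10}R$ from the origin along a \emph{fixed} horizontal direction, while the radius of $6B$ is only $6R$, smaller by the huge factor $6^{9}$; thus every point of $6B$ has horizontal part of size comparable to $6^{10}R$, whereas --- and this is the content of the computation --- its vertical coordinate is forced down to size $O(6^{10}R^2)$, which is negligible against $(6^{10}R)^2$.

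Carrying this out, fix an arbitrary $q=(q_1,q_2,q_3)\in 6B$ and write $q_1=6^{10}Ru+a$, $q_2=6^{10}Rv+b$. From $d_{\bH}(q,x')\le 6R$ and the explicit formula for $d_{\bH}$ one reads off $|a|+|b|\le 6R$ and
\[
\left|\, q_3+\tfrac12\bigl(q_1\cdot 6^{10}Rv-6^{10}Ru\cdot q_2\bigr)\right|\le (6R)^2 .
\]
The algebraic cancellation that makes everything work is
\[
q_1v-uq_2=(6^{10}Ru+a)v-u(6^{10}Rv+b)=av-bu ,
\]
so the large, mutually parallel parts of $q_1$ and $q_2$ drop out of the symplectic pairing; hence the twist term has magnitude at most $\tfrac12\,6^{10}R(|a|+|b|)\le 3\cdot 6^{10}R^2$, and therefore $|q_3|\le (6R)^2+3\cdot 6^{10}R^2\le 4\cdot 6^{10}R^2$. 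On the other hand $|\pi(x')|=6^{10}R$ while $|\pi(q)-\pi(x')|\le|a|+|b|\le 6R$, so
\[
q_1^2+q_2^2=|\pi(q)|^2\ge (6^{10}R-6R)^2\ge\tfrac12\,6^{20}R^2 .
\]
Since $4\cdot 6^{10}\le\tfrac12\,6^{20}$, we get $|q_3|\le q_1^2+q_2^2$, i.e. $q\in C_1$. As $q\in 6B$ was arbitrary, $6B\subseteq C_1$, and undoing the translation yields $6B\subseteq f(s_k)\cdot C_1$.

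There is no real obstacle here: the argument is essentially a numerology check, and it succeeds precisely because the offset $6^{-k+10}$ of $x'$ from $f(s_k)$ dwarfs the radius $6^{-k+1}$ of $6B$. The only two points that merit a moment's care are, first, the left-invariance of whichever of $d_{\bH}$ or $d_K$ is used to normalize $B$ (so that the translation to the identity is legitimate --- if $B$ is a $d_K$-ball the constants change only by the fixed comparison factor), and second, the cancellation $q_1v-uq_2=av-bu$, which is indispensable: without it the cross-term would be of order $6^{20}R^2$, comparable to $q_1^2+q_2^2$, and the containment in the $1$-cone would fail. It survives only because the dominant part of $\pi(q)$ points in the \emph{same} direction $(u,v)$ as $\pi(x')$, so that the symplectic form vanishes to leading order.
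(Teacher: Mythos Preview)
Your proof is correct and follows essentially the same approach as the paper's own argument: translate $f(s_k)$ to the origin, then verify by direct computation that every point of $B(x',6^{-k+1})$ has $z$-coordinate of order at most $6^{10}R^2$ while its horizontal part has norm squared of order $6^{20}R^2$. The paper parametrizes points of $6B$ as $x'\cdot p$ with $p\in B(0,6^{-k+1})$ and reads off the coordinates of the product, whereas you work directly with $d_{\bH}(q,x')$ and isolate the cancellation $q_1v-uq_2=av-bu$; these are two ways of writing the same calculation, and your version is if anything more explicit about why the twist term stays small.
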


\begin{proof}
  We may suppose $f(s_k) = (0,0,0)$.  Let $p \in B(0,6^{-k+1})$.  Then $(u6^{-k+10},v6^{-k+10},0) \cdot p$ has a $z$-coordinate of absolute value at most $6^{-2k+2} + 6^{-k+10} \cdot 6^{-k+1} \leq 6^{-2k+12}$.  On the other hand, the $x$ and $y$ coordinates have absolute value at least $6^{-k+9}$.  This easily proves the lemma (with drastic overestimation).
\end{proof}

We now show Proposition \ref{p:porous}.

\begin{proof}[{Proof of Proposition \ref{p:porous}}]
Suppose $B$ intersects $f(Y_i) \backslash f(Y_{i+1})$.  Let $Y_{i+1}'$ denote the $H_{i+1}$ that is in parallel with $Y_{i+1}$ such that $B \cap f(Y_{i+1}')$ is nonempty (from Lemma~\ref{l:fork}).  Let $p \in B \cap f(Y_i')$. In a similar way, for $j \in \{i+2,...,k\}$, let $Y_j'$ denote the $H_j$ that contain $p$ with $Y_j' \subseteq Y_{j-1}'$ and $s_k',t_k',\tilde{s}_k',\tilde{t}_k'$ be defined similarly.  Note that the inflated ball $6B$ must intersect $\tilde{s}_k'$.  Let $S'$ be the indices $j$ of $\{i+2,...,k\}$ where $Y_j'$ is forked in $Y_{j-1}'$.  By Proposition \ref{p:flat}, we have that both $\sum_{j \in S} \theta_j$ and $\sum_{j \in S'} \theta_j$ are less than 1/100.

Take a geodesic path from $s_k$ to $s_k'$.  
This is pushed into $\H$ via $f$ to a piecewise affine horizontal path $p_1p_2\cdots p_n$ so that $p_1 = f(s_k)$ and $p_n = f(s_k')$.  
As $\max\{\sum_{j \in S} \theta_j,\sum_{j \in S'} \theta_j\} \leq 1/100$ and the line from $\pi(p_1)$ to $\pi(p_n)$ is almost orthogonal to the line between the terminals of $\tilde{Y}_i$, %\textcolor{red}{(More detail?)}
the closed piecewise affine path $\pi(p_1)\cdots \pi(p_n) \pi(p_1)$ in $\R^2$ does not self intersect.  It follows from Remark \ref{r:triangle} that this path encloses an isosceles triangle of angle $\theta_i - \frac{\theta_{i+1}}{2}$ and base at least $\frac{|\pi(p_n) - \pi(p_1)|}{2}$ which has area
\begin{align*}
  \frac{1}{2} \frac{|\pi(p_1) - \pi(p_n)|}{2} \frac{|\pi(p_1) - \pi(p_n)|}{2 \tan\left(\theta_i - \frac{\theta_{i+1}}{2} \right)} \geq \frac{1}{16\theta_i} |\pi(p_1) - \pi(p_n)|^2.
\end{align*}
Thus, we see that $p_n \notin p_1 \cdot C_{1/16\theta_{i-1}}$.  As $\theta_i$ is decreasing, this means $p_n \notin p_1\cdot C_{1/16\theta_1}$.  However, from Lemma \ref{l:cone} we have that $p_n \in p_1 \cdot C_1$.  This is a contradiction, which means $B$ does not intersect $f(Y_i) \backslash f(Y_{i+1})$.
This proves that $X = f(G_\infty)$ is porous in $\H$.
\end{proof}

\bibliographystyle{alpha}

\bibliography{Heisenberg}
\end{document}